\providecommand{\U}[1]{\protect\rule{.1in}{.1in}}
\newtheorem{theorem}{Theorem}[section]
\newtheorem{assumption}{Assumption}[section]
\newtheorem{definition}{Definition}[section]
\newtheorem{lemma}{Lemma}[section]
\newtheorem{proposition}{Proposition}[section]
\newtheorem{corollary}{Corollary}[section]
\newtheorem{remark}{Remark}[section]
\newtheorem{example}{Example}[section]
\newenvironment{proof}[1][Proof]{\noindent\textbf{#1.} }{
\hfill \rule{0.5em}{0.5em}}
\numberwithin{equation}{section}
\DeclareMathOperator*{\argmax}{arg\,max}
\DeclareMathOperator*{\argmin}{arg\,min}
\DeclareMathOperator*{\dif}{\mathrm{d\!}}
\title{Cyber Risk Management and Mitigation via Controlled Stochastic SIS Dynamics: An Optimal Control Approach}
\author[1]{Shize Na}
\author[2]{Zhuo Jin}
\author[1]{Ran Xu\thanks{Corresponding author.\\
E-mail address: \\Shize.Na18@student.xjtlu.edu.cn (S. Na),\\ zhuo.jin@mq.edu.au
(Z. Jin),\\ Ran.Xu@xjtlu.edu.cn (R. Xu),\\ Hailiang.Yang@xjtlu.edu.cn (H. Yang).
}
}
\author[1]{Hailiang Yang}
\affil[1]{Department of Financial and Actuarial Mathematics, Xi'an Jiaotong-Liverpool University, Suzhou, P.R. China, 215123.}
\affil[2]{Department of Actuarial Studies and Business Analytics, Macquarie University, NSW 2113, Australia}
\date{}
\begin{document}

\maketitle
\vspace{-.25in}
\begin{abstract}
In this paper, we formulate cyber risk management and mitigation as a stochastic optimal control 
problem under a stochastic Susceptible-Infected-Susceptible (SIS) epidemic model. To capture the 
dynamics and interplay of management and mitigation strategies, we introduce two stochastic controls: (i) a proactive risk management control to reduce external cyber attacks and internal contagion effects, and (ii) a reactive mitigation control to accelerate system recovery from cyber infection. 
The interplay between these controls is modeled by 
minimizing the expected discounted running costs, which balance proactive management expenses against 
reactive mitigation expenditures. We derive the associated Hamilton-Jacobi-Bellman (HJB) equation and 
characterize the value function as its unique viscosity solution. For numerical solutions, we 
propose a Policy Improvement Algorithm (PIA) and prove its convergence via Backward Stochastic 
Differential Equations (BSDEs). Finally, we present a comprehensive numerical analysis through a benchmark example, suboptimal control analysis, sensitivity analysis, and comparative statics.
\end{abstract}

\textbf{Keywords:} Cyber risk modeling, stochastic SIS model, stochastic control,  policy improvement algorithm, viscosity solution

\onehalfspacing
\section{Introduction}
Cybersecurity has emerged as a critical concern in the industrial, financial, insurance, and 
governmental sectors due to the increasing digitization and interconnectedness of various cyber systems.  Currently,  there is no unanimous definition of cyber risk. Some widely accepted definitions include but not limit to: \textit{Cyber risks are operational risks that may result in potential violation of confidentiality, availability, or integrity of information systems}  \citep{cebula2010taxonomy}; \textit{it is a financial risk associated
with network and computer incidents and leading to the failure of information systems} \citep{bohme2006models,bohme2010modeling}.

In academia, the study of cyber risk has attracted the attention of researchers across 
many fields. For example, researchers in computer science have been aware of the 
importance of security in cyberspace, and have made many contributions concerning cyber risk detection \citep{moore2006inferring,garcia2009anomaly},
security breach prediction \citep{zhan2015predicting,bakdash2018malware}, and computer 
system enhancement \citep{jang2014survey}. 
In the field of business and corporate finance, 
the studies focus on investigating cyber risk under the framework of enterprise risk 
management \citep{stoneburner2002risk,gordon2003framework,ougut2011cyber,pate2018cyber}. In addition, in insurance and actuarial science, researchers are more interested in modeling the cyber risk in terms
of its frequency, severity, and dependence with a range of statistical and stochastic techniques, for example,  \cite{herath2011copula,mukhopadhyay2013cyber,eling2017data,eling2018copula,xu2019cybersecurity,dou2020insurance,malavasi2022cyber} and references therein. For a recent comprehensive cross-disciplinary review on modeling and management of cyber risk can refer to \cite{he2024modeling}.

Traditional cyber risk models (especially in the community of insurance and actuarial science) are adept at capturing statistical properties and temporal 
trends of losses, yet they often fail to account for the propagation of cyber threats. By 
studying how cyber risks spread in a closed cyber system, one can identify critical factors that 
determine the scale of contagion and the magnitude of the economic losses. This deeper 
understanding enhances the design of risk management and mitigation strategies. It is 
noted that the parallels between cyber risk and disease spread make epidemiology a natural 
framework for a similar analysis in cyber risk, given that both phenomena involve 
contagion, interdependent exposure, and dynamic evolution over time. 

Inspired by methods from genetic epidemiology, \cite{gil2014genetic} introduced a statistical framework to assess the susceptibility of individual nodes in a network. Their 
approach treats the services running on a host as the defining risk factor for cyber 
threat exposure, drawing an analogy to genetic penetrance models. This conceptual 
borrowing allows for a structured analysis of how certain network configurations increase 
vulnerability. In addition, \cite{liu2016activetrust} developed an innovative 
compartmental model for malware propagation by adapting epidemiological concepts to 
cybersecurity, where the computers are recognized as heterogeneous nodes with different protection levels in the network. The model categorizes nodes into three distinct states: 
weakly protected susceptible (W-nodes), strongly protected susceptible (S-nodes), and 
infected (I-nodes). The authors discussed the malware-free equilibrium when the ``basic 
reproduction number" (an important metric in epidemiological models) $R_0<1$ and $R_0\ge 1$ respectively. These findings align with earlier work by \cite{mishra2014dynamic}, who 
employed a susceptible-exposed-infectious-susceptible-with-vaccination model to analyze worm propagation. More recently, \cite{fahrenwaldt2018pricing} models the spread of cyber infections with an interacting Markov chain and claims with a marked point process, where the spread process is of a pure Poisson jump process, whereas transitions are described by
the susceptible-infected-susceptible (SIS) epidemic model. And, the dependence among 
different nodes (i.e., firms, computers, or devices) is modeled by an undirected network. 
Through a simulation study, the authors demonstrate that network topology plays a crucial role in determining insurance prices and designing effective risk management strategies.
\cite{xu2019cybersecurity} employs both Markov and non-Markov processes within an SIS 
network model. Their Markov formulation incorporates dual infection pathways, where 
Poisson processes are used to capture both internal network transmission and external threats. This 
formulation yields valuable dynamic upper bounds for infection probabilities and 
stationary probability estimates. The empirical validation in the study reveals the critical influence of recovery rates on insurance premium calculations. Later, \cite{antonio2021pricing} extends the Markov-based SIS model by incorporating network 
clustering coefficients, such that the model can capture the local network clustering that 
inhibits epidemic spread through modified transition probabilities. Using N-intertwined 
mean-field approximation, they derived dynamic infection probability bounds that improve 
premium accuracy when applied to both synthetic and real-world networks. \cite{hillairet2021propagation} describes the spread of a cyber attack at a global level with a susceptible-infected-recovered (SIR) model, and approximates the cumulative number of individuals in each group with a Gaussian process. Furthermore,
\cite{hillairet2022cyber} proposes a multi-group epidemic model to assess the impact of large-scale cyber attacks on insurance portfolios, which captures the interdependencies between actors and can be calibrated with limited data, enabling efficient scenario analysis of cyber events.
For other relevant studies in recent literature, the readers may refer to \cite{he2024modeling} and references therein.

Recent studies have significantly advanced the modeling of cyber risk propagation through 
epidemiological approaches, in particular, using SIS and its extended
models with network-dependent interactions. While these models effectively capture 
contagion dynamics, dependence structures, and loss distributions, they 
remain descriptive rather than prescriptive. To be specific, many studies focus on predicting the spread of contagions and estimating losses, but fail to incorporate active 
intervention or control strategies to manage or mitigate the cyber risk in real-time. Hence, a critical limitation in existing cyber risk modeling with epidemic models is the absence of formal 
control mechanisms to identify optimal risk management and mitigation strategies.
It is noted that 
this gap mirrors the recent developments in biological epidemic control literature, where the later 
works incorporate various optimal stochastic control (such as vaccination) into (non)stochastic SI(R)S models, see, for example, \cite{barnett2023epidemic, federico2024optimal,chen2025optimality,gonzalez2025tractable}.

Cyber risk poses a critical threat to interconnected cyber systems across various industrial, financial, insurance and governmental sectors, yet existing literature lacks rigorous frameworks for dynamic risk management under stochastic contagion. To bridge this gap, we formalize the cyber risk management and 
mitigation problem as a stochastic optimal control problem under a controlled Susceptible-Infected-Susceptible (SIS) model. The objective of this paper is to establish a theoretical foundation for optimizing management and mitigation controls  in cyber defense.  The contributions of our research are summarized as follows:
\begin{itemize}
\item Cyber risk has emerged as a critical threat to modern enterprises, governments, and financial systems, with attacks demonstrating potential cross-sector contagion. While recent literature employs stochastic epidemic models to study cyber risk contagion, no prior work has investigated optimal risk management and mitigation strategies under such a framework. We bridge the gap  by formulating cyber risk contagion dynamics via a controlled stochastic SIS model in a closed cyber system. Two 
distinct control variables are considered: one is a \textit{proactive} control (e.g., firewall upgrades, isolating infected systems) to prevent external attacks and internal contagion.
The other is \textit{reactive} control (e.g., malware removal, credential resets) to mitigate damage post-attack.
This unified framework rigorously captures the trade-off between prevention and mitigation, addressing a key practical challenge under resource constraints in cyber defense.  It is worth emphasizing that,  unlike those studies in epidemic control literature (see, e.g., \cite{barnett2023epidemic}), which typically incorporate a single control variable (e.g., vaccination), our work introduces a dual-control structure into the stochastic SIS framework. This represents a non-trivial extension of the controlled SIS model itself.
\item We characterize the value function (the expected discounted running costs) as the unique viscosity solution to the Hamilton-Jacobi-Bellman (HJB) equation derived from our stochastic control formulation for cyber risk management and mitigation. By applying viscosity theory, we establish the existence and uniqueness of the solution of the HJB equation under mild regularity assumptions. This analytical framework ensures numerical tractability, as the uniqueness property guarantees well-posedness for iterative computational methods via the dynamic programming principle. Moreover, our approach naturally accommodates extensions to more complex settings, including jump-diffusion processes and generalized cost structures, broadening its applicability beyond the current setup.
\item For numerical solutions of such an infinite-time horizon stochastic cyber risk control problem, we develop a Policy Improvement Algorithm (PIA) based on Bellman-Howard policy iteration. The algorithm demonstrates superior computational efficiency compared to other alternative numerical methods, such as Markov chain approximation with value iterations. We rigorously establish the algorithm’s convergence (at an exponential rate) and stability under error perturbations during iteration by mapping the value function at each step to the unique solution of some (iteratively defined) infinite-horizon 
Backward Stochastic Differential Equations (BSDEs). Given the inherent challenge of our underlying stochastic control problem (i.e., an infinite-time horizon problem with unknown boundary conditions), these results constitute a nontrivial extension of the finite-horizon frameworks, such as that of \cite{kerimkulov2020exponential}.  We further emphasize that our algorithm and the BSDE-based convergence and stability analysis can be further applied to those stochastic control problems with a random horizon or optimal stopping problems seamlessly. Finally, while the focus of this paper is on a one-dimensional diffusion process (specifically, the stochastic SIS model), all algorithmic and theoretical results extend directly to multi-dimensional controlled diffusion processes, enabling the applications of our results to more generalized compartmental models, for example, the Susceptible-Infectious-Recovered (SIR) model, the Susceptible-Infectious-Recovered-Vaccinated (SIRV) model, etc.
\end{itemize}

The remainder of the paper is organized as follows. Section \ref{sec:2} presents our controlled stochastic SIS model featuring dual control variables for cyber risk management and mitigation. We reformulate the problem as an optimal stochastic control problem under a (drift-controlled) diffusion model, and derive fundamental properties of the value function. Section \ref{sec:3} establishes our core theoretical contribution, where we prove that the value function can be characterized as the unique viscosity solution to the associated Hamilton-Jacobi-Bellman equation. In Section \ref{sec:4}, we propose a numerical method, namely the Policy Improvement Algorithm, together with convergence results based on the theory of infinite-time horizon BSDEs. We also provide various numerical results on the optimal cyber risk management and mitigation strategies, including comprehensive sensitivity analysis and comparative statics. The conclusion is given in Section \ref{sec:5}. Finally, some technical proofs are collected in the Appendices for completeness.

\section{Stochastic SIS model and mathematical formulation}
\label{sec:2}

We start with a standard Susceptible-Infected-Susceptible(SIS) model, which can be characterized by two state variables, namely 
the number of susceptible nodes (for example, terminal computers or servers in a network) 
$S_t$, and the number of cyber-infected nodes $I_t$.  Assume the total number of 
nodes in the system at time $t$ is $N_t$. Similar to the usual step in studying the 
classical deterministic SIS model in the epidemiological literature, we normalize the 
system size to one such that we replace the susceptible and infected nodes $S_t$ and $I_t$ in the system by $s_t = S_t/N_t$ and $i_t = I_t/N_t$, respectively. 
In addition, we further introduce the stochastic version of the SIS model, see e.g. \cite{tran2021optimal,barnett2023epidemic}, and the dynamics of the state variables are expressed as 
\begin{equation}\label{eq:stochastic-SIS}
    \begin{cases}
       \dif s_t = -(\alpha +\beta i_t)s_t\dif t + \gamma i_t\dif t - \sigma s_ti_t\dif W_t,\\
        \dif i_t = (\alpha +\beta i_t)s_t\dif t - \gamma i_t\dif t + \sigma s_ti_t\dif W_t,\\
        s_0 +i_0 =1,
    \end{cases}
\end{equation}
where we introduce white noise shocks $W_t$, which is a parameter perturbation of $\beta$ 
with volatility $\sigma$. Here,  $\alpha$ is the rate at which susceptible nodes (devices/users) are infected by external cyber threats (e.g., malicious emails,denial-of-service); $\beta$ denotes the rate at which the infected nodes propagate malware to 
susceptible nodes; and $\gamma$ is a baseline (unassisted) recovery rate of the compromised 
nodes become functional and threat-free, which may be due to the baseline cybersecurity 
measures like antivirus or basic IT policies. For simplicity, we assume that all the 
abovementioned model parameters in \eqref{eq:stochastic-SIS} are constants. 

Now, we extend \eqref{eq:stochastic-SIS} to a controlled stochastic SIS model with two control variables. On one hand, we allow the above SIS model to be controlled under cyber risk management protocols
through proactive protection measures. 
Let $\eta_t$ be the fraction of nodes (both susceptible and cyber-infected) at time $t$ in the system under protection with certain measures, which can 
reduce the effectiveness of both the external cyber threats to susceptible nodes and 
propagation from cyber-infected nodes to susceptible nodes.  On the other hand, we 
further replace $\gamma$ in \eqref{eq:stochastic-SIS} by $\gamma + \rho_t$, where $\rho_t$ 
is a controlled recovery rate at time $t$, which can be interpreted as the enhanced recovery rate due to certain reactive interventions to the cyber-infected nodes. 

We assume that the pair of cyber risk management and mitigation control actions $(\eta_\cdot,\rho_\cdot)$ takes value in a nonempty set $U$ in $[0,1]\times[0,\infty)$. Then, the dynamics of the controlled stochastic SIS model can be described by the following system of stochastic differential equations(SDEs):
\begin{equation}\label{eq:control-SIS}
    \begin{cases}
      \dif s_t = -\alpha\eta_ts_t\dif t - \beta \eta_t^2i_ts_t\dif t + (\gamma+ \rho_t)i_t\dif t - \sigma s_ti_t\dif W_t,\\
        \dif i_t = \alpha\eta_ts_t\dif t +\beta \eta_t^2i_ts_t\dif t - (\gamma+\rho_t) i_t\dif t + \sigma s_ti_t\dif W_t,\\
        s_0 +i_0 =1.
    \end{cases}
\end{equation}
Note that the control $\eta$ is applied to both susceptible nodes and infected nodes, hence we have a quadratic form in each of the second term in \eqref{eq:control-SIS}.

According to a similar analysis in Theorem 2.1  of \cite{tran2021optimal}, we can formulate the above controlled SIS model into a one-dimensional controlled diffusion process. To be specific, for any initial value $(s_0,i_0)\in(0,1)^2 $ satisfying $s_0+i_0=1$ and $(\eta_t,\rho_t)\equiv (\eta_0,\rho_0) \in U $, the system of SDEs \eqref{eq:control-SIS} has a unique strong  global solution $(s_t,i_t)_{t\ge 0}$, and $s_t+i_t = 1$ almost surely for any $t\ge 0$.  Therefore,  in the following, we focus on the dynamics of the fraction of cyber-infected nodes in the system,
\begin{equation*}
\dif i_t = \Big[\alpha\eta_t(1-i_t) + \beta \eta_t^2i_t(1-i_t) - (\gamma+\rho_t)i_t\Big]\dif t+\sigma (1-i_t)i_t\dif W_t.
\end{equation*}
We shall provide a rigorous proof of the above assertion in Proposition \ref{prop:2.1} below. To proceed with our analysis under simplified notations, we reformulate the above cyber risk management and mitigation problem as a stochastic control problem for a general diffusion process given in \eqref{eq:SDE-X} below.

Let $(\Omega,\mathcal{F},\mathbb{F}=\{\mathcal{F}\}_{t\ge0},\mathbb{P})$ be a filtered probability space satisfying the usual condition, and on which a one--dimensional Brownian motion $W = (W_t)_{t\ge0}$ is defined. Let  $X=(X_t)_{t\ge 0}$ denote the fraction of cyber-infected nodes in the system at time $t$, which is driven by the following controlled diffusion process:
\begin{equation}\label{eq:SDE-X}
    \dif X_t = b(X_t, \eta_t, \rho_t) \dif t + \sigma(X_t) \dif W_t, \qquad  X_0 = x\in(0,1),
\end{equation}
where $ b(x, \eta, \rho)= \eta \alpha (1 - x) + x[\eta^2 \beta(1 - x) - (\gamma + \rho)]$, $\sigma(x)= \sigma x(1 - x)$ (with abuse of notation, we use the volatility 
parameter $\sigma$ in \eqref{eq:control-SIS} as the function $\sigma(\cdot)$ here in \eqref{eq:SDE-X} for notation simplicity), and $\alpha, \beta, \gamma$ and $\sigma$ are 
model parameters in the stochastic SIS system \eqref{eq:control-SIS}. The control processes $(\eta_{.}, \rho_{.}): \mathbb{R}_+\times\Omega \to U$ are progressively 
measurable, where $U$ is the control action space, we may just set  $U=[0,1]\times[0,\infty)$, $\mathbb{R}_+$ denotes the set $[0,\infty)$.  Furthermore, throughout the paper, we let $\mathbb{P}_x$ and $\mathbb{E}_x$ denote the probability measure and expectation operator when $X_0 = x$, respectively.

\begin{lemma}\label{lemma:a1}
    The mappings $x\mapsto b(x, \eta, \rho)$ and $x\mapsto \sigma(x)$ are continuous in $x$, and the former is uniformly in the control $(\eta, \rho)\in U$. There exists a constant $K_1 > 0$ such that for any $(\eta,\rho)\in U$, and all $x,y\in(0,1)$ we have 
    \begin{equation}\label{eq:lipchtiz-coef}
        |b(x, \eta, \rho) -  b(y, \eta, \rho)| + |\sigma(x) - \sigma(y)| \leq K_1|x-y|.
     \end{equation}
    Moreover, for all $(\eta, \rho)\in U$ and $x\in (0, 1)$, it also holds that
    \begin{equation*}
        |b(x, \eta, \rho)| \leq K_2(1 + |x| + |\eta| + |\rho|), 
    \end{equation*}
    and 
    \begin{equation*}
        |\sigma(x)| \leq K_2(1 + |x|),
    \end{equation*}
    for some $K_2>0$.
\end{lemma}

\begin{proof}
   The continuity and uniform continuity in control are obvious, we just need to prove the result inequalities. Consider any $x,y\in(0,1)$, one has
 \begin{align*}
        |b(x, \eta, \rho) -  b(y, \eta, \rho)| &= \left|\left(\eta^2 \beta - \eta\alpha - (\gamma + \rho)\right)(x - y)   - \eta^2 \beta (x^2 - y^2)\right|\\
        &\le \left|\left(\eta^2 \beta - \eta\alpha - (\gamma + \rho)\right)\right| |x-y| + \eta^2\beta(x+y)|x-y|\\
        &\leq \Big(\left|\left(\eta^2 \beta - \eta\alpha - (\gamma + \rho)\right)\right| + 2\eta^2\beta\Big) |x - y|, 
    \end{align*}
and
\[
|\sigma( x) - \sigma(y)| = |\sigma(x - y) - \sigma(x^2 - y^2)|\leq \sigma\Big(1 + (x+y)\Big)|x-y|\le 3\sigma|x - y|.
\]
Hence, by letting $K_1 = \max \left\{\Big(\left|\left(\eta^2 \beta - \eta\alpha - (\gamma + \rho)\right)\right| + 2\eta^2\beta\Big),  3\sigma\right\}$, one arrives at \eqref{eq:lipchtiz-coef}. Moreover, take any $x\in (0, 1)$ and $(\eta, \rho)\in U$, we have
\begin{equation*}
    \begin{split}
        |b(x, \eta, \rho)| & =  \left|\eta \alpha (1 - x) + x\left[\eta^2 \beta(1 - x) - (\gamma + \rho)\right] \right| \\
        &\leq \left|\eta [\alpha (1 - x) + x\beta(1 - x)]\right| + \left|x(\gamma + \rho)\right|\\
        &\leq (\alpha + \beta)\cdot|\eta| + \gamma |x| + |\rho|\\
    \end{split}
\end{equation*}
and 
$$|\sigma(x)|= |\sigma x(1 - x)| \leq \sigma|x|.$$
Then, by letting $K_2 = \max\{(\alpha+\beta), \gamma, 1, \sigma \}$, we complete the proof.
\end{proof}

Now, we are ready to define the cost functional associated with our cyber risk management and mitigation problem \eqref{eq:SDE-X} as
\begin{equation}\label{eq:performfunc}
    J(x;\eta,\rho) := \mathbb{E}_x\left[\int_{0}^{\infty} e^{-\delta t}f(X_t,\eta_t,\rho_t)\dif t \right], \quad x\in (0,1),(\eta,\rho)\in U,
\end{equation}
where $\delta>0$ is the discounting factor, $f: (0,1)\times [0,1]\times [0,\infty) \to [0,\infty)$ is a running cost function. 
In addition, without loss of generality, we restrict our study to the set of admissible controls given below. Let $\mathcal{U}$ denotes all progressively measurable random processes $(u, \nu) = \{(u_t,\nu_t), t \geq 0\}$ taking values in $U$, and define the set of admissible control processes by
$$\mathcal U_0:=\left\{(u, \nu)\in \mathcal U: |u_s| + |\nu_s|\leq M_{u, \nu} <\infty \; \text{a.s. uniformly in}\; s\in\mathbb{R}_+\right\}. $$
Then, the value function is defined as 
\begin{equation}\label{problem}
    V(x) := \inf_{(\eta, \rho)\in \mathcal{U}_0} J(x; \eta, \rho),\qquad x\in(0,1).
\end{equation}
To show the well-posedness of the controlled diffusion process \eqref{eq:SDE-X}, we prove the following proposition. 

\begin{proposition} \label{prop:2.1} 
For any control $(\eta, \rho)\in \mathcal{U}_0$ and initial value $X_0=x\in(0,1)$, the controlled SDE \eqref{eq:SDE-X} has a unique global positive solution $X_t$ for all $t\in \mathbb R_+$  such that
\[
\mathbb P_x(X_t \in (0, 1), \forall t\in \mathbb R_+)= 1.
\]
\end{proposition}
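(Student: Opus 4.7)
The plan is to combine local existence and uniqueness from classical SDE theory with a Lyapunov argument ruling out boundary escape. By Assumption \ref{a1} together with the a.s.\ bound $|\eta_s| + |\rho_s| \leq M_{u,\nu}$ built into $\mathcal{U}_0$, the coefficients $b(\cdot, \eta_t, \rho_t)$ and $\sigma(\cdot)$ are (pathwise) uniformly locally Lipschitz on $(0, 1)$. Smoothly truncating them outside $[1/n, 1-1/n]$ and invoking a standard strong existence/uniqueness result (e.g.\ Ikeda--Watanabe, Theorem IV.3.1) yields a unique strong solution up to the exit time $\tau_n := \inf\{t \geq 0 : X_t \notin (1/n, 1 - 1/n)\}$. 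Setting $\tau_\infty := \lim_{n \to \infty} \tau_n$ then gives a unique strong solution on $[0, \tau_\infty)$; it remains to show that $\tau_\infty = \infty$ almost surely.

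For the Lyapunov step I would use $V(x) := -\ln x - \ln(1 - x)$, which is smooth on $(0, 1)$ and blows up at both endpoints. A direct computation gives $V'(x) = (2x - 1)/(x(1-x))$ and $V''(x) = 1/x^2 + 1/(1-x)^2$. Two simplifications occur: the stochastic integrand reduces to $V'(x)\sigma(x) = \sigma(2x - 1)$, which is bounded by $\sigma$, so the stopped stochastic integral is a true martingale; and the diffusion correction collapses to $\tfrac{1}{2} V''(x) \sigma^2(x) = \tfrac{\sigma^2}{2}[x^2 + (1-x)^2] \leq \sigma^2/2$.

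The main obstacle is to control the drift $V'(X_t) b(X_t, \eta_t, \rho_t)$, which \emph{a priori} carries $1/X_t$ and $1/(1 - X_t)$ singularities. Expanding,
\[
V'(x) b(x, \eta, \rho) = \eta\alpha\left(2 - \tfrac{1}{x}\right) + \eta^2 \beta (2x - 1) + (\gamma + \rho)\,\tfrac{1 - 2x}{1 - x}.
\]
The key observation is that the singular pieces carry favorable signs: for $x \leq 1/2$ the factor $2 - 1/x$ is non-positive, while for $x > 1/2$ the factor $(1-2x)/(1-x)$ is non-positive, so each of the two singularities actually contributes an \emph{upper} bound rather than blow-up. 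A short case split at $x = 1/2$, together with $(\eta_t, \rho_t) \in [0,1]\times[0,M_{u,\nu}]$ and $|2x-1|\leq 1$, therefore produces a deterministic constant $C = C(\alpha, \beta, \gamma, \sigma, M_{u,\nu})$ with
\[
V'(X_t) b(X_t, \eta_t, \rho_t) + \tfrac{1}{2} V''(X_t) \sigma^2(X_t) \leq C \qquad \text{a.s. for all } t \geq 0.
\]

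With the drift controlled, It\^o's formula stopped at $\tau_n$ followed by taking expectations gives $\mathbb{E}_x[V(X_{t \wedge \tau_n})] \leq V(x) + C t$. Since $V(1/n) \wedge V(1 - 1/n) \geq \ln n$, Markov's inequality yields $\mathbb{P}_x(\tau_n \leq t) \leq (V(x) + C t)/\ln n \to 0$ as $n \to \infty$, so $\tau_n \uparrow \infty$ almost surely and hence $\tau_\infty = \infty$ a.s. This delivers both global existence on $\mathbb{R}_+$ and the confinement $X_t \in (0, 1)$ for all $t$; pathwise uniqueness is inherited from the uniqueness on each $[0, \tau_n]$.
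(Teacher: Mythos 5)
Your proof is correct, and it takes a genuinely different route from the paper's. The paper works with the reciprocal Lyapunov function $v(z) = \tfrac{1}{z} + \tfrac{1}{1-z}$ (following the reference it cites for the uncontrolled stochastic SIS model), for which the generator bound only has the form $\mathcal{L}^{\eta,\rho} v \le C(\eta,\rho)\, v$ with a control-dependent, hence random, factor; this forces the paper to introduce a bounded modification of $C(\eta_s,\rho_s)$, justify a Tonelli interchange of expectation and time integration, close the estimate with Gr\"onwall's inequality, and finish by contradiction using $v(X_{\tau_n}) = n$. Your logarithmic choice $V(x) = -\ln x - \ln(1-x)$ buys a strictly stronger pointwise statement: after the case split at $x = 1/2$, the full generator is bounded above by a \emph{deterministic} constant $C(\alpha,\beta,\gamma,\sigma,M_{u,\nu})$ uniformly over admissible controls, so $\mathbb{E}_x[V(X_{t\wedge\tau_n})] \le V(x) + Ct$ follows in one line and the Gr\"onwall step, the measurable-modification/Tonelli bookkeeping, and the contradiction set-up all become unnecessary; the conclusion drops out of Markov's inequality together with $V \ge \ln n$ on the boundary of $(1/n, 1-1/n)$. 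The sign analysis of the two singular terms is the one load-bearing step and you have it right: for $x\le 1/2$ the term $\eta\alpha(2-1/x)$ is nonpositive while $1/(1-x)\le 2$ keeps the other singular term bounded, and symmetrically for $x>1/2$; together with $\eta\in[0,1]$, $|2x-1|\le 1$, and the a.s.\ bound $\rho_t\le M_{u,\nu}$ from $\mathcal{U}_0$, this yields the required constant. Both arguments deliver the same conclusion; yours is shorter and avoids the measure-theoretic overhead attached to the random Gr\"onwall coefficient.
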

\begin{proof}
Let $\tau$ be the explosion time of the $X_t$ driven by \eqref{eq:SDE-X}. Since the drift and diffusion coefficients are locally Lipschitz, there exists a unique local solution $X_t$ for $t\in[0,\tau)$. For any initial value $X_0=x\in(0,1)$, consider a sufficiently large $N>0$ such that  $x\in(\frac{1}{N},1-\frac{1}{N})$. Then, for each $n\ge N$, define the first exit time 
\[
\tau_n := \inf\Big\{t\in[0,\tau): X_t\notin \left(\frac{1}{n}, 1- \frac{1}{n}\right)\Big\},
\]
with the convention that $\inf\{\emptyset\}=\infty$. Obviously, $\tau_n$ is increasing in $n$, hence we let $\tau_\infty = \lim_{n\to \infty}\tau_n$, and $\tau_\infty\le \tau$ almost surely. Then, the rest is to show $\tau_\infty = \infty$ almost surely. We prove the statement by the method of contradiction.  Assume that there exists $T>0$ and $\epsilon\in(0,1)$ such that
\[
\mathbb{P}_x(\tau_\infty \le T)>\epsilon.
\]
Then, there is a sufficiently large $\tilde{n}\ge N$ such that $\mathbb{P}_x(\tau_n\le T)\ge \epsilon$ for all $n\ge \tilde{n}$. We define an auxiliary function $v(z):= \frac{1}{z}+ \frac{1}{1-z}$ for $z\in(0,1)$, then for any $t\in[0,T]$ and $ n\ge \tilde{n}$, an application of the Dynkin's formula gives,
\[
\mathbb{E}_x(v(X_{t\wedge \tau_n})) = v(x) + \mathbb{E}_x\left[\int_{0}^{t\wedge\tau_n}\mathcal{L}^{\eta,\rho}v(X_s)\dif s\right],
\]
where $\mathcal{L}^{\eta,\rho}$ is the infinitesimal generator of $X$  for any fixed $\eta$ and $ \rho$ given as
\begin{align*}
  \mathcal{L}^{\eta,\rho}v(x) =& \eta\alpha\left(-\frac{1}{x^2} + \frac{1}{(1 - x)^2}\right)(1 - x) + x\left(-\frac{1}{x^2} + \frac{1}{(1 - x)^2}\right)[\eta^2\beta(1 - x) - (\gamma + \rho)]  \\
  &\quad  +\sigma^2x^2(1 - x)^2\left(\frac{1}{x^3} + \frac{1}{(1 - x)^3}\right).
\end{align*}
With some simple algebra, one has
\[
 \mathcal{L}^{\eta,\rho}v(x)\le \frac{\eta\alpha + \eta^2\beta}{1 - x} + \frac{\gamma + \rho}{x} + \sigma^2 \left(\frac{1}{x} + \frac{1}{1 - x}\right) \leq C(\eta,\rho) v(x), \quad \text{for } x\in(0,1),
\]
where $C(\eta,\rho) = \max\{\eta\alpha + \eta^2\beta,\gamma + \rho\} + \sigma^2$. Take a finite modification of random process $C(\eta_{s\wedge\tau}, \rho_{s\wedge\tau}), s\geq 0$ by defining
\[
\widetilde C(\eta_{s\wedge\tau}, \rho_{s\wedge\tau}) :=
\begin{cases}
C(\eta_{s\wedge\tau}, \rho_{s\wedge\tau}), & \omega \in \Omega',\\[1ex]
M_{u, \nu}, & \omega \in \Omega \setminus \Omega',
\end{cases}
\]
where $\mathbb P_x\left(\widetilde C(\eta_{s\wedge\tau}, \rho_{s\wedge\tau}) = C(\eta_{s\wedge\tau}, \rho_{s\wedge\tau})\right) = 1$ for every $s\geq 0$. Then $\widetilde C = C$ almost surely, but $\widetilde C$ is bounded by $M_{u,\nu}<\infty$ everywhere. By Tonelli's theorem applied to the nonnegative process $\widetilde C(\eta_{s\wedge\tau_n}, \rho_{s\wedge\tau_n}) \cdot v(X_{s\wedge\tau_n})$, we have
\begin{equation*}
\begin{split}
\mathbb{E}_x\big[v(X_{t\wedge \tau_n})\big] 
&= v(x) + \int_0^t \mathbb{E}_x\Big[\widetilde C(\eta_{s\wedge\tau_n}, \rho_{s\wedge\tau_n}) \cdot v(X_{s\wedge\tau_n})\Big] \, \dif s \\
&\le v(x) + \int_0^t \mathbb{E}_x\Big[ \sup_{\omega \in \Omega} \left\{\widetilde C(\eta_{s\wedge\tau_n}, \rho_{s\wedge\tau_n}) \right\}\cdot v(X_{s\wedge\tau_n}) \Big] \, \dif s \\
&\le v(x) + M_{u, \nu} \int_0^t \mathbb{E}_x\big[v(X_{s\wedge\tau_n})\big] \, \dif s,
\end{split}
\end{equation*}
where the last inequality follows from the boundedness of $\widetilde C$ on all $\omega \in \Omega$ by the definition of the admissible control set $\mathcal{U}_0$.  
 In particular, the joint measurability of $C(\eta_{s \wedge \tau_n}, \rho_{s \wedge \tau_n})$ and $v(X_{s \wedge \tau_n})$ w.r.t. product $\sigma$-algebra $\mathcal B([0, u]) \otimes \mathcal F_u,\forall u\geq 0$, together with the bounds
$$
C(\eta_{s \wedge \tau_n}, \rho_{s \wedge \tau_n}) \geq \gamma + \sigma^2 > 0,
\quad
v(X_{s \wedge \tau_n}) \geq \frac{1}{n} > 0 \quad \text{a.e.},
$$
ensure the validity of applying Tonelli’s theorem.

Then, with the help of Gr{\"o}nwall's inequality, one has
\begin{equation}\label{ineq: gronwall}
v(x)e^{\overline{C}T}\ge \mathbb{E}_x(v(X_{T\wedge \tau_n})\ge \mathbb{E}_x\Big(\bm{1}_{\{\tau_n\le T\}}v(X_{\tau_n})\Big)= n\mathbb{P}_x(\tau_n\le T)\ge n\epsilon,
\end{equation}
where we applied the fact that $X_{\tau_n}$ equals to either $\frac{1}{n}$ or $1-\frac{1}{n}$. Then by sending $n\to \infty$ in \eqref{ineq: gronwall}, one arrives at the contradiction $\infty >v(x)e^{\overline{C} T} = \infty$, and completes the proof.
\end{proof} 

\begin{assumption}\label{a2}
    There exists a constant $K > 0$, and $m\in \mathbb N$ such that for any pair of controls $ (\eta,\rho)\in U$, we have 
    $$|f( x, \eta, \rho) - f(y, \eta, \rho)| \leq K (1 + |x|^m + |y|^m)|x - y|, $$
    
    $$|f(x, \eta, \rho)| \leq K(1 + |x| + |\eta|^2 + |\rho|^2), $$ for all $x, y\in (0, 1)$. Additionally, let $\delta > 0$, the cost function $f( x, \eta, \rho)$ is  bounded by a constant $C_f$ such that
    $$\mathbb E_x\left[\int_0^\infty e^{-\delta t}|f(X_t, \eta_t, \rho_t)|\dif t\right] < \infty. $$
\end{assumption}

\begin{remark}
A typical form of the cost function we shall investigate later in this paper can be expressed as  
\begin{equation}\label{eq:costf}
    f(x, \eta, \rho):= a_0 +a_I x+ a^S_{m}(1-\eta)^2 + (a^I_{m} - a^S_{m})x(1 - \eta)^2 +a_{r}x\rho^2, 
\end{equation}
where $a_0>0$ is the underlying marginal costs of running the system, $a_I>0$ is the 
marginal cost generated from infected nodes, $a^S_{m}$ and $a^I_{m}$ are the marginal costs 
associated with cyber risk management control for susceptible nodes and cyber-infected nodes, respectively. We shall assume that $a^I_{m}> a^S_{m}>0$, which means that costs associated 
with management for infected nodes are, in general, higher than those for 
functional and threat-free nodes. And, $a_r>0$ is the marginal cost of cyber risk mitigation control.
Not that Assumption \ref{a2} is satisfied for the class of cost functions defined in \eqref{eq:costf}. 

Note that,  in many operational risk models, the cost of infection is often assumed to be linear in the number of infected nodes because each infection event incurs a direct cost (e.g., downtime, data loss) that is proportional to the number of infected units. On the other hand, quadratic costs are standard in stochastic control literature because they represent the idea that the marginal cost of control increases with the intensity of control. In the context of cyber risk, increasing prevention measures ($\eta$) might require more expensive technologies or more highly skilled personnel, leading to increasing marginal costs. Similarly, reactive mitigation ($\rho$) might involve overtime pay for IT staff or the cost of rapid deployment of resources, which also has increasing marginal costs.
\end{remark}

To proceed, we show some important properties of the value function in the following proposition. Throughout the paper, we assume that the Assumption \ref{a2} holds true. 
\begin{proposition}\label{pp1}
\begin{enumerate}
\item[(i)] 
 If the running cost function $f(x, \cdot, \cdot)$ is nondecreasing in $x$ for each pair of admissible controls, then $V(x)$ is nondecreasing in $x$ for all $x\in(0,1)$.
 \item[(ii)]
For any $x,y\in(0,1), m\in\mathbb{N}$, there exists a constant $C>0$ such that
 \begin{equation}\label{ineq:V}
        |V(x) - V(y)|\leq C(1 + x^m + y^m)|x - y|.
    \end{equation}
   \end{enumerate}
\end{proposition}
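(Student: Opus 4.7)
The plan is to couple two copies of the controlled SDE \eqref{eq:SDE-X} started from $x$ and $y$, driven by the same Brownian motion and the same admissible control, and then transfer pointwise estimates on $J(\cdot\,;\eta,\rho)$ to estimates on $V(\cdot)$ via an $\epsilon$-optimizer argument. Proposition \ref{prop:2.1} ensures both coupled paths remain in $(0,1)$ almost surely, which keeps the polynomial factor $1+x^m+y^m$ naturally bounded on the path space.

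For part (i), fix any $(\eta,\rho)\in\mathcal{U}_0$ and let $X^x,X^y$ be the coupled solutions with $x\le y$. Since the diffusion coefficient $\sigma(\cdot)$ is Lipschitz and control-free, and $b(\cdot,\eta_t,\rho_t)$ is Lipschitz with a constant uniform along the essentially bounded admissible path (Assumption \ref{a1}), the one-dimensional SDE comparison principle (Ikeda--Watanabe) yields $X_t^x\le X_t^y$ almost surely for all $t\ge 0$. Monotonicity of $f(\cdot,\eta,\rho)$ then gives $f(X_t^x,\eta_t,\rho_t)\le f(X_t^y,\eta_t,\rho_t)$ pathwise, hence $J(x;\eta,\rho)\le J(y;\eta,\rho)$. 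Picking an $\epsilon$-optimizer $(\eta^\epsilon,\rho^\epsilon)$ for $V(y)$ yields $V(x)\le J(x;\eta^\epsilon,\rho^\epsilon)\le J(y;\eta^\epsilon,\rho^\epsilon)\le V(y)+\epsilon$, and letting $\epsilon\to 0$ closes monotonicity.

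For part (ii), again pick an $\epsilon$-optimizer $(\eta^\epsilon,\rho^\epsilon)$ for $V(y)$, so that $V(x)-V(y)\le J(x;\eta^\epsilon,\rho^\epsilon)-J(y;\eta^\epsilon,\rho^\epsilon)+\epsilon$. The polynomial-growth Lipschitz estimate for $f$ in Assumption \ref{a2}, combined with $X_t^x,X_t^y\in(0,1)$, bounds the right-hand side by $3K\int_0^\infty e^{-\delta t}\mathbb E|X_t^x-X_t^y|\,\dif t$. Applying Ito's formula to $(X_t^x-X_t^y)^2$, using the Lipschitz bounds of Assumption \ref{a1}, and Gronwall's inequality then give $\mathbb E|X_t^x-X_t^y|^2\le |x-y|^2 e^{Lt}$ for some $L>0$; Jensen's inequality delivers the first-moment analogue. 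Integrating against $e^{-\delta t}$ produces a Lipschitz-type bound $C|x-y|$, and the symmetric argument from an $\epsilon$-optimizer for $V(x)$ yields \eqref{ineq:V} once we note that $1+x^m+y^m\ge 1$.

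The main obstacle is ensuring that the constant $C$ in \eqref{ineq:V} is genuinely independent of the $\epsilon$-optimizer, because the Gronwall constant $L$ produced by the Lipschitz--Ito step grows with the essential supremum of the admissible control. Under the standard infinite-horizon condition $\delta>L/2$, the integral $\int_0^\infty e^{-\delta t}e^{Lt/2}\,\dif t$ equals $1/(\delta-L/2)$ and the Lipschitz bound follows directly. Otherwise, one exploits the trivial pathwise bound $|X_t^x-X_t^y|\le 1$ by the splitting $\int_0^\infty=\int_0^T+\int_T^\infty$ with a judicious choice $T=T(|x-y|)$, which is the standard device for handling the tension between exponential SDE growth and the discount factor in infinite-horizon control problems.
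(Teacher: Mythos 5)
Your part (i) is essentially the paper's argument: the paper does not cite the Ikeda--Watanabe comparison theorem but reproves it from scratch with the Yamada--Watanabe auxiliary functions $\psi_n$, arriving at $\mathbb{E}[X^{x,\eta,\rho}_t-X^{y,\eta,\rho}_t]^+=0$ via Gr\"onwall; your citation of the comparison theorem is legitimate here because the diffusion coefficient is control-free and Lipschitz and the two coupled equations share the same drift, and the transfer to $V$ via an $\epsilon$-optimizer is identical. For part (ii) your main line is cleaner than the paper's: you use $X^x_t,X^y_t\in(0,1)$ to bound the weight $1+|X^x_t|^m+|X^y_t|^m$ by $3$, whereas the paper keeps the weight, applies H\"older with exponents $m/(m+1)$ and $1/(m+1)$, and invokes Krylov's moment estimates on a finite window $[0,T]$, absorbing everything into a constant $C(T,m,\delta)$.

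The genuine gap is in your fallback for the case $\delta\le L/2$. The splitting $\int_0^\infty=\int_0^T+\int_T^\infty$ with the trivial bound $|X^x_t-X^y_t|\le 1$ does not produce a Lipschitz estimate: with $\mathbb{E}|X^x_t-X^y_t|\le\min\bigl(|x-y|e^{Lt/2},1\bigr)$, the integral $\int_0^\infty e^{-\delta t}\min\bigl(|x-y|e^{Lt/2},1\bigr)\,\dif t$ is of order $|x-y|^{2\delta/L}$ when $2\delta<L$. Indeed, the tail forces $e^{-\delta T}\lesssim|x-y|$, i.e.\ $T\gtrsim\delta^{-1}\log(1/|x-y|)$, and then the head contributes $|x-y|\,e^{(L/2-\delta)T}\gtrsim|x-y|^{2-L/(2\delta)}$, which is not $O(|x-y|)$; no choice of $T$ does better than the H\"older exponent $2\delta/L<1$. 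So the ``standard device'' yields only H\"older continuity, not \eqref{ineq:V}. You have correctly isolated the real tension --- the Gr\"onwall rate $L$ versus the discount $\delta$, with $L$ depending on the essential bound of the $\epsilon$-optimal control --- but the proposed resolution does not close it. To be fair, the paper's own proof does not fully resolve this either: it works on $[0,T]$ for fixed $T$, leaves a tail term $2C_fe^{-\delta T}/\delta$ that is not $O(|x-y|)$ for fixed $T$, and cannot send $T\to\infty$ without blowing up the factor $Ne^{NT}$. A clean argument needs either the hypothesis that $\delta$ dominates the control-uniform Lipschitz constants (consistent with the largeness conditions on $\delta$ imposed elsewhere in the paper, cf.\ Assumption \ref{assumption-F} and Appendix \ref{App:B}) or a genuinely different estimate; your write-up should state that hypothesis explicitly rather than claim the unconditional bound.
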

\begin{proof}  
See Appendix \ref{App:A}.
\end{proof}

\section{Hamilton-Jacobi-Bellman equation and the viscosity solution }
\label{sec:3}
In this section, we first state the dynamic programming equation associated with our stochastic control problem and derive (heuristically) the corresponding Hamilton-Jacobi-Bellman (HJB) equation.
Let $\theta$ be any $\mathbb{F}$-stopping time, for any $x\in(0,1)$,  we have 
\begin{equation}\label{eq:DPP}
    V(x) = \inf_{(\eta, \rho)\in \mathcal U_0} \mathbb E_x\left[\int_0^\theta e^{-\delta t} f(X_t,\eta_t,\rho_t)\dif t + e^{-\delta\theta}V(X_\theta)\right]
\end{equation}
The proof of \eqref{eq:DPP} for controlled diffusion processes follows the classical arguments in the literature, see for example, Theorem 3.1.6 of \cite{krylov1980controlled}, where the main point is the continuity of the value function, which is the present case in our study.

If the value function is sufficiently smooth, then by following the standard arguments with the help of the dynamic programming principle and It\^o's formula, we obtain the following HJB equation,
\begin{equation}\label{eq:HJB}
\inf_{(\rho,\eta)\in U}\left\{b(x,\eta,\rho)u_x(x) + \frac{1}{2}\sigma^2(x) u_{xx}(x) - \delta u(x) + f(x, \eta, \rho) \right\} = 0,\quad x\in(0,1).
\end{equation}
Note that the heuristic arguments verifying that value function $V$ is a classical solution to the HJB equation \eqref{eq:HJB} assume in prior that $V$ is twice continuously differentiable on $(0,1)$, which is not the present case, where we only have Lipchitz continuity in $V$. Hence, we adopt the concept of the viscosity solution introduced in \cite{crandall1983viscosity} and characterize the optimal value function $V$ as the unique viscosity solution to the HJB equation \eqref{eq:HJB}.  

\begin{definition}\label{def:vis}
Let $u:(0,1) \to \mathbb{R}$ be a locally Lipschitz continuous function.
\begin{enumerate}
    \item[(i)]  $u$ is local viscosity supersolution of \eqref{eq:HJB} at $x\in(0,1)$, if for any $\varphi\in C^2((0,1))$ with $u\ge \varphi$, whenever $u-\varphi$ attains a local minimum at $x $, we have
 \[
 \inf_{(\rho,\eta)\in U}\left\{b(x,\eta,\rho)\varphi_x(x) + \frac{1}{2}\sigma^2( x) \varphi_{xx}(x) - \delta \varphi(x) + f(x, \eta, \rho)\right\} \le 0.
 \]
 \item[(ii)] $u$ is a local viscosity subsolution of \eqref{eq:HJB} at  $x\in (0,1)$, if for any $\phi\in C^2((0,1))$ with $u\le \phi$, whenever $u-\phi$ attains a local maximum at $x$, we have
 \[
 \inf_{(\rho,\eta) \in U}\left\{b(x,\eta,\rho)\phi_x(x) + \frac{1}{2}\sigma^2( x) \phi_{xx}(x) - \delta \phi(x) + f(x, \eta, \rho) \right\} \ge 0.
 \]
 \item[(iv)]  $u$ is a viscosity solution of \eqref{eq:HJB} on $(0,1)$ if it is both a viscosity supersolution and subsolution of \eqref{eq:HJB} on  all $x\in (0,1)$.
\end{enumerate}
\end{definition}
We first provide the result regarding the existence of the viscosity solution for the HJB equation \eqref{eq:HJB} on $(0,1)$.
\begin{proposition}
\label{prop:existence}
    The value function $V$ is a viscosity solution of \eqref{eq:HJB} on $(0,1)$.
\end{proposition}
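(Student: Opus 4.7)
The plan is to verify the supersolution and subsolution properties of $V$ separately, in each case by combining the dynamic programming principle \eqref{eq:DPP} with It\^o's formula applied to the $C^2$ test function along the controlled state. Both halves begin with the same localization near the interior point $x_0\in(0,1)$: pick $r>0$ so that $[x_0-r,x_0+r]\subset(0,1)$ and the touching condition $V\geq\varphi$ (or $V\leq\phi$) holds on the closed ball; define $\tau_r := \inf\{t\geq 0: X_t\notin(x_0-r,x_0+r)\}$ and the reduction stopping time $\theta_h := h\wedge\tau_r$; and assume without loss of generality that $V(x_0)=\varphi(x_0)$ (respectively $V(x_0)=\phi(x_0)$) by adding a constant to the test function. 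Proposition \ref{prop:2.1} guarantees $X_t\in(0,1)$, so the test function and its first two derivatives are bounded along $[0,\theta_h]$, ensuring all stochastic integrals below are true martingales.

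For the subsolution direction, fix an arbitrary constant control $(\eta_0,\rho_0)\in U$; applying \eqref{eq:DPP} with this suboptimal constant strategy and the bound $V(X_{\theta_h})\leq\phi(X_{\theta_h})$ on the ball yields $V(x_0)\leq\mathbb{E}_{x_0}[\int_0^{\theta_h}e^{-\delta t}f(X_t,\eta_0,\rho_0)\dif t + e^{-\delta\theta_h}\phi(X_{\theta_h})]$. Applying It\^o's formula to $e^{-\delta t}\phi(X_t)$, taking expectations and cancelling $V(x_0)=\phi(x_0)$, I obtain
\[
0 \leq \mathbb{E}_{x_0}\!\left[\int_0^{\theta_h} e^{-\delta t}\Big(b(X_t,\eta_0,\rho_0)\phi_x(X_t) + \tfrac12\sigma^2(X_t)\phi_{xx}(X_t) - \delta\phi(X_t) + f(X_t,\eta_0,\rho_0)\Big)\dif t\right].
\]
Dividing by $h$ and letting $h\to 0^+$, dominated convergence with the continuity of $b,\sigma,f$ in $x$ (Assumptions \ref{a1}--\ref{a2}) and $X_t\to x_0$ on $\{\theta_h=h\}$ force the integrand at $x_0$ with control $(\eta_0,\rho_0)$ to be nonnegative; taking the infimum over $(\eta_0,\rho_0)\in U$ delivers the required subsolution inequality.

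For the supersolution direction, I would use the full DPP equality, bound $V(X_{\theta_h})\geq\varphi(X_{\theta_h})$ under the expectation, apply It\^o to $e^{-\delta t}\varphi(X_t)$ for a general admissible control, and cancel $V(x_0)=\varphi(x_0)$ to obtain
\[
0 \geq \inf_{(\eta,\rho)\in\mathcal{U}_0}\mathbb{E}_{x_0}\!\left[\int_0^{\theta_h}e^{-\delta t}\Big(b(X_t,\eta_t,\rho_t)\varphi_x(X_t)+\tfrac12\sigma^2(X_t)\varphi_{xx}(X_t)-\delta\varphi(X_t)+f(X_t,\eta_t,\rho_t)\Big)\dif t\right].
\]
Dominating the integrand pointwise by its infimum over $U$ then moves the infimum inside the expectation: $0\geq\mathbb{E}_{x_0}[\int_0^{\theta_h}e^{-\delta t}H(X_t)\dif t]$, where $H(x):=\inf_{(\eta,\rho)\in U}\{b(x,\eta,\rho)\varphi_x(x)+\tfrac12\sigma^2(x)\varphi_{xx}(x)-\delta\varphi(x)+f(x,\eta,\rho)\}$. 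Dividing by $\mathbb{E}_{x_0}[\theta_h]$ and letting $h\to 0^+$ gives $H(x_0)\leq 0$, which is precisely the supersolution inequality.

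The main technical obstacle is the regularity of $H$ needed for this last limit: since $U=[0,1]\times[0,\infty)$ is non-compact in the $\rho$-direction, $H$ is a priori only upper semicontinuous as a pointwise infimum of continuous functions, whereas the limit argument requires at least lower semicontinuity at $x_0$. I would resolve this by exploiting the quadratic coercivity of $f$ in $\rho$ (the $a_r x\rho^2$ term in the representative cost \eqref{eq:costf} dominates the linear-in-$\rho$ contribution $-x(\gamma+\rho)\varphi_x(x)$ coming from $b\varphi_x$), so that the minimizer in $\rho$ of the bracket defining $H(x)$ is bounded uniformly for $x$ near $x_0$; the infimum is therefore effectively taken over a compact subset of $U$ and $H$ is continuous at $x_0$. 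The same truncation argument also ensures that the near-minimizers realizing the DPP infimum can be taken in $\mathcal{U}_0$ without loss.
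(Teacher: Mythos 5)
Your proof follows essentially the same route as the paper's: localize at the touching point with an exit-time stopping rule, combine the dynamic programming principle \eqref{eq:DPP} with It\^o's formula applied to the $C^2$ test function, use a fixed constant control for the subsolution half and the infimum over admissible controls for the supersolution half, and conclude by a small-time limit (the paper phrases this last step as a contradiction on a small neighbourhood rather than dividing by $h$, but the content is identical). Your added discussion of the lower semicontinuity of the Hamiltonian $H$ over the non-compact action set $U$, resolved via the quadratic coercivity of $f$ in $\rho$, addresses a point the paper's proof simply asserts as ``continuity uniformly in the control,'' so it is a refinement of, not a departure from, the paper's argument.
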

\begin{proof}
See Appendix \ref{app:existence}.
\end{proof}

To prove the uniqueness, we introduce the following alternative definition of viscosity solution (for second-order differential equations), see, for example, \cite{yong1999stochastic}.
For any function $u\in C((0,1))$ and $x\in(0,1)$, the so-called \textit{second--order superdiffiential} of $u$ at $x$ is defined as 
\[
D^{2,+}_x u(x):= \Bigg\{(p,q)\in\mathbb{R}\times\mathbb{R}: \lim\sup_{h\to 0}\frac{u(x+h) -u(x) - ph -\frac{q}{2}h^2 }{h^2} \le 0 \Bigg\},
\]
and the \textit{second--order subdiffiential} of $u$ at $x$ is defined as
\[
D^{2,-}_x u(x):= \Bigg\{(p,q)\in\mathbb{R}\times\mathbb{R}: \lim\inf_{h\to 0}\frac{u(x+h) -u(x) - ph -\frac{q}{2}h^2 }{h^2} \ge 0 \Bigg\}.
\]
In addition, we let 
 \[
 F(x,u(x),u_x(x),u_{xx}(x))  :=\inf_{(\rho,\eta)\in U}\left\{b(x,\eta,\rho)u_x(x) + \frac{1}{2}\sigma^2( x) u_{xx}(x) - \delta u(x) + f(x, \eta, \rho) \right\},
 \]
 and rewrite \eqref{eq:HJB} as
\begin{equation}\label{eq:HJB2}
 F(x,u(x),u_x(x),u_{xx}(x)) = 0,\quad \text{for } x\in(0,1).
\end{equation}
\begin{definition}
 A Lipschitz continuous function $u: (0,1)\to \mathbb{R}$ is called a viscosity supersolution of \eqref{eq:HJB2}  at $x\in (0,1)$ if 
 \[
F(x,u(x),p,q) \le 0,
 \]
 for all $(p,q)\in D^{2,-}_x u(x)$.

 A  Lipschitz continuous function $u: (0,1)\to \mathbb{R}$ is called a viscosity subsolution of \eqref{eq:HJB2} at $x\in(0,1)$ if 
 \[
 F(x,u(x),p,q) \ge 0,
 \]
 for all $(p,q)\in D^{2,+}_xu(x)$.
 
If $u$ is both a viscosity supersolution and a viscosity subsolution of \eqref{eq:HJB2} at $x\in(0,1)$, then it is a viscosity solution at $x$.
\end{definition}
\begin{proposition}(Comparison Principle)
\label{prop:comparison}
    Let the increasing and Lipschitz continuous functions $u$ and $v$ be a viscosity supersolution and a viscosity subsolution of the HJB equation \eqref{eq:HJB} (as well as \eqref{eq:HJB2}) respectively. For any closed interval $\mathcal{O}\subset (0,1)$, if $u\ge v$ on $\partial \mathcal{O}$, then $u\ge v$ on $\mathcal{O}$.
\end{proposition}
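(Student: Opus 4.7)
The plan is to use the classical doubling-of-variables technique of Crandall, Ishii and Lions, exploiting the strict $\delta>0$ monotonicity of $F$ in the zeroth-order argument to extract the contradiction. Arguing by contradiction, suppose $M:=\max_{\mathcal{O}}(v-u)>0$. Since $u\ge v$ on $\partial\mathcal{O}$, this maximum must be attained at some interior point. For $\epsilon>0$, I would introduce the penalized functional
\[
\Phi_\epsilon(x,y) := v(x) - u(y) - \frac{(x-y)^2}{2\epsilon}
\]
on $\mathcal{O}\times\mathcal{O}$ and let $(x_\epsilon,y_\epsilon)$ attain its maximum $M_\epsilon$. A standard penalization lemma (e.g., Lemma 3.1 of Crandall-Ishii-Lions) yields $M_\epsilon\to M$, $\frac{(x_\epsilon-y_\epsilon)^2}{\epsilon}\to 0$, and $x_\epsilon,y_\epsilon\to\bar{x}$ along a subsequence for some interior $\bar{x}$; the boundary hypothesis together with continuity ensures the limit is truly interior for small $\epsilon$.

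Next, I would invoke the Crandall-Ishii theorem of sums to produce real numbers $X,Y$ satisfying the matrix inequality
\[
\begin{pmatrix} X & 0 \\ 0 & -Y \end{pmatrix} \;\le\; \frac{3}{\epsilon}\begin{pmatrix} 1 & -1 \\ -1 & 1 \end{pmatrix},
\]
with $(p_\epsilon,X)\in D^{2,+}_x v(x_\epsilon)$ and $(p_\epsilon,Y)\in D^{2,-}_x u(y_\epsilon)$, where $p_\epsilon=\frac{x_\epsilon-y_\epsilon}{\epsilon}$. Combining the subsolution inequality $F(x_\epsilon,v(x_\epsilon),p_\epsilon,X)\ge 0$ with the supersolution inequality at an $\epsilon'$-minimizer $(\eta^*,\rho^*)\in U$ of $F(y_\epsilon,u(y_\epsilon),p_\epsilon,Y)\le 0$, and evaluating the subsolution infimum at the same $(\eta^*,\rho^*)$, I would obtain
\[
\delta[v(x_\epsilon)-u(y_\epsilon)] \le [b(x_\epsilon,\eta^*,\rho^*)-b(y_\epsilon,\eta^*,\rho^*)]p_\epsilon + \tfrac12[\sigma^2(x_\epsilon)X-\sigma^2(y_\epsilon)Y] + [f(x_\epsilon,\eta^*,\rho^*)-f(y_\epsilon,\eta^*,\rho^*)] + \epsilon'.
\]
The Lipschitz estimate from Assumption \ref{a1} controls the drift term by $K_1\frac{(x_\epsilon-y_\epsilon)^2}{\epsilon}$; the local Lipschitz bound in Assumption \ref{a2} together with boundedness of $x_\epsilon,y_\epsilon$ on $\overline{\mathcal{O}}\subset(0,1)$ controls the cost term by $C|x_\epsilon-y_\epsilon|$; and applying the matrix inequality with the test vector $(\sigma(x_\epsilon),\sigma(y_\epsilon))$ yields $\sigma^2(x_\epsilon)X-\sigma^2(y_\epsilon)Y\le \frac{3K_1^2}{\epsilon}(x_\epsilon-y_\epsilon)^2$. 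Sending first $\epsilon'\to 0$ and then $\epsilon\to 0$ gives $\delta M\le 0$, contradicting $M>0$.

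The principal obstacle I anticipate is handling the unbounded control space $U=[0,1]\times[0,\infty)$, since the argument demands that the $\epsilon'$-minimizer $(\eta^*,\rho^*)$ from the supersolution inequality be uniformly bounded in $\epsilon,\epsilon'$ so that Lipschitz constants for $b$ and $f$ apply cleanly at that particular control. This is resolved by exploiting the quadratic growth of $f$ in $(\eta,\rho)$ (from Assumption \ref{a2} and the explicit form in \eqref{eq:costf}) which strictly dominates the linear growth of $b$ in $(\eta,\rho)$: any minimizing sequence of the HJB infimum is trapped in a compact subset of $U$ depending only on bounds for $|p_\epsilon|$, which are themselves controlled by the Lipschitz constant of $v-u$ via $|p_\epsilon|\le\mathrm{Lip}(v)+\mathrm{Lip}(u)$. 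A secondary subtlety is ruling out that $(x_\epsilon,y_\epsilon)$ escapes to $\partial\mathcal{O}$: this uses the boundary inequality $v\le u$ on $\partial\mathcal{O}$ together with continuity to force $M_\epsilon\not\to M>0$ on the boundary, so the maximizer is genuinely interior for $\epsilon$ small.
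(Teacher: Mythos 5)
Your proposal is correct and follows the same backbone as the paper's argument: doubling of variables, the Crandall--Ishii theorem of sums, and extraction of the contradiction from the strict $\delta$-monotonicity of $F$ in the zeroth-order slot. The differences are in execution. The paper does not use the plain quadratic penalization: it works on the half-space $\{x\le y\}$ with the modified test function $\Psi_\alpha(x,y)=\tfrac{\alpha}{2}(x-y)^2+\tfrac{2K}{\alpha^2(y-x)+\alpha}$, and it is the \emph{monotonicity} of $u$ and $v$ (together with the boundary ordering) that forces the maximizer into the interior; your route instead secures interiority purely from $u\ge v$ on $\partial\mathcal{O}$, continuity, and $M_\epsilon\to M>0$, which is the standard argument and suffices on a compact interval, so the extra penalization term is not needed for your version. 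Your treatment of the second-order term, testing the matrix inequality against $(\sigma(x_\epsilon),\sigma(y_\epsilon))$ to get $\sigma^2(x_\epsilon)X-\sigma^2(y_\epsilon)Y\le \tfrac{3}{\epsilon}(\sigma(x_\epsilon)-\sigma(y_\epsilon))^2=O(\tfrac{(x_\epsilon-y_\epsilon)^2}{\epsilon})\to 0$, is actually cleaner than the paper's, which passes to the limit in $\sigma^2$ first and then invokes $A_\varepsilon-B_\varepsilon\le 0$. Likewise, your comparison of the two infima via an $\epsilon'$-minimizer $(\eta^*,\rho^*)$ of the supersolution side is the correct way to subtract the two HJB inequalities (the paper's displayed bound with $\inf$ rather than $\sup$ of the differences is, as written, in the wrong direction), and your explicit confinement of the minimizing controls to a compact set via the quadratic coercivity of $f$ addresses a point the paper leaves implicit. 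In short: same method, but your version is the more standard and, in a couple of spots, the more carefully justified one; the paper's version buys interiority from monotonicity rather than from the boundary data.
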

\begin{proof}
See Appendix \ref{app:unique}. 
\end{proof}

\begin{proposition}\label{prop:verification}
   Let $v\in C((0,1))$ be any increasing and Lipschtiz continuous viscosity supersolution of \eqref{eq:HJB}, then $v(x)\ge V(x)$ for all $x\in(0,1)$.
\end{proposition}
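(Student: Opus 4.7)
The plan is to prove $v \ge V$ via a stochastic verification argument: use the supersolution property of $v$ to construct an (approximately) optimal Markov feedback control, and show that $v$ dominates the corresponding cost functional $J(x;\cdot)$ along that feedback, which in turn dominates $V = \inf_{\mathcal U_0} J$. The comparison principle (Proposition \ref{prop:comparison}) is not directly applicable since we lack a priori boundary information on $\{0,1\}$; instead I would exploit the fact that the controlled process $X_t$ never exits $(0,1)$ (Proposition \ref{prop:2.1}), using compact sub-intervals $[1/n,1-1/n]$ as stopping regions.

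Since $v$ is only Lipschitz (not $C^2$), I would first regularize $v$ via inf-convolution, $v^\epsilon(x):=\inf_{y\in(0,1)}\{v(y)+(x-y)^2/(2\epsilon)\}$. Standard viscosity theory ensures $v^\epsilon$ is Lipschitz and semiconcave (hence in $W^{2,\infty}_{\mathrm{loc}}$ and twice-differentiable a.e.), $v^\epsilon\nearrow v$ uniformly on compact subsets of $(0,1)$, and $v^\epsilon$ retains the supersolution property up to an error $\omega(\epsilon)\to 0$. A measurable selection argument applied to $\inf_{(\eta,\rho)\in U}\{\cdots\}\le \omega(\epsilon)$ at a.e.\ $x$ then yields a Borel-measurable feedback $\hat u_\epsilon(x)=(\hat\eta_\epsilon(x),\hat\rho_\epsilon(x))$, with uniform boundedness guaranteed by the coercivity of $f$ in $(\eta,\rho)$ (Assumption \ref{a2} together with the explicit quadratic-in-control form in Remark 1.1), so that $\hat u_\epsilon\in\mathcal U_0$.

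Next, I would apply the Itô--Krylov formula (valid for $W^{2,\infty}_{\mathrm{loc}}$ functions on intervals where $\sigma(x)=\sigma x(1-x)$ is bounded away from zero) to $e^{-\delta t}v^\epsilon(X_t)$, with $X$ driven by the closed-loop $\hat u_\epsilon$ and stopped at $t\wedge\tau_n$, where $\tau_n:=\inf\{t:X_t\notin[1/n,1-1/n]\}$. The near-supersolution inequality, after taking expectations, produces
\[
v^\epsilon(x) \;\ge\; \mathbb E_x\!\left[\int_0^{t\wedge\tau_n} e^{-\delta s}\,f(X_s,\hat u_\epsilon(X_s))\,\dif s \;+\; e^{-\delta(t\wedge\tau_n)}\,v^\epsilon(X_{t\wedge\tau_n})\right] \;-\; \omega(\epsilon)/\delta .
\]
Passing to the limit in the order $n\to\infty$ (using $\tau_n\to\infty$ a.s.\ by Proposition \ref{prop:2.1}), then $t\to\infty$ (the boundary term vanishes because $v^\epsilon$ is bounded on $[0,1]$ and $\delta>0$), and finally $\epsilon\downarrow 0$ (using Assumption \ref{a2} and dominated convergence on the running-cost integral) gives $v(x)\ge \liminf_{\epsilon}J(x;\hat u_\epsilon)\ge V(x)$.

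The main obstacle is the non-smoothness of the Lipschitz supersolution $v$, which rules out a direct Itô expansion. This is resolved by the two intertwined technical steps above: (i) preserving a pointwise (a.e.) near-supersolution inequality after inf-convolution, and (ii) extracting an admissible Borel-measurable feedback from the infimum condition. Both rely on standard but non-trivial tools from viscosity theory (semiconvex envelopes, Kuratowski--Ryll-Nardzewski type selection) combined with Krylov's Itô formula on compact sub-intervals where the diffusion coefficient is uniformly non-degenerate.
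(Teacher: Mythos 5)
Your proposal follows exactly the route the paper has in mind: the authors do not actually write out a proof of Proposition \ref{prop:verification}, stating only that it ``follows the standard arguments of using It\^o's formula with a density argument dealing with the nonsmoothness'' and citing \cite{nguyen2004some,azcue2005optimal}; your regularize--select--It\^o--localize scheme is precisely that standard argument. The overall logic is sound, including the correct direction of the supersolution inequality (it yields an $\omega(\varepsilon)$-optimal feedback, hence $v\ge v^{\varepsilon}\ge J(x;\hat u_{\varepsilon})-\omega(\varepsilon)/\delta\ge V(x)-\omega(\varepsilon)/\delta$) and the correct use of Proposition \ref{prop:2.1} to replace the missing boundary data by localization on $[1/n,1-1/n]$, where $\sigma(\cdot)$ is uniformly nondegenerate.

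Two technical points need repair. First, semiconcavity of the inf-convolution $v^{\varepsilon}$ does \emph{not} give $v^{\varepsilon}\in W^{2,\infty}_{\mathrm{loc}}$: it only bounds the distributional second derivative from \emph{above} by $1/\varepsilon$, and the singular part of $D^{2}v^{\varepsilon}$ may be a genuinely negative measure (e.g.\ $-|x|$ is semiconcave but not $W^{2,\infty}_{\mathrm{loc}}$), so the It\^o--Krylov formula as you invoke it is not available. The standard fix in dimension one is the It\^o--Tanaka--Meyer formula for differences of convex functions: the nonpositive singular part of $D^{2}v^{\varepsilon}$ is multiplied by $\tfrac12\sigma^{2}\ge 0$ and therefore contributes with the favourable sign, so the required inequality survives using only the absolutely continuous (Alexandrov) part, which is exactly where the a.e.\ near-supersolution inequality holds; ``a.e.\ $x$'' then transfers to ``a.e.\ $(s,\omega)$'' by Krylov's estimate, since the diffusion is nondegenerate on $[1/n,1-1/n]$. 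Second, the closed-loop SDE driven by a merely Borel-measurable feedback $\hat u_{\varepsilon}$ in general admits only a \emph{weak} solution, whereas $\mathcal U_{0}$ and $J$ are formulated on a fixed filtered space; you should either pass to the weak formulation of the control problem (noting that the two value functions coincide under Assumptions \ref{a1}--\ref{a2}) or approximate $\hat u_{\varepsilon}$ by open-loop admissible controls. Both fixes are routine, and with them your argument is complete and consistent with the (omitted) proof the paper intends.
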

\begin{proof}
    The proof follows the standard arguments of using It\^o's formula with a density argument dealing with the nonsmoothness of any viscosity supersolution $v$ of \eqref{eq:HJB}, see, for example, \cite{nguyen2004some, azcue2005optimal}. We omit the details here.  
\end{proof}

By Proposition \ref{prop:verification}, for any closed interval $\mathcal{O}\subset (0,1)$, we can characterize the value function $V$ as the (local) viscosity solution of \eqref{eq:HJB} with the smallest value on the boundary $\partial  \mathcal{O}$ in the class of increasing and Lipschitz continuous viscosity solutions of \eqref{eq:HJB}. Let $\mathcal{V}$ denote the set of all increasing and Lipschitz continuous functions  that are  (local) viscosity solutions of \eqref{eq:HJB} on $\mathcal{O}$, then, we characterize the value function $V$ as  
\begin{equation}\label{eq:charaV}
    V = \{v\in \mathcal{V}~|~ v(x)\le h(x) \text{ for all } h\in\mathcal{V} \text{ and } x\in\partial\mathcal{O} \}.
\end{equation}
\begin{proposition}\label{prop:uniqueVS}
    For any closed interval $\mathcal{O}\in(0,1)$, the value function $V$ characterized by \eqref{eq:charaV} is the unique viscosity solution of \eqref{eq:HJB} on $\mathcal{O}$ in the class of increasing and Lipschitz continuous functions.
\end{proposition}
\begin{proof}
Let $\varphi$ be another increasing and Lipschitz continuous viscosity solution $\varphi$ of \eqref{eq:HJB} on $\mathcal{O}$ satisfying \eqref{eq:charaV}. On one hand, $\varphi$ is a viscosity supersolution, and by Proposition  \ref{prop:verification}, we have $\varphi\ge V$ on $\mathcal{O}$. On the other hand, $\varphi$ is also an increasing and Lipschitz continuous viscosity subsolution of \eqref{eq:HJB} with the fact that $\varphi(x)\le V(x)$ for $x\in\partial \mathcal{O}$ (since $V\in \mathcal{V}$), then by the comparison principle given in Proposition \ref{prop:comparison}, we have $V\ge \varphi$ on $\mathcal{O}$, whenever $V$ is considered as a viscosity supersolution and $\varphi$ as a viscosity subsolution. Therefore, we obtain $\varphi=V$ on $\mathcal{O}$ and complete the proof.
\end{proof}

\begin{proposition}\label{prop:uniqueVS-2}
   The value function $V$ given by \eqref{problem}  is the unique viscosity solution of the HJB equation \eqref{eq:HJB} on $(0,1)$.
\end{proposition}
\begin{proof}
    Let $u \in C((0,1))$ be any increasing and Lipschtiz continuous viscosity solution of \eqref{eq:HJB}. We aime to show $u=V$ on $(0,1)$. Fix any arbitrary point  $x_0\in(0,1)$, there exists  a closed interval $\mathcal{O}\subset(0,1)$ such that $x_0\in \mathcal{O}$. By Propositions \ref{prop:verification} and \ref{prop:uniqueVS}, we have $V$ is the unique viscosity solution on $\mathcal{O}$. Becasue viscosity solutions are defined locally (see Definition \ref{def:vis}), we have $u$ retricted on $\mathcal{O}$ is also a viscosity solution of the HJB equation \eqref{eq:HJB} on $\mathcal{O}$. By the uniqueness, we must have $u=V$ on $\mathcal{O}$, or in particular $u(x_0)=V(x_0)$, Then, by the arbitrariness of $x_0$ in $(0,1)$, we arrive at $u=V$ on $(0,1)$. Hence, $V$ is the unique viscosity solution of \eqref{eq:HJB} on $(0,1)$.
    
\end{proof}

\section{Numerical algorithm and examples}
\label{sec:4}
\subsection{Policy improvement algorithm}
Obtaining a closed-form solution to \eqref{eq:HJB} is seldom feasible; hence, in this 
paper, we apply a policy improvement algorithm, namely Bellman--Howard policy 
improvement/iteration algorithm (see Algorithm \ref{PIA} below), to solve the cyber risk management and mitigation 
problem numerically.  Iterative algorithms for solving optimal control problems can trace 
their origins to Bellman’s pioneering work \cite{bellman1955functional,bellman1966dynamic}, which introduced value iteration methods 
for finite space-time problems and established their convergence properties. \cite{howard1960dynamic} later developed the policy improvement algorithm in the context 
of discrete space-time Markov decision processes (MDPs). 

The proof of the convergence is of paramount importance in using policy improvement algorithms. Among the earliest convergence analyses for policy iteration in MDPs is the work of \cite{puterman1979convergence}, which employed an abstract function--space framework applicable to both discrete and continuous settings. A key insight from their study is that policy iteration can be interpreted as a form of Newton’s method, inheriting similar convergence properties, i.e., whenever initialized near the true solution, the algorithm achieves quadratic convergence. Later \cite{puterman1981convergence} provides similar results on the convergence of the policy iteration algorithm for controlled diffusion processes. Further extensions were made by \cite{santos2004convergence}, who examined discrete-time problems with continuous state and control spaces. Their work generalizes the results of \cite{puterman1979convergence}, demonstrating global convergence while retaining local quadratic convergence rate under standard conditions and superlinear convergence rate under broader assumptions.  More recently, under a setting of continuous space--time controlled diffusion processes,  \cite{kerimkulov2020exponential} established a global rate of convergence and stability of the Bellman--Howard policy iteration algorithm with the help of techniques in Backward Stochastic Differential Equations (BSDEs). Therefore, we follow the main steps in \cite{kerimkulov2020exponential} when proving the convergence of our policy iteration algorithm, the main theorem is given in Theorem \ref{them:convegencePIA} below. Note that the study in \cite{kerimkulov2020exponential} focuses on finite-horizon stochastic control problems; a brief discussion of the infinite-horizon counterpart can be found in Remark 4.3 of \cite{kerimkulov2020exponential}, which suggests that the convergence result still holds for some sufficiently large discount factor $\delta > 0$, although no formal proof is provided.

For numerical illustration, we consider a sufficiently large closed interval $\mathcal{O}=[\underline{x},\overline{x}]\subset (0,1)$. Further, we let the running cost function $f$ be in the form of \eqref{eq:costf},  
\begin{equation*}
    f(x, \eta, \rho)= a_0 +a_I x+ a^S_{m}(1-\eta)^2 + (a^I_{m} - a^S_{m})x(1 - \eta)^2 +a_{r}x\rho^2, \quad  (x,\eta,\rho)\in \mathcal{O}\times U,
\end{equation*}
where $a_0$ is the baseline marginal costs associated with the management of the system;  $a_I$ is 
the (extra) marginal running costs incurred by cyber-infected nodes in the system; $a_m^I $ and $a_m^S$ denote the marginal costs associated with proactive risk management for cyber-infected 
nodes and susceptible (functional and threat-free) nodes, respectively;  $a_r$ denotes the marginal 
costs generated by reactive risk mitigation (intervention) to enhance the recovery rate in the system.
The Policy Improvement Algorithm (PIA) for solving \eqref{eq:HJB} on $\mathcal{O}$ is given in Algorithm \ref{PIA}.
    \begin{algorithm}[H]
\caption{Policy improvement algorithm}
\label{PIA}
\begin{minipage}{0.9\linewidth}
\begin{algorithmic}[1]
\State \textbf{Initialize}: Set spatial discretizations $\{x_k\}_{k=1}^N$ with $N$ grid points of $[\underline{x},\overline{x}]$. Choose an initial guess of control $ (\eta^0, \rho^0) \in \mathcal{U}_0$, constant for each $x_k$ 
\Repeat
    \State Solve the Bellman--type ODE:
    \begin{equation}\label{pia}
        \frac{1}{2} \sigma^2(x) D_{xx} v^{n}(x) + b(x,\eta^n,\rho^n) D_x v^{n}(x) + f(x,\eta^n,\rho^n) - \delta v^{n}(x) = 0, \quad x \in [\underline{x}, \overline{x}].
    \end{equation}

    \State Control update by the first-order conditions \footnote{\noindent This is straightly achieved by solving a convex optimization problem. In other words, minimize a quadratic Hamiltonian over a convex and compact set. }
    \[
    \eta^{n+1}(x) =
    \begin{cases}
        1 - \frac{(\alpha + 2\beta x)(1 - x) D_x v^{n}}{2 \left[(a_m^I - a_m^S)x + a^S_m + \beta x (1 - x) D_x v^n\right]} & \text{if in } [0, 1], \\
        0 & \text{if below 0}, \\
        1 & \text{otherwise}.
    \end{cases}
    \]
    \[
    \rho^{n+1}(x) =
    \begin{cases}
        \frac{D_x v^{n}}{2 a_r x} & \text{if } D_x v^{n} > 0, \\
        0 & \text{otherwise}.
    \end{cases}
    \]
\Until The normalized $L^2$-norm between two nearest iterations 
\[
    \frac{\|v^{n+1} - v^n\|_{2}}{\sqrt{N}} < \epsilon.
\]
\State \textbf{return:} $v^n$ and $(\eta^{n+1},\rho^{n+1})$.
\end{algorithmic}
\end{minipage}
\end{algorithm}

We shall remark that to solve the Bellman-type ODE \eqref{pia} during each iteration step of Algorithm \ref{PIA}, we need to impose boundary conditions which are not available in our problem\footnote{The conventional comparison principle states that the value function is actually the solution to ODE located on $[\underline{x}, \overline{x}]$ as well.}. Hence, we impose an approximated Dirichlet condition at the left boundary ($\underline{x}$) and a Neumann condition at the
right boundary ($\overline{x}$) based on the conventional Monte-Carlo simulation method\footnote{Any
$n$th-order ordinary differential equation can be reduced to an equivalent system first-order ODEs. If the corresponding vector field $\begin{bmatrix}
D_x v^n \\
\displaystyle \frac{2}{\sigma^2}
\left(\delta v^n - f(,\eta^n,\rho^n) - b(,\eta^n,\rho^n) D_x v^n\right)
\end{bmatrix}
$ is locally Lipschitz, the associated initial value problem is well-posed and admits a unique solution (see chapter 1.6 of \cite{CoddingtonLevinson1955}). Consequently, a two-point boundary value problem may be equivalently formulated as a shooting problem by treating the missing initial condition as a free parameter.} for the performance function \eqref{eq:performfunc} under the current control strategy (extrapolated for all $x\in(0,1)$) at each iteration step.  We can show (numerically) in the Remark \ref{re:MCintervals} below that the value functions (subject to possible simulation errors on the boundaries) are insensitive to the choices of the closed interval $[\epsilon,1-\epsilon]$ as long as $\epsilon$ is sufficently close to zero (see Table \ref{tab:placeholder}).

For completeness, we present the convergence theorem of the Algorithm~\ref{PIA} in Theorem \ref{them:convegencePIA}, together with the policy improvement theorem (Corollary \ref{them:PI_infinite}) and algorithm stability under perturbations to the solution of the Bellman-type ODE \eqref{pia} (Corollary \ref{them:exp_convergence_infinite}).  For notation simplicity, we write $u = (\eta,\rho)\in U$, where $U=[0,1]\times[0,\infty)$ is the action space of our cyber risk control problem \eqref{eq:performfunc},  $b^{u}(x) = b(x,\eta,\rho)$, $f^u(x)=f(x,\eta,\rho)$, which are the drift term and cost function in \eqref{eq:HJB}, and $\sigma^{-1}(x)=1/\sigma(x)$.

\begin{assumption}\label{a3}
For each fixed $ (x, z)\in (0, 1)\times \mathbb R$, we assume the function
$$u(x, z) :=\argmin_{u\in U} \left\{\big(b^{u}\sigma^{-1}\big)(x)z + f^{u}(x)\right\}, $$ is measurable.  
\end{assumption}
 One can refer to \cite{kerimkulov2020exponential} for a short discussion on the validity of the above measurability. 

\begin{assumption}\label{a4}
There are constants $K, \theta\ge 0$ such that the following hold: 
    \begin{enumerate}
        \item[(i)]For $x\in (0, 1)$, $u, u'\in U$, 
        $$|b^{u}(x) - b^{u'}(x)|\leq \sqrt{\theta}|u- u'|, $$ and for all $x\in (0, 1)$, $u\in U$, we have $$|\big(b^{u}\sigma^{-1}\big)(x)|< K.$$
        \item[(ii)] For all $x, x'\in (0, 1), z, z'\in \mathbb R$, and $u\in  U$ we have that
        \begin{align*}
         |u( x, z) - u( x', z)|&  \leq K|x - x'|,\\
            |u(x, z) - u(x, z')| & \leq \sqrt{\theta}|z - z'|.
        \end{align*}
        \item[(iii)] For $x\in (0, 1)$, $u, u'\in U$, 
        $$|f^u(x) - f^{u'}(x)|\leq \sqrt{\theta}|u - u'|.$$
    \end{enumerate}
\end{assumption}

\begin{theorem}\label{them:convegencePIA} 
    Assume Assumptions~\ref{a2},~\ref{a3}, and~\ref{a4} hold. Let $v \in C^1(\mathcal O)$ be the (viscosity) solution to the HJB equation~\eqref{eq:HJB} on $\mathcal{O}$, and let $(v^n)_{n\in \mathbb N} \in C^2(\mathcal O)$ be the sequence of smooth approximations generated by Algorithm~\ref{PIA}. Then there exists $q \in (0,1)$ and the initial guess $v^0$, such that for all $x \in \mathcal O$ there is a constant $C=C(x,\delta)$ ($\delta$ is a given discounting rate) satisfying  
    \begin{equation}\label{eq:converge}
           |v(x) - v^n(x)|^2 \leq C(x,\delta) q^n. 
    \end{equation}
\end{theorem}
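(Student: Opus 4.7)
My plan is to follow the BSDE-based strategy of \cite{kerimkulov2020exponential} but adapt it to the infinite-horizon discounted setting on the bounded domain $\mathcal O$. The first step is to recast each iterate $v^n$ as the (unique) $Y$-component of an infinite-horizon BSDE: under the frozen control $u^n=(\eta^n,\rho^n)\in\mathcal U_0$, the function $v^n$ satisfying \eqref{pia} admits the probabilistic representation
\begin{equation*}
v^n(x)=\mathbb E_x\!\left[\int_0^{\tau}e^{-\delta t}f^{u^n}(X_t^{u^n})\,\dif t+e^{-\delta\tau}v^n(X_\tau^{u^n})\right],
\end{equation*}
where $\tau$ is the first exit time of $X^{u^n}$ from $\mathcal O$; similarly, the viscosity solution $v$ admits a representation with the optimal Hamiltonian driver $H(x,v,v_x,v_{xx})$. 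The discount factor $e^{-\delta t}$ together with Assumption~\ref{a2} guarantees integrability, so the BSDE is well-posed on $[0,\infty)$ and standard a priori estimates in weighted $L^2$-norms hold.

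The second step is a policy-improvement inequality (this is what Theorem \ref{them:PI_infinite} is meant to supply). Applying It\^o's formula to $e^{-\delta t}(v^n-v^{n+1})(X_t^{u^{n+1}})$ and using that $u^{n+1}$ is the pointwise minimiser of the Hamiltonian evaluated at $v^n$ (i.e.\ the first-order conditions in Algorithm~\ref{PIA}), one obtains $v^{n+1}\le v^n$ on $\mathcal O$, so $(v^n)$ is a monotone sequence bounded below by $v$. The key quantitative ingredient is the strong convexity of the running cost $f$ in $(\eta,\rho)$ (which follows from the quadratic form \eqref{eq:costf}), giving a pointwise lower bound of the type
\begin{equation*}
H(x,v^n,v^n_x,v^n_{xx})-H^{u^{n+1}}(x,v^n,v^n_x,v^n_{xx})\ge \kappa\bigl(|\eta^n-\eta^{n+1}|^2+|\rho^n-\rho^{n+1}|^2\bigr)
\end{equation*}
for some $\kappa>0$. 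This is the mechanism that turns policy improvement into geometric contraction.

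The third step is the contraction estimate itself. Writing $\delta v^n:=v^n-v$ and $\delta u^n:=u^n-u^*$ (where $u^*$ is the optimal feedback generated from $v$), I would combine the two BSDE representations to get, after It\^o expansion and use of the Lipschitz bounds in Assumption~\ref{a1},
\begin{equation*}
\delta\,\mathbb E_x\!\left[\int_0^{\tau} e^{-\delta t}|\delta v^n(X_t)|^2\dif t\right]\le \frac{1}{\kappa}\,\mathbb E_x\!\left[\int_0^{\tau}e^{-\delta t}|\delta u^n(X_t)|^2\dif t\right] - (\text{improvement gain}),
\end{equation*}
which, iterated together with the policy improvement inequality, yields a geometric decay $\|v-v^n\|\le q^{n/2}\|v-v^0\|$ in the weighted norm, provided $\delta$ is large enough relative to the Lipschitz constants $K_1,K_2$ so that the contraction factor $q\in(0,1)$. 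Evaluating at a fixed $x\in\mathcal O$ and using the Lipschitz regularity of $V$ from Proposition~\ref{pp1} then gives the pointwise bound \eqref{eq:converge}, with $C(x,\delta)$ collecting the initial gap and the weighted-to-pointwise constant.

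The main obstacle will be controlling the unknown boundary contribution $e^{-\delta\tau}v^n(X_\tau^{u^n})$ in the BSDE representation, since, unlike \cite{kerimkulov2020exponential}, we do not have a given terminal datum. My plan is to exploit the admissibility bound $|u_s|+|\nu_s|\le M_{u,\nu}$ together with Proposition~\ref{pp1}(ii) to show that $v^n$ is uniformly Lipschitz across iterations, and then use the Dirichlet/Neumann approximation described after Algorithm~\ref{PIA} (assumed consistent under Assumptions~\ref{a3}--\ref{a4}) to bound the boundary error by a term that is absorbed into the constant $C(x,\delta)$. A secondary technical point is verifying that the stability result (Theorem \ref{them:exp_convergence_infinite}) allows the monotone limit of the $v^n$ to be identified as a viscosity solution of \eqref{eq:HJB}; this will follow from Proposition~\ref{prop:uniqueVS} together with the uniform convergence on compact subsets of $\mathcal O$.
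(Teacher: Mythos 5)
Your overall plan (represent $v$ and $v^n$ probabilistically and compare) is in the right family, but the engine you propose for the geometric rate is not the one that works here, and as written it has a genuine gap. You derive monotonicity of $(v^n)$ and then invoke strong convexity of $f$ in $(\eta,\rho)$ to get an ``improvement gain'' $\kappa|u^n-u^{n+1}|^2$ per step. Two problems. First, strong convexity is not among Assumptions~\ref{a1}--\ref{a4} (which are purely Lipschitz-type; and even for the concrete cost \eqref{eq:costf} the $\rho$-coefficient $a_r x$ degenerates near $x=0$, while the $\eta^2$-coefficient of the Hamiltonian also involves $\beta x(1-x)v_x$). Second, and more importantly, monotonicity plus a per-step gain only tells you that $\sum_n |u^n-u^{n+1}|^2<\infty$, i.e.\ that the improvements are summable; it does not give a \emph{geometric} rate unless you additionally show that each step's improvement is a fixed fraction of the remaining gap $v^n-v$. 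Your third step asserts this iteration closes but never supplies that lower bound, so the claimed decay $q^{n/2}$ is not justified. The paper does not use Theorem~\ref{them:PI_infinite} or convexity at all in the convergence proof: the contraction comes from Lemma~\ref{lemmaa5}, a Picard-type estimate showing that the BSDE driver $F_s(z,Z)=(b^{u(X_s,z)}\sigma^{-1})(X_s)Z+f^{u(X_s,z)}(X_s)$ is Lipschitz in the ``input'' $z$ (via Assumption~\ref{a4}, since the feedback map $u(x,z)$ is Lipschitz in $z$), so that the map $Z^{n-1}\mapsto Z^n$ contracts in a weighted $\widehat{\mathbb L}^2_\gamma$-norm once the discount $\delta$ is large relative to the Lipschitz constants. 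This yields $\|Z-Z^n\|^2\leq q\|Z-Z^{n-1}\|^2$ directly, and \eqref{eq:converge} follows by iterating.

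A second structural divergence is worth flagging because it is what makes the comparison across iterations possible. You represent each $v^n$ along its own controlled trajectory $X^{u^n}$, stopped at the exit time of $\mathcal O$, and then treat the unknown boundary datum as ``the main obstacle.'' The paper instead evaluates \emph{all} iterates along a single reference process $X^{x,u^\ast}$ driven by the optimal control, removes the drift by Girsanov (Lemma~\ref{girsanov}), and works with genuine infinite-horizon BSDEs on $(0,1)$: the terminal condition is replaced by $e^{-\delta t}v^n(X_t)\to 0$ a.s., which holds because $X_t\in(0,1)$ for all $t$ (Proposition~\ref{prop:2.1}) and $v^n$ is bounded. This sidesteps the exit-time/boundary issue entirely and puts $Z^n$ and $Z^{n-1}$ in the same $L^2$ space so the contraction can be iterated. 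If you want to salvage your route, you would need to (i) replace the convexity mechanism by the Lipschitz-in-$z$ contraction of the driver (or prove a quantitative ``improvement is proportional to the gap'' estimate), and (ii) either adopt the single-reference-measure representation or carefully control how the norms change when the underlying process changes from $X^{u^n}$ to $X^{u^{n+1}}$.
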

\begin{proof}
The proof of Theorem \ref{them:convegencePIA} adapts the arguments in the proof of Theorem 4.1 in \cite{kerimkulov2020exponential}. At the $n$th iteration, the solution $v^n$ to the Bellman-type ODE \eqref{pia} is shown to be equivalent to the solution $Y^n$ of a corresponding BSDE, by the uniqueness property of infinite-horizon BSDEs. Meanwhile, the solution to the HJB equation \eqref{eq:HJB} is represented via a BSDE using Lemma~\ref{lemmad3}, which hinges on the BSDE comparison principle. The convergence then follows from the contraction property in Lemma~\ref{lemmaa5}. Given the technical complexity, the detailed proof and auxiliary lemmas are deferred to Appendix \ref{App:B}.
\end{proof}

Moreover, the following theorem establishes the monotone improvement property of the policy improvement algorithm.
\begin{corollary}\label{them:PI_infinite} 
    Let Assumptions~ \ref{a2}, \ref{a3}, and \ref{a4} hold, and fix $n \in \mathbb{N}$. Let $v^n$ and $v^{n+1}$ be the solutions of \eqref{pia} 
    at steps $n$ and $n+1$ of the Algorithm \ref{PIA}. Then, for all $x \in (0, 1)$, it holds that
    $$
        v^{n+1}(x) \leq v^n(x).
    $$
\end{corollary}
\begin{proof}
The proof proceeds by adapting the argument of Theorem 5.1 of \cite{kerimkulov2020exponential} and appealing to the comparison principle for infinite-horizon BSDEs (Lemma~\ref{cmpbsde} in Appendix \ref{App:B}). 
\end{proof}

To finish this subsection, we provide a stability property of the policy improvement algorithm under the perturbations to the solution of the Bellman-type ODE \eqref{pia} (note that the perturbations come from both the fact that Eq. \eqref{pia} is only solved approximately and our approximated boundary conditions). Hence, updating (with the first-order condition) the controls $\eta$ and $\rho$ at each iteration step in Algorithm \ref{PIA} is essentially performed only with the approximated solution, which can cumulate errors in the iteration. 

Let $\varepsilon$ be a set of parameters that determines the accuracy of the solution to the ODE \eqref{pia}. Let $u^n_\varepsilon$ be the policy at iteration $n$ obtained from an approximate 
solution to the ODE \eqref{pia}, let $v^n_{\varepsilon}$ be the solution of 
\begin{equation}\label{pia-n}
         \frac{1}{2}\sigma^2 D_{xx} v^n_\varepsilon(x)
    + b^{u^n_\varepsilon} D_x v^n_\varepsilon(x) 
    + f^{u^n_\varepsilon}(x)- \delta v^n_\varepsilon(x)  = 0,
    \quad x \in  [\underline{x},\overline{x}],
\end{equation}
with true boundary conditions. And let $\tilde{v}^n_{\varepsilon}$ be the approximate solution to
\begin{align*}
       \frac{1}{2}\sigma^2 D_{xx} v^n_\varepsilon(x)
    + b^{u^n_\varepsilon} D_x v^n_\varepsilon(x) 
    + f^{u^n_\varepsilon}(x)- \delta v^n_\varepsilon(x) & = 0,
    \quad x \in  (\underline{x},\overline{x}),\\ 
     v^n_\varepsilon(\underline{x}) &= m^{u^n_\varepsilon}_1(\underline{x}),\\
     D_xv^n_{\varepsilon}(\overline{x}) & =  m^{u^n_\varepsilon}_2(\underline{x}),
\end{align*}
 where  $m^{u^n_\varepsilon}_1(\underline{x})$ and $m^{u^n_\varepsilon}_2(\underline{x})$ denote the approximate values of $J(\underline{x};u^n_{\varepsilon})$ and the left derivative of $J(\cdot;u^n_{\varepsilon})$ at $\overline{x}$ respectively. 
Then, the policy function (see the definition of this function in Assumption \ref{a3}) for the next iteration step is given by
$$
    u^{n+1}_\varepsilon
    = u\bigl(x, (\sigma D_x \widetilde{v}^n_\varepsilon)(x)\bigr) 
    = \argmin_{u \in U} 
        \Bigl\{ (b^u  D_x \widetilde{v}^n_\varepsilon)(x) + f^u(x) \Bigr\}.
$$

\begin{corollary}\label{them:exp_convergence_infinite}
Let Assumptions~\ref{a2}, \ref{a3}, and \ref{a4} hold.  
Let $(v^n)_{n \in \mathbb{N}}$ be the approximation sequence given by Algorithm~\ref{PIA}.  
Let $(v^n_\varepsilon)_{n \in \mathbb{N}}$ be the approximation sequence given by \eqref{pia-n}.  
Let $u^\ast$ and $X^{x,u^\ast}$ be the optimal control process for \eqref{eq:HJB} 
and the associated controlled diffusion started from $x \in (0, 1)$, respectively.  
Assume that $D_x \tilde{v}^n_\varepsilon$ is uniformly bounded. Then there exist $q \in (0,1)$ and $ 0<\gamma<\delta$, such that 
for all $x \in (0,1)$, there exists a constant $C = C(x,\delta)$ with
$$
    |v^n(x) - v_\varepsilon^n(x)|^2 
    \leq C(x,\delta) q^n + 2 \sum_{k=1}^n q^k  \left\| 
        \Bigl(\sigma(D_x v_\varepsilon^k - D_x \tilde v_\varepsilon^k)\Bigr)(X^{x,u^\ast})
    \right\|_{\widehat{\mathbb{L}}_{\gamma-\delta}^2},
$$
where ${\widehat{\mathbb{L}}_{\gamma-\delta}^2}$ is the $\mathbb{L}^2_{\gamma-\delta}$-norm under probability measure $\widehat{\mathbb{P}}$,  see the notations introduced in Appendix \ref{App:B}.
\end{corollary}
\begin{proof}
The proof follows the same logic as Theorem 6.1 of \cite{kerimkulov2020exponential}, but replaces a key step with Lemma~\ref{lemmaa5} from Appendix \ref{App:B}. The remaining steps are analogous and thus omitted.
\end{proof}

\subsection{Benchmark example}\label{subsection:exp1}
The following Example \ref{example:4.1} presents a benchmark example in which the Algorithm \ref{PIA} is applied to numerically solve the cyber risk management and mitigation problem. It is important to note that the parameter $a^S_m$ shall be small (which is not unreasonable) to ensure the convergence of the PIA. Additionally, the initial guess for $\eta$ is set to zero. These choices are made to help obtain a smooth solution for the severely stiff ODE. Moreover, a combination of a Dirichlet condition at the left boundary and a Neumann condition at the right boundary of the computing interval $\{\underline{x},\overline{x}\}$ will be used to solve the stiff ODE~\eqref{pia}. Those are simulated by the conventional Monte-Carlo method.

\begin{example}
\label{example:4.1}
    We first provide a benchmark example for the cyber risk control problem \eqref{problem}.  Let $a_0 = 0.5, a_I = 5, a_m^I = 2.5, a_m^S = 0.5, a_r = 5$. Furthermore, we set the discount factor $\delta = 0.05 $. Additionally, let the external cyber attacks rate be \( \alpha = 0.5 \), internal contagion rate \( \beta = 0.5 \), (unassisted) recovery rate \( \gamma = 0.15 \), and the diffusion coefficient \( \sigma = 0.3 \). Finally, we set \( \underline{x} = 0.01 \), \( \overline{x} = 0.99 \), the grid points $N=1000$, and  tolerance $\epsilon=10^{-4}$. 
    \begin{figure}[!t]
        \centering
         \begin{subfigure}{0.45\textwidth}
         \includegraphics[width=\textwidth]{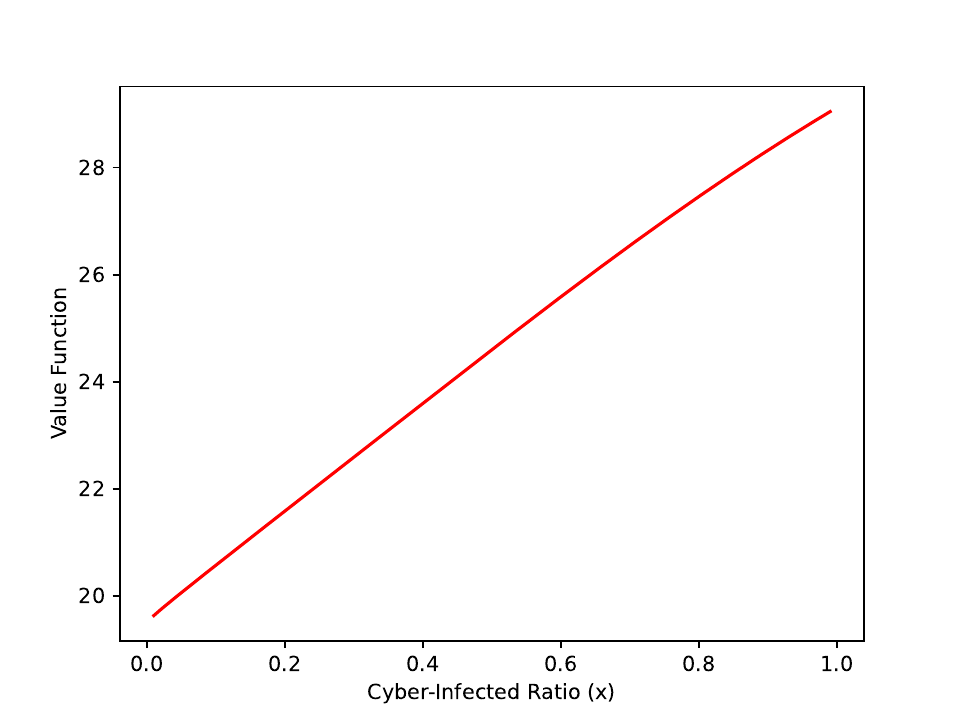}
        \caption{Value function}
        \label{fig:valueexp1}
         \end{subfigure}
        \begin{subfigure}{0.45\textwidth}
        \includegraphics[width=\textwidth]{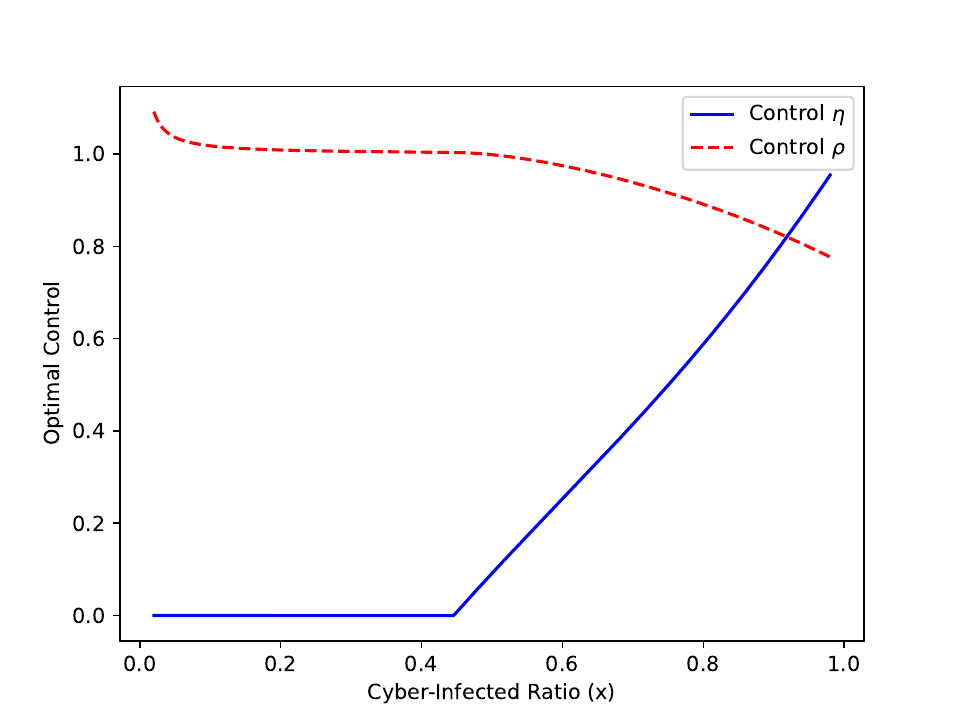}
        \caption{Optimal control strategies}
        \label{fig:controlexp1}
        \end{subfigure}
        \\
        \begin{subfigure}{0.45\textwidth}
          \includegraphics[width = \textwidth]{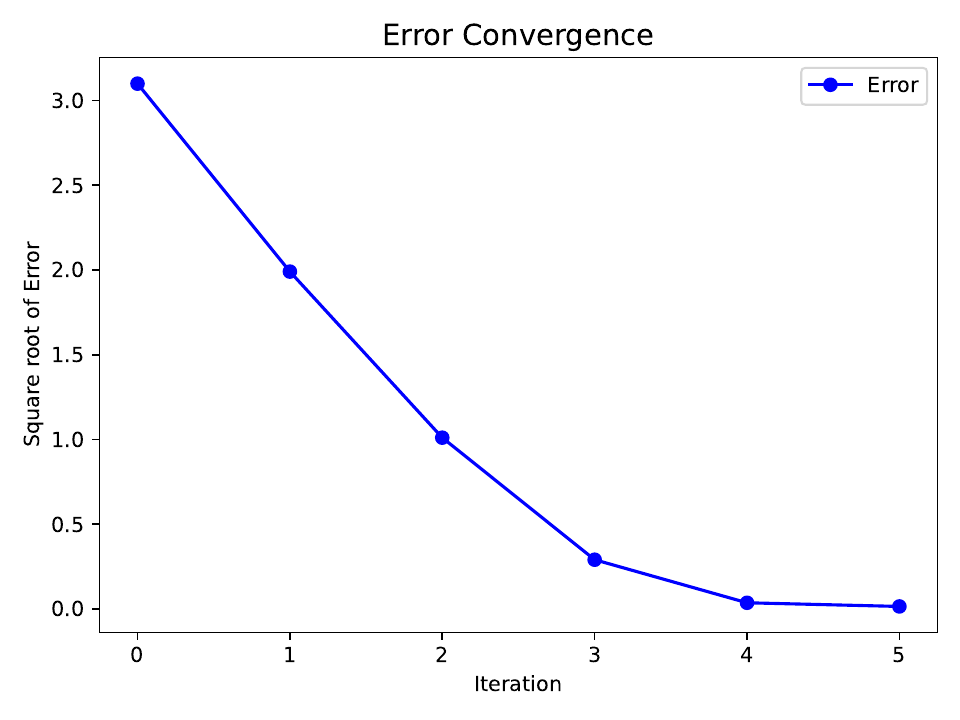}
        \caption{Convergence of the errors in the PIA.}
        \label{fig:errorexp1}  
        \end{subfigure}
        \caption{Results of benchmark example \ref{example:4.1}}
        \label{fig:example4.1}
    \end{figure}
    \end{example}
The results of the optimal strategy and value function are given in Figures \ref{fig:controlexp1} and \ref{fig:valueexp1}, respectively; we also provide the convergence 
of the errors (in terms of normalized \(L^2\)-norm of the difference between two 
successive value function), which shows the computational efficiency of the algorithm, see 
Figure \ref{fig:errorexp1}. The algorithm converges with an error less than $10^{-4}$ within 
eight steps starting from the initial guess $\eta^0 = 0$ and $\rho^0 = 0$. 
To better understand the behavior of the optimal control $(\eta^*, \rho^*)$, we observe 
that $\eta^*$ remains at zero (strong proactive control) when the ratio of cyber-infected 
nodes is considerably small, while $\rho^*$ decays fast from its maximum. This pattern 
indicates that when the current system has a small ratio of cyber-infected nodes, applying 
the risk mitigation control to enhance the recovery rate is less effective than 
implementing proactive management control to prevent the internal contagions and external cyber attacks. Consequently, the 
optimal strategy prioritizes risk management over risk mitigation at the outset. 

Furthermore, Figure \ref{fig:controlexp1} shows that both the optimal controls $\eta^*$ (cyber risk management) and $\rho^*$ (cyber risk mitigation) decrease as the ratio of cyber-infected nodes $x$ increases. In particular, we have the following observations:
    \begin{itemize}
        \item When the cyber-infected ratio is low (i.e., $x\approx 0$), both controls are set high, which reflects a strong incentive for the decision maker to invest in cyber risk prevention and mitigation in an early stage.
        \item As the cyber-infected ratio rises ($x\uparrow 1$), both controls decline. This suggests that once the system is already heavily compromised, additional investments in prevention or mitigation have diminishing impact on reducing risk, especially for the proactive risk management control $\eta$. Hence, the decision maker may want to prioritize resource allocation toward risk mitigation strategies.
    \end{itemize}
    The above observations align with the intuition of cyber risk control under limited resources: it is most effective to intervene early, when cyber-infected ratios are still small; while intervention becomes less valuable (and less cost-effective) when the system is already in a state of widespread infection. 

\begin{remark}
\label{re:MCintervals}
For the robustness, in the following Table \ref{tab:placeholder}, we show the sensitivity of the value function (evaluated at few fixed interior points $x = \{0.1, 0.3, 0.5, 0.7, 0.9\}$) to the choices of the closed interval $\mathcal{O}= [\epsilon, 1-\epsilon]$ for $\epsilon = 0.05, 0.01, 0.005, 0.0025, 0.001$.
\begin{table}[ht!]
    \centering
    \begin{tabular}{lccccc}
\toprule[1pt]
 $[\varepsilon,1-\varepsilon]$ 
& $V(0.1)$ & $V(0.3)$ & $V(0.5)$ & $V(0.7)$ & $V(0.9)$ \\
\midrule[0.5pt]
$\underline{[0.05,\,0.95]}$   
& 20.5470 & 22.5586 & 24.5761 & 26.5137 & 28.3010 \\
$[0.01,\,0.99]$   
& 20.5753 & 22.5958 & 24.6048 & 26.5423 & 28.3227 \\
$[0.005,\,0.995]$ 
& 20.5856 & 22.6062 & 24.6056 & 26.5533 & 28.3340 \\
$[0.0025,\,0.9975]$ 
& 20.5962 & 22.6164 & 24.6158 & 26.5634 & 28.3441 \\
$[0.001,\,0.999]$ 
& 20.6060 & 22.6218 & 24.6269 & 26.5800 & 28.3566 \\
\bottomrule[1pt]
\end{tabular}
    \caption{Sensitivity of the value function on boundary choices}
    \label{tab:placeholder}
\end{table}

\end{remark}

\subsection{Suboptimal control analysis}

\begin{example}
\label{example4.2}
In this example, we provide numerical analysis  when we fix either the risk management control at 
zero ($\eta(\cdot) \equiv 1$) or the risk mitigation control at zero ($\rho(\cdot)\equiv0$). The resulting optimal 
strategies (of the single control) and value functions are given in Figure \ref{fig:example4.2}.
\end{example}
\begin{figure}[ht]
        \centering
        \begin{subfigure}{0.48\textwidth}
            \includegraphics[width = \textwidth]{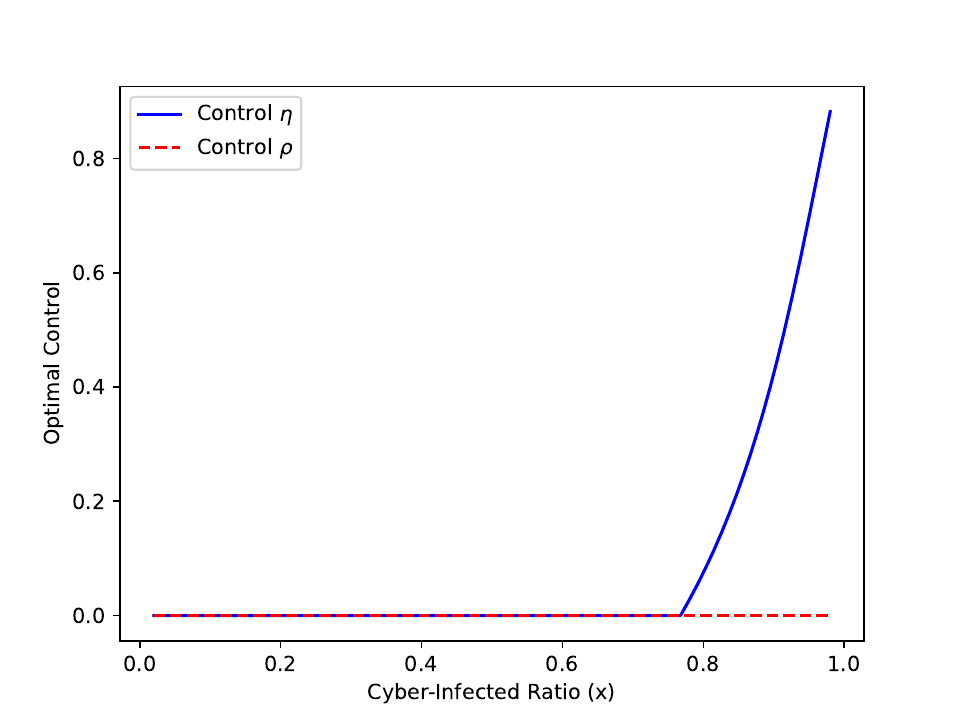}
        \caption{Optimal $\eta^*$ with $\rho\equiv0$.}
        \label{fig:etaonly}
        \end{subfigure}
        \begin{subfigure}{0.48\textwidth}
           \includegraphics[width = \textwidth]{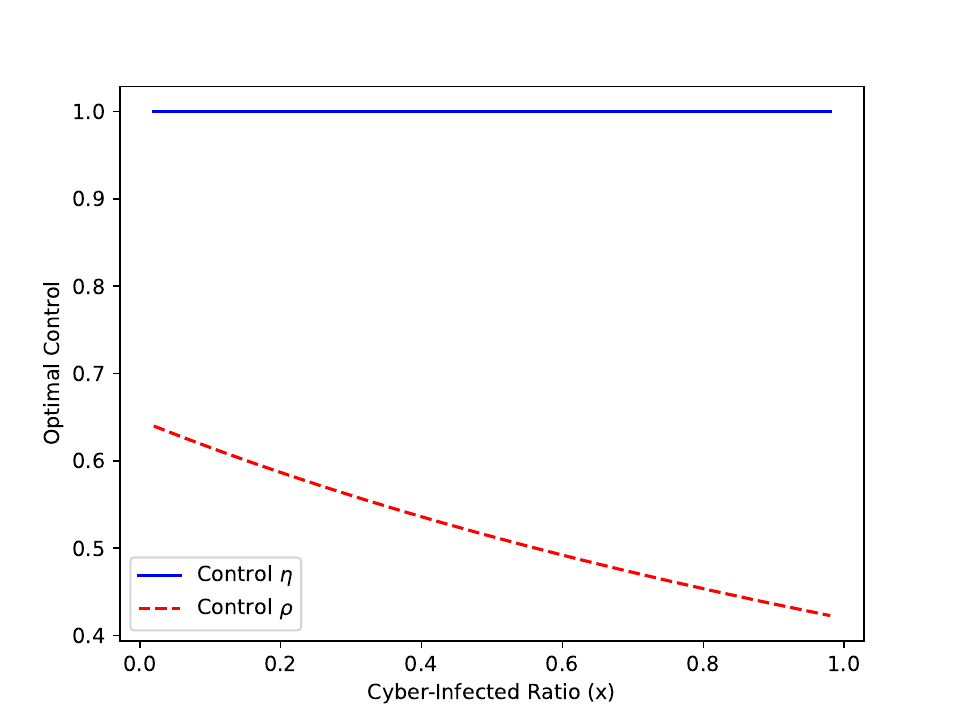}
        \caption{Optimal $\rho^*$ with $\eta\equiv 1$.} 
        \label{fig:rhoonly}
        \end{subfigure}       \\
        \begin{subfigure}{0.48\textwidth}
            \includegraphics[width = \textwidth]{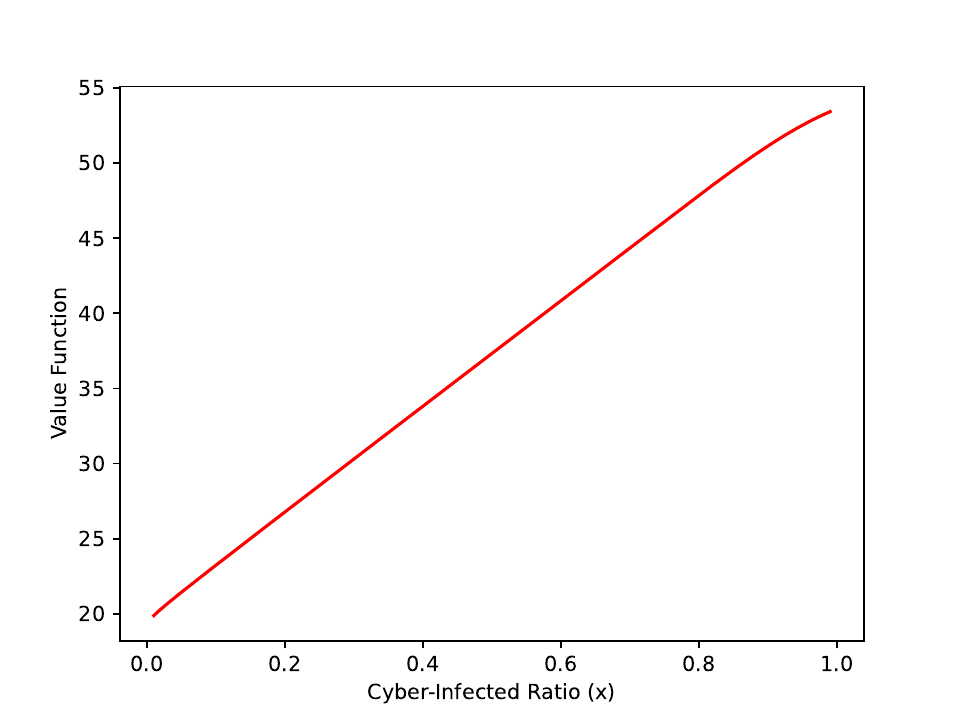}
        \caption{Value function when only risk management control is present.}
        \label{fig:valueeta}
        \end{subfigure}       
        \begin{subfigure}{0.48\textwidth}
            \includegraphics[width = \textwidth]{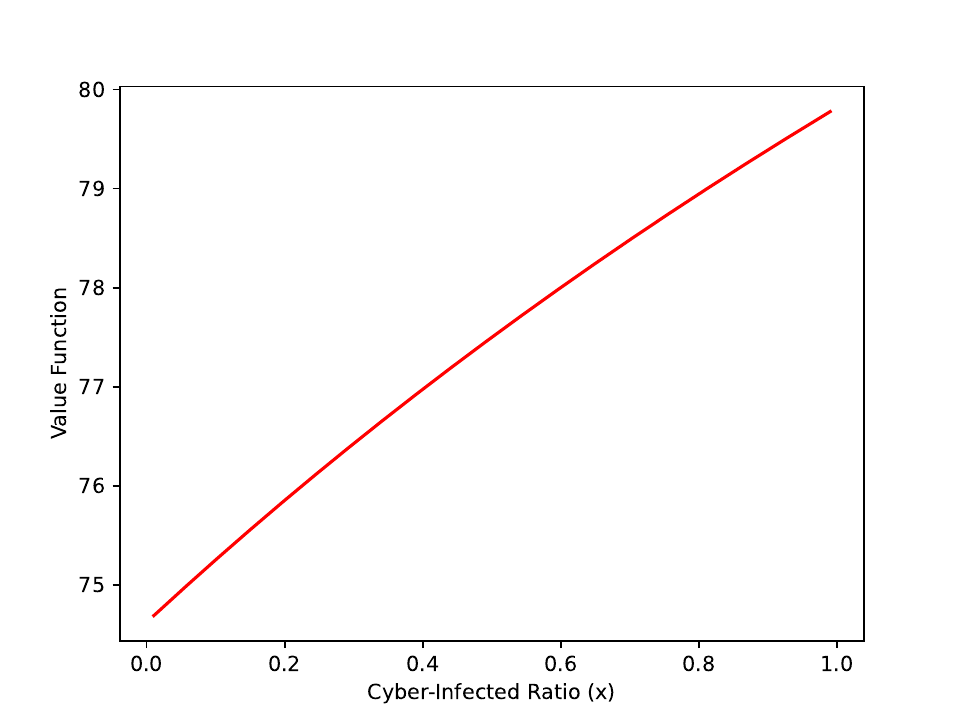}
        \caption{Value function when only risk mitigation control is present.}
        \label{fig:valuerho}
        \end{subfigure}       
        \caption{Optimal strategy and value function for suboptimal controls}
        \label{fig:example4.2}
        \end{figure}
When removing the reactive cyber risk mitigation control  (i.e., $\rho \equiv 0$), the optimal 
proactive risk management control $\eta^*$ becomes noticeably stronger (see Figure 
\ref{fig:etaonly}) compared to the benchmark example, which compensates for the absence of 
reactive mitigation controls. The value function remains nearly unchanged at small $x$, 
but rises substantially (reaching about $55$ 
versus the benchmark level of $28$) when the cyber-infected ratio is close to one, see Figure \ref{fig:valueeta}. This suggests that as cyber-infection level increases, the marginal effectiveness of risk management control drops off, and proactive risk management alone cannot effectively substitute for reactive mitigation control in a highly infected system. 

On the other hand, when we remove the risk management control (i.e., $\eta \equiv 1$), the optimal reactive control $\rho^*$ decreases relative to the benchmark (Figure \ref{fig:rhoonly}), reflecting the limited effectiveness of mitigation when proactive control from a risk management perspective is unavailable. In this case, the value function rises overall (Figure \ref{fig:valuerho}), shifting from a range between 20 to 30 under the benchmark example to a range between 75 to 80. This shows that relying solely on reactive mitigation strategies results in higher expected costs, confirming that mitigation control cannot fully substitute for proactive management.



\subsection{Sensitivity analysis}   
In this subsection, we numerically analyze the distinct roles of proactive control $\eta$ (risk management) and reactive control $\rho$ (risk mitigation) in shaping the optimal value function.
In particular, we deviate by a series of small changes (uniformly in $x$) for $\eta$ and $\rho$, respectively, from the optimal strategy $(\eta^*,\rho^*)$ in the benchmark example. The results are plotted in Figure \ref{fig:eta+}--\ref{fig:rho-}. 
    \begin{figure}[ht]
        \centering
        \begin{subfigure}{0.48\textwidth}
            \includegraphics[width = \textwidth]{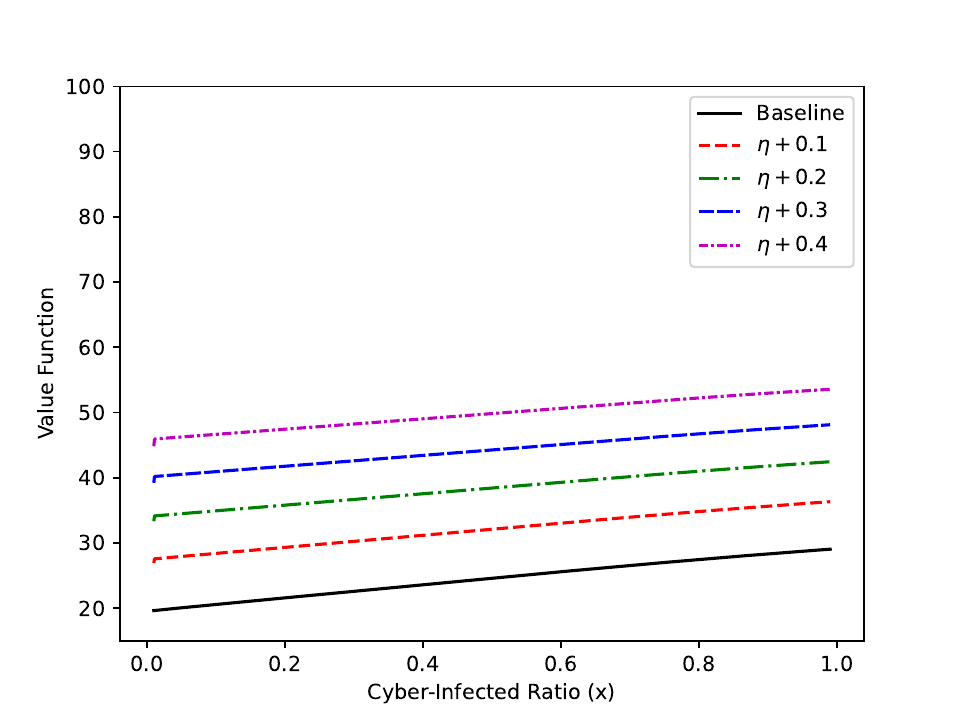}
        \caption{Value functions for positive perturbations\\ on $\eta^*$.}
        \label{fig:eta+}
        \end{subfigure}
        \begin{subfigure}{0.48\textwidth}
            \includegraphics[width = \textwidth]{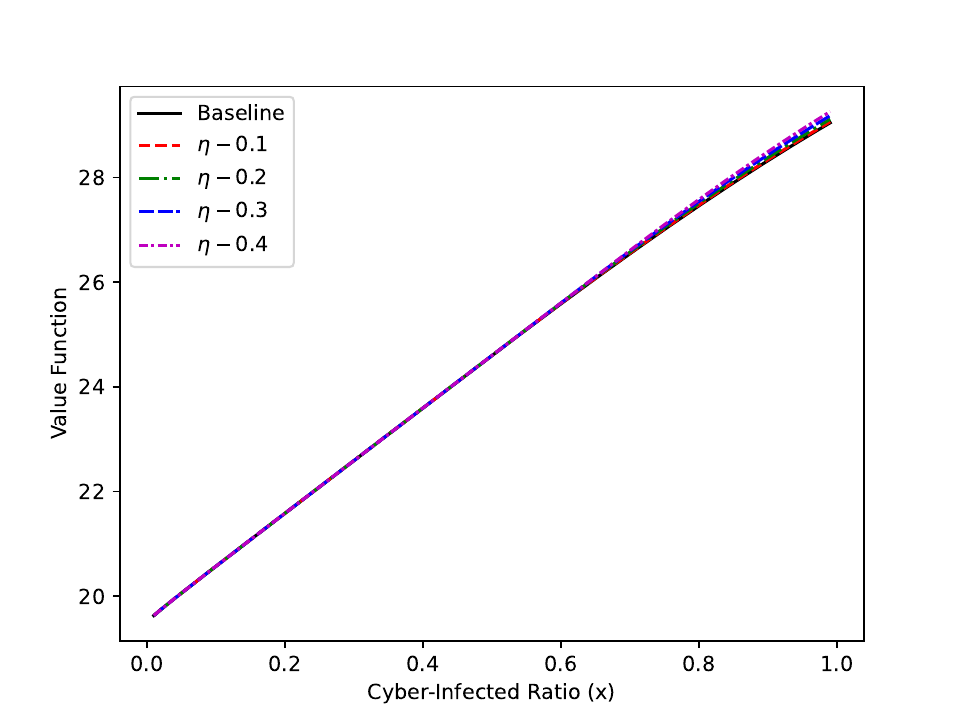}
        \caption{Value functions for negative perturbations\\ on $\eta^*$.}
        \label{fig:eta-}
        \end{subfigure}
       \begin{subfigure}{0.48\textwidth}
             \includegraphics[width = \textwidth]{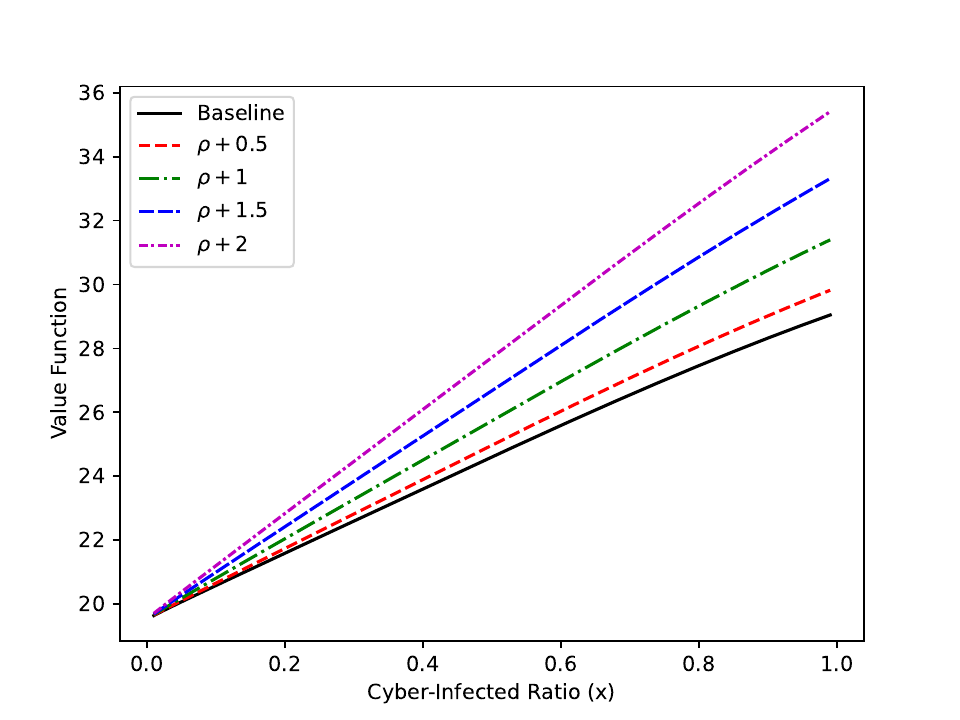}
        \caption{Value functions for positive perturbations\\ on $\rho^*$.}
        \label{fig:rho+}
       \end{subfigure}
       \begin{subfigure}{0.5\textwidth}
            \includegraphics[width = \textwidth]{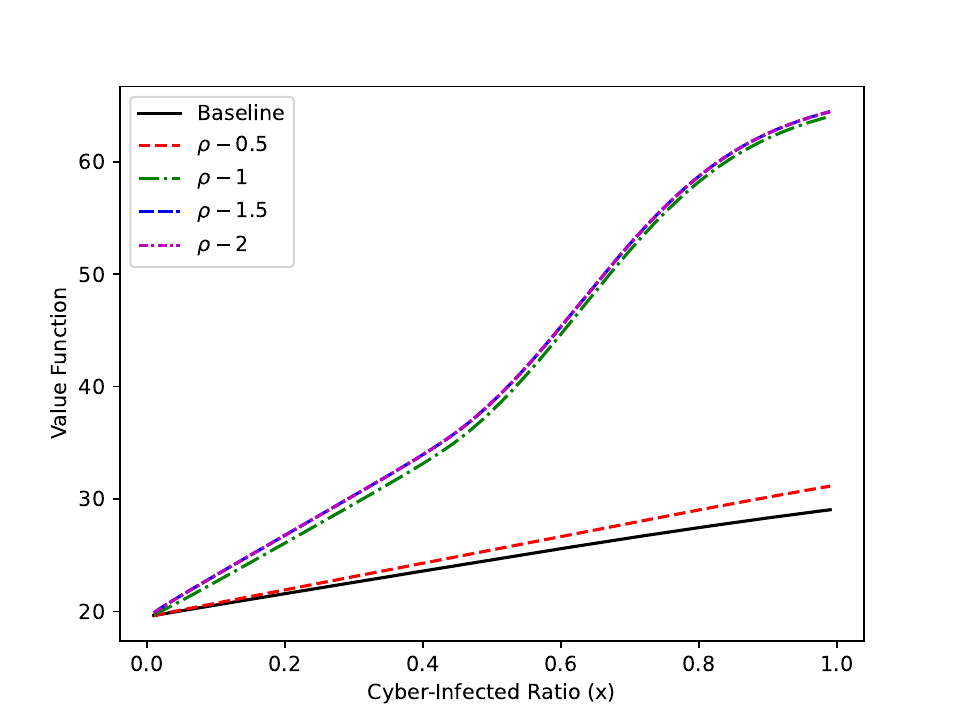}
        \caption{Value functions for negative perturbations\\ on $\rho^*$.}
        \label{fig:rho-}
       \end{subfigure}
       \caption{Sensitivity analysis on small deviations from the optimal control.}
    \end{figure}
   We conclude this subsection with the following observations:
   \begin{itemize}
       \item \textit{Proactive risk management control ($\eta$):} According to Figure \ref{fig:eta+}, increasing $\eta$ produces a substantial and nearly uniform increase in value function (higher expected discounted costs) across all initial states of cyber-infected level, confirming the consistent effectiveness of proactive measures when managing cyber risks in the system. Conversely, decreasing $\eta$ has limited effects on the magnitude of the value function (see Figure \ref{fig:eta-}). Note that, since $\eta^*(x)=0$ for small values of $x$, the decrease of $\eta$ cannot be applied in this case; hence, we do not observe any changes in the corresponding value function.  But, when the cyber-infected ratio is high, the inability to sustain strong proactive control leads to a noticeable increase in the value function (but not comparable to the corresponding scenario when increasing $\eta$). 
       
       \item  \textit{Reactive risk mitigation control ($\rho$):} Both Figures \ref{fig:rho+} and \ref{fig:rho-} show that perturbations on the value of $\rho^*$ exert their strongest influence when the system is in a status with a high cyber-infected ratio. Such an observation indicates that the reactive mitigation control is most valuable once the system contains a large number of infected nodes. In addition, unlike the risk management control, we can observe an obvious non-uniform change in the value function with respect to $x$. In particular, when we add positive perturbations to $\rho^*$ (i.e., adding redundant mitigation controls), the increase in the value function (i.e., expected discounted costs) is moderate and consistent. However, adding negative perturbations to $\rho^*$, which refers to insufficient mitigation controls, can distort the shape of the value function and sharply increase expected discounted costs. One may also conjecture (see Figure \ref{fig:rho-}) that \textit{there exists a critical ``threshold level" for cyber risk mitigation control, such that insufficient reactive mitigation control below the ``level" can cause tremendous losses.} We leave this interesting observation for future research.
       \item  \textit{Overall conclusion:} The sensitivity analysis together with the suboptimal control analysis in Example \ref{example4.2} highlight an ``asymmetry" between the two types of control strategies. Proactive risk management provides consistent and broad benefits, and can partially substitute for the absence of mitigation controls. By contrast, reactive mitigation is valuable only when the system is heavily compromised with a high ratio of cyber-infected nodes, and cannot substitute for missing proactive risk management control. In practice, this implies that effective cyber risk control strategies require front-loaded investment in proactive defense, with reactive mitigation serving as a complementary safeguard against severe system breakdown rather than a stand-alone strategy. 
       \end{itemize}


\begin{remark}\label{rmk1} 
One may notice that each ``value function" obtained (by solving the corresponding ODEs under perturbed control) in the above sensitivity analysis (Figure~\ref{fig:eta+}--\ref{fig:rho-}) 
are not necessarily the objective function $J$ given in \eqref{eq:performfunc} under perturbed control. 

In this remark, we assume the solution to the Bellman ODE is twice continuously differentiable. While the numerical solution is not necessarily twice continuously differentiable, it approximates a $C^2((0, 1))$ solution under standard regularity assumptions and sufficient discretization accuracy. If we fixed the control by each perturbation above, such as $\eta\pm \Delta \eta$ and $\rho \pm \Delta \rho$, then the control space $U$ is reduced to a singleton. As the consequence, one could apply the It\^o's formula for the discounted process $e^{-\delta t} u(X_t^{x})$ between 0 and ${T\wedge \tau_n}$ with a sequence of stopping times $\tau_n:= \inf\{t\geq 0: \int_0^{T\wedge \tau_n} e^{-\delta t}D_xu(X_t^x)\sigma(X_t^x)\dif W_t \geq n\}$, 
{\small
\begin{equation*}
    \begin{split}
        & \mathbb E \left[e^{-\delta {T\wedge \tau_n}} u\left(X_{T\wedge \tau_n}^{x}\right)\right]\\
        &= u(x) + \mathbb E\left[\int_0^{T\wedge \tau_n} e^{-\delta t}[b(X_t^x) D_x u(X_t^x) + \frac{1}{2}\sigma^2(X_t^x) D_{xx}u(X_t^x) - \delta u(X_t^x) ]\dif t \right]\\
        &\qquad  + \mathbb E\left[\int_0^{T\wedge \tau_n} e^{-\delta t}D_xu(X_t^x)\sigma(X_t^x)\dif W_t\right]\\
        &= u(x) + \mathbb E\left[\int_0^{T\wedge \tau_n} e^{-\delta t}[b(X_t^x) D_x u(X_t^x) + \frac{1}{2}\sigma^2(X_t^x) D_{xx}u(X_t^x) - \delta u(X_t^x) ]\dif t \right], 
    \end{split}
\end{equation*}
}
where denote $X^x_t:=X^{0, x, \eta, \rho}_t$, $b(X^x_t):=b^{\eta, \rho}(X^x_t)$ and note the stochastic integral is a local martingale. Sending $n$ to infinity, then 
$$\mathbb E \left[e^{-\delta {T}} u\left(X_{T}^{x}\right)\right] = u(x) + \mathbb E\left[\int_0^{T} e^{-\delta t}[b(X_t^x) D_x u(X_t^x) + \frac{1}{2}\sigma^2(X_t^x) D_{xx}u(X_t^x) - \delta u(X_t^x) ]\dif t \right]$$ holds by dominated convergence theorem. By using the fact that $u$ is a solution to the ODE stated above and then sending $T$ to infinity, we observe that 
$$u(x) =  \mathbb E\left[\int_0^{\infty} e^{-\delta t} f^{\eta, \rho}(x)\dif t \right]$$ coincides with the definition of objective function $J(x, \eta, \rho)$. 
\end{remark}


\subsection{Comparative statics}  
In this final subsection, we perform a comparative statics analysis across all model parameters, including $\alpha$, $\beta$, $\sigma$, and the marginal cost parameters ($a_I,a_m^I,a_m^S, a_r$) in the cost function given in \eqref{eq:costf}. Note that it is not necessary to investigate the parameter $\gamma$, which contributes additively to the mitigation control $\rho$ in numerical results.

 \textbf{Comparative statics for} \bm{$\alpha$} \textbf{and} \bm{$\beta$}. We first compare different values of the external cyber attack rate $\alpha$ and the internal contagious rate $\beta$, keeping other parameters unchanged in Example \ref{example:4.1}. The resulting optimal strategies are given in Figures \ref{fig:compare_eta_alpha}, \ref{fig:compare_rho_alpha} (for $\alpha$) and \ref{fig:compare_eta_beta}, \ref{fig:compare_rho_beta} (for $\beta$), respectively.

    \begin{figure}[ht]
        \centering
        \begin{subfigure}{0.48\textwidth}
            \centering
            \includegraphics[width=\textwidth]{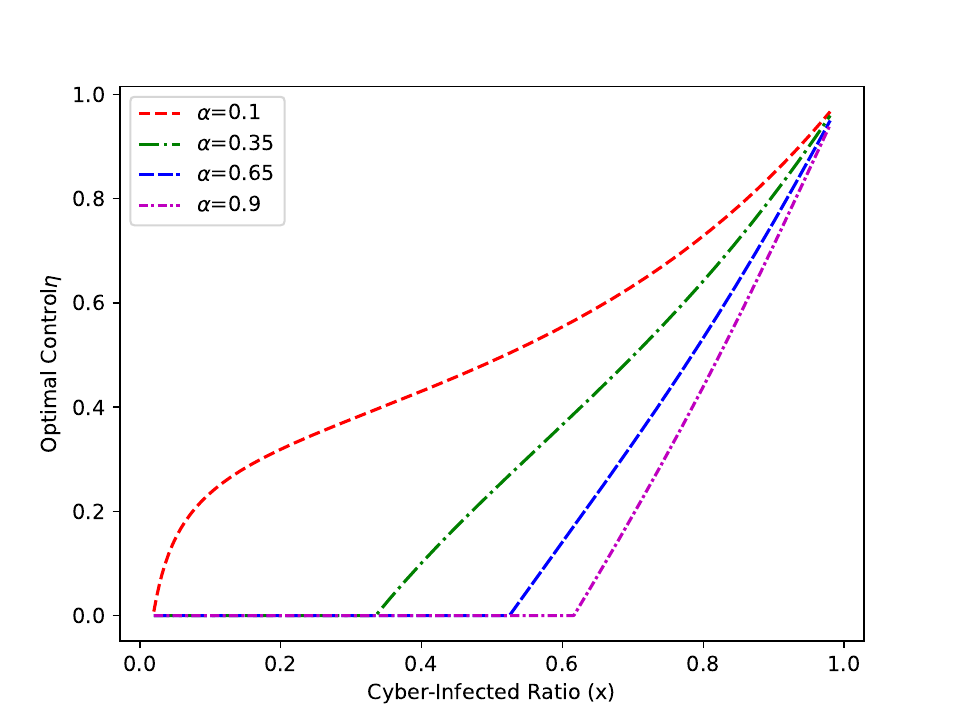}
            \caption{Optimal $\eta^*$ for different $\alpha$}
            \label{fig:compare_eta_alpha}
        \end{subfigure}
        \hfill
        \begin{subfigure}{0.48\textwidth}
            \centering
            \includegraphics[width=\textwidth]{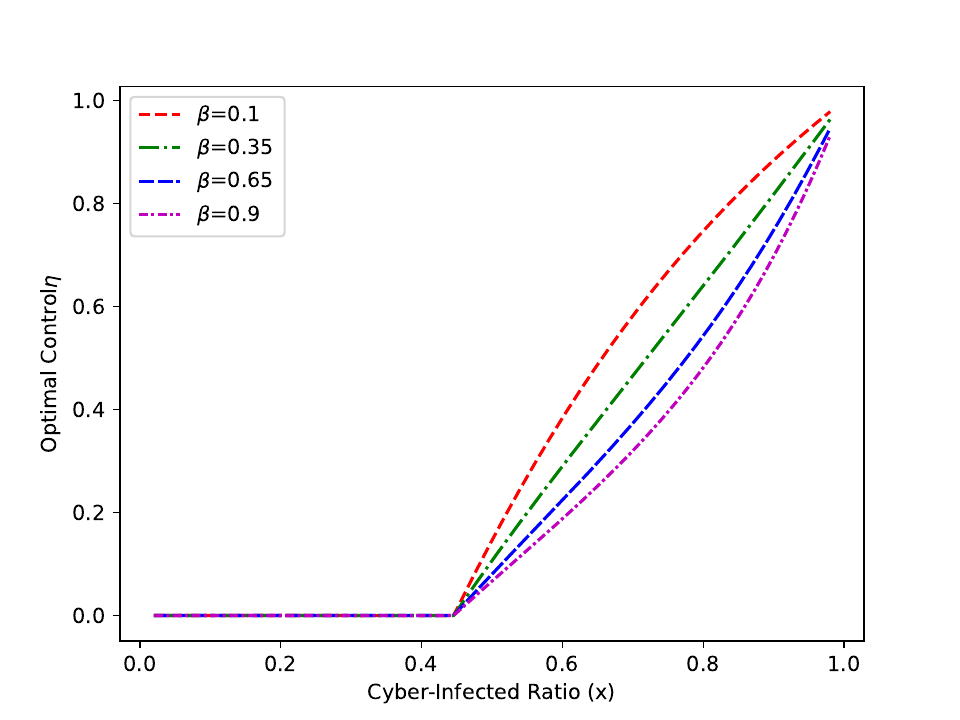}
            \caption{Optimal $\eta^*$ for different $\beta$}
            \label{fig:compare_eta_beta}
        \end{subfigure}
        \begin{subfigure}{0.48\textwidth}
            \centering
            \includegraphics[width=\textwidth]{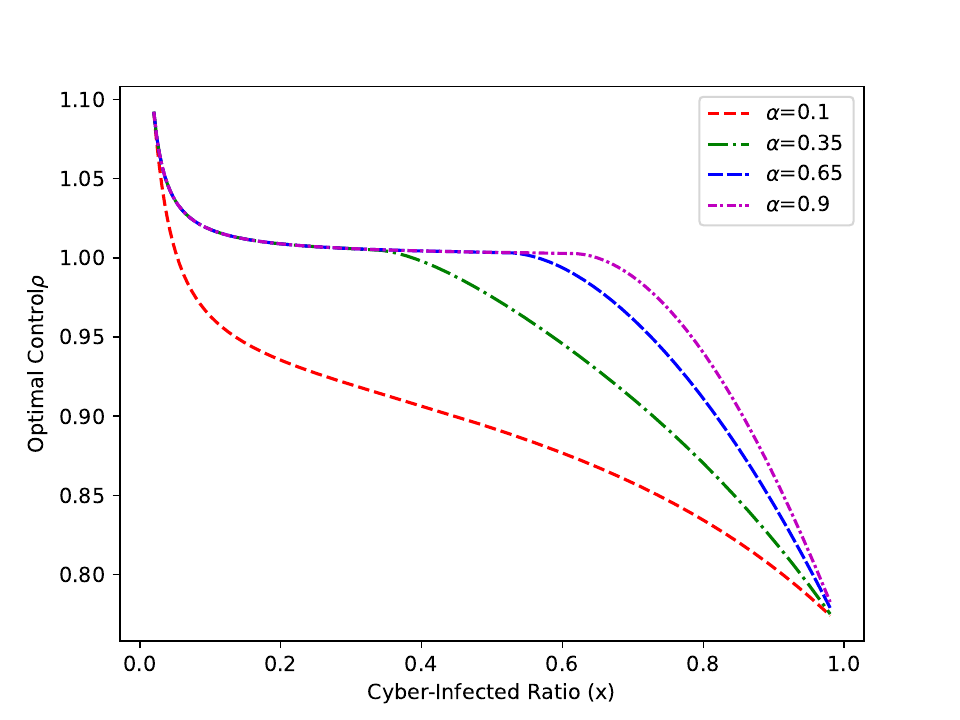}
            \caption{Optimal $\rho^*$ for different $\alpha$}
            \label{fig:compare_rho_alpha}
        \end{subfigure}
        \hfill
        \begin{subfigure}{0.48\textwidth}
            \centering
            \includegraphics[width=\textwidth]{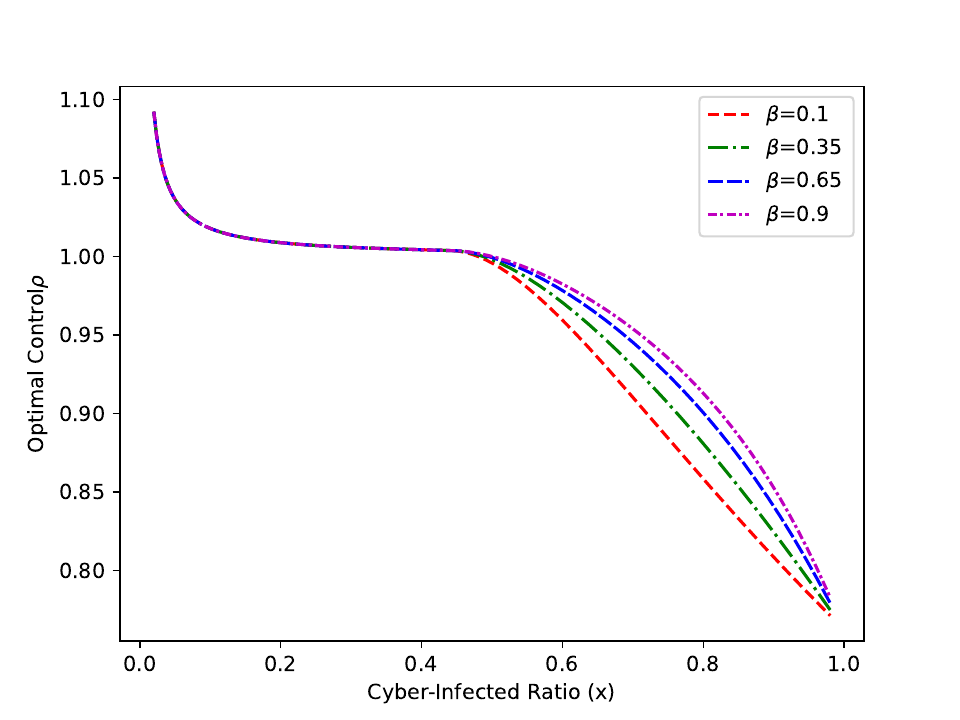}
            \caption{Optimal $\rho^*$ for different $\beta$}
            \label{fig:compare_rho_beta}
        \end{subfigure}
        \caption{Comparison analysis for $\alpha$(left) and $\beta$(right).}
        \label{fig:comparison-ab}
    \end{figure}
Figures \ref{fig:comparison-ab} show that an increase in both $\alpha$ (the rate of external cyber attacks) and $\beta$ (the rate of internal cyber risk propagation) requires stronger risk management and mitigation controls, with the optimal proactive management $\eta^*$ moving closer to $0$ and the optimal reactive mitigation $\rho^*$ rising. This reflects that it is optimal to simultaneously reinforce both preventive measures and reactive responses when cyber risks escalate. The impact of $\alpha$ is more pronounced than that of $\beta$, leading to sharper adjustments in 
both controls. In other words, $\eta^*$ exhibits a larger flatten zero-valued region and $\rho^*$ rises for almost all states with $\alpha$ increased. However, when $\beta$ increases, both optimal strategies are strengthened for highly infected states, although the curvature of $\eta^*$ becomes convex whereas that of $\rho^*$ becomes concave. From a cyber risk perspective, this distinction is natural: a surge in external attacks compels the decision maker to rapidly escalate defensive risk management measures and amplify mitigation controls. In contrast, internal risk propagation dynamics among susceptible and infected nodes induce a more gradual--though less pronounced--reinforcement of the two controls. Such a result thus demonstrates the necessity of preferential allocation of resources to the management of external cyber attacks.

\textbf{{Comparative statics for} \bm$\sigma$.} We compare different values of the volatility parameter $\sigma$ in the stochastic system, assuming other parameters remain the same as in Example \ref{example:4.1}.
Figure \ref{fig:compare_sigma} illustrates the impact of increasing 
the volatility parameter $\sigma$ on the optimal control $(\eta^*,\rho^*)$. As $\sigma$ rises from $0.1$ to $0.5$, $1$, and $2$, both the proactive management $\eta$ and the reactive 
mitigation $\rho$ weaken slightly. Intuitively, higher volatility increases uncertainty in the evolution of the cyber-infected ratio in the system, making aggressive interventions less effective. 
The small magnitude (compared with the cases when changing $\alpha$ and $\beta$) of the change indicates that the control policy is primarily driven by the system's drift dynamics, with stochastic fluctuations playing a secondary role. 
\begin{figure}[H]
    \centering
\begin{subfigure}{0.48\textwidth}
            \centering
            \includegraphics[width=\textwidth]{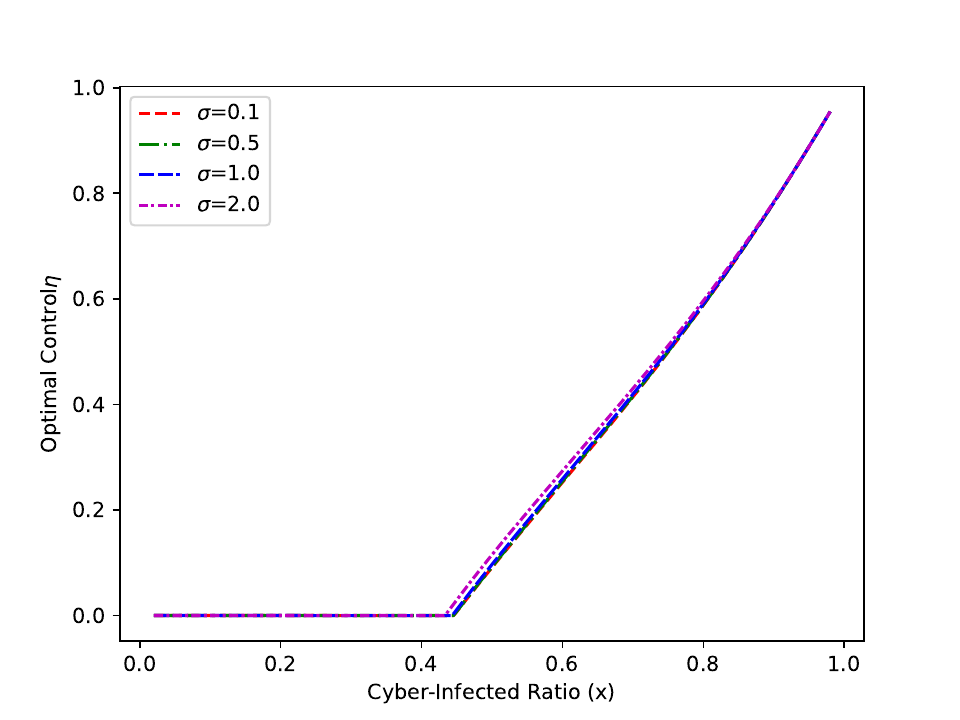}
            \caption{Optimal $\eta^*$ for different $\sigma$}
            \label{fig:compare_eta_sigma}
        \end{subfigure}
        \hfill
        \begin{subfigure}{0.48\textwidth}
            \centering
            \includegraphics[width=\textwidth]{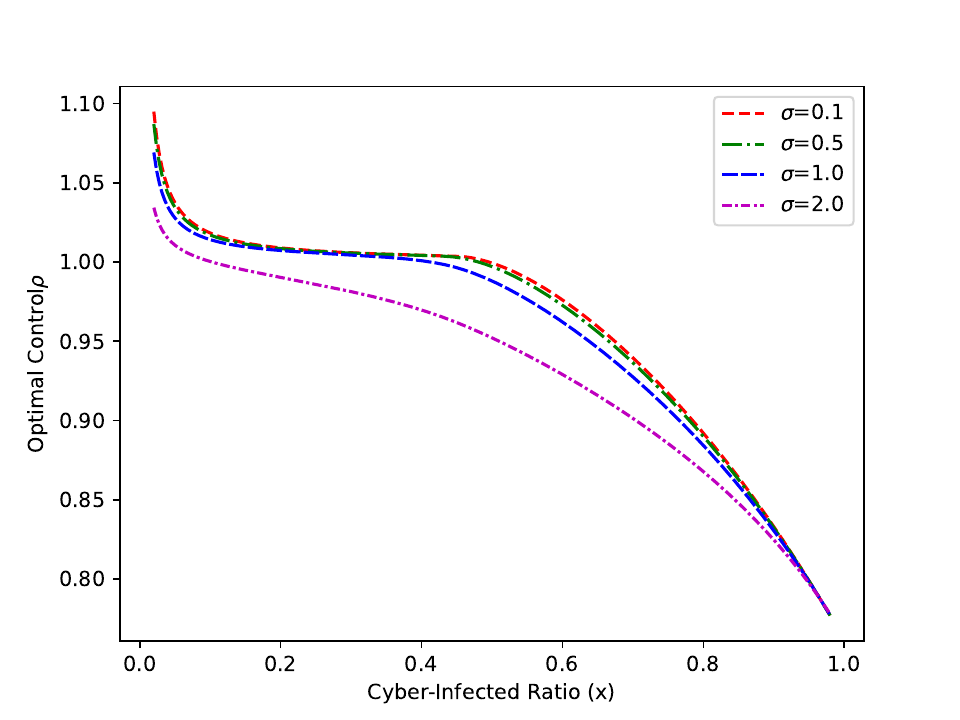}
            \caption{Optimal $\rho^*$ for different $\sigma$}
            \label{fig:compare_rho_sigma}
        \end{subfigure}
        \caption{Comparison analysis for $\sigma$.}
    \label{fig:compare_sigma}
\end{figure}
    
\textbf{Comparative statics for $a_m^I$ and $a_m^S$.} We further compare different values of the marginal costs associated with cyber risk management for cyber-infected nodes and susceptible nodes, respectively. We plot the resulting optimal strategies in Figure \ref{fig:comparison-am}.

 \begin{figure}[ht]
        \centering
        \begin{subfigure}{0.48\textwidth}
            \centering
            \includegraphics[width=\textwidth]{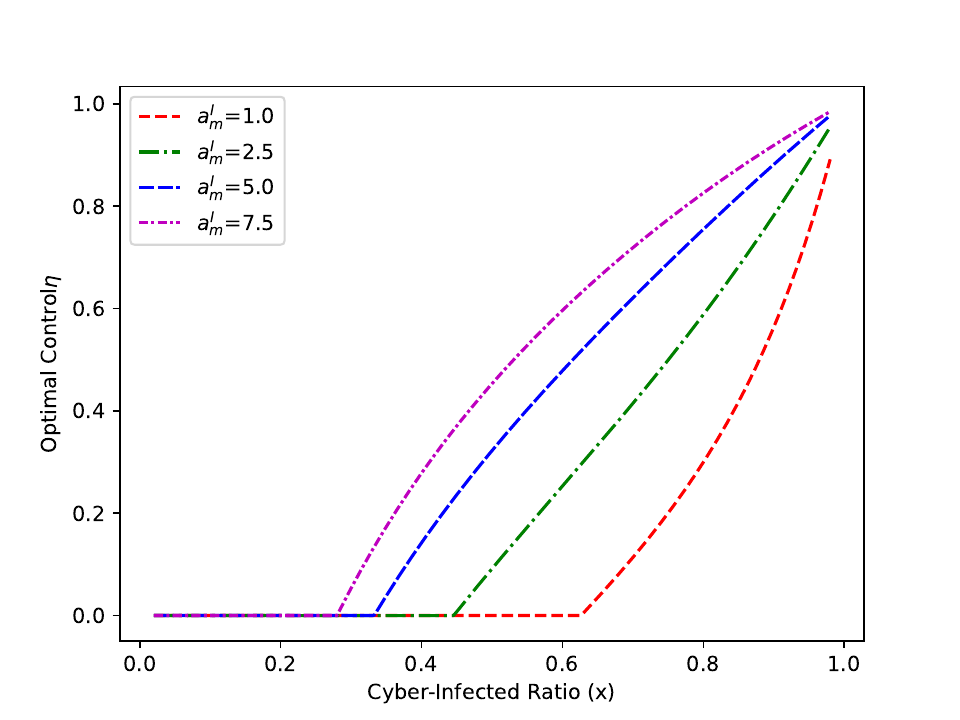}
            \caption{Optimal $\eta^*$ for different $a^I_m$.}
            \label{fig:compare_eta_aIm}
        \end{subfigure}
        \hfill
        \begin{subfigure}{0.48\textwidth}
            \centering
            \includegraphics[width=\textwidth]{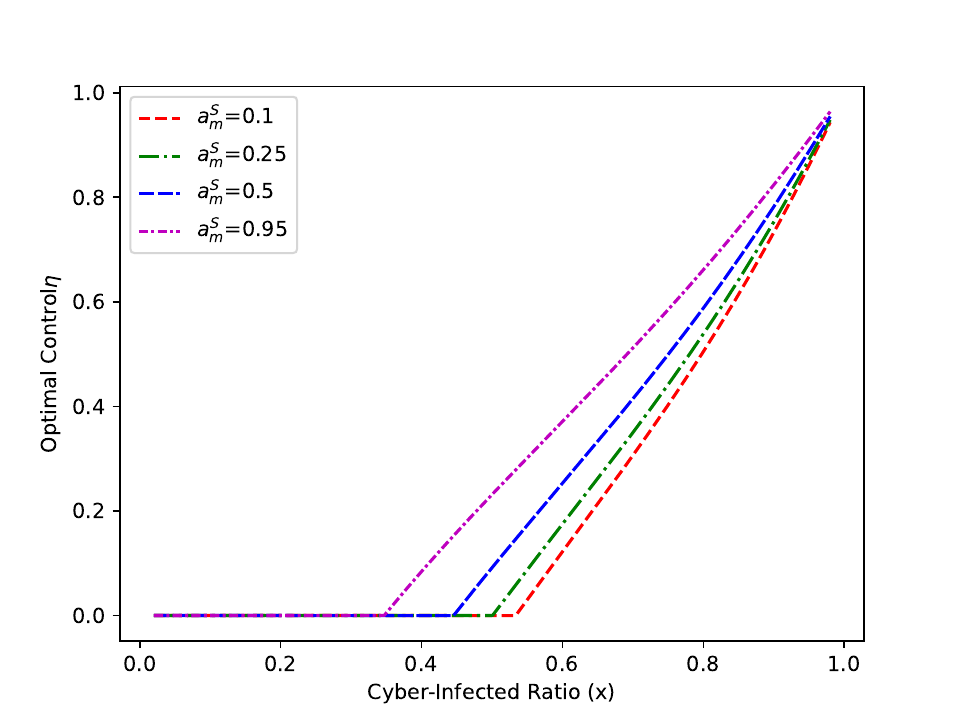}
            \caption{Optimal $\eta^*$ for different $a^S_m$.}
            \label{fig:compare_eta_aSm}
        \end{subfigure}
         \begin{subfigure}{0.48\textwidth}
            \centering
            \includegraphics[width=\textwidth]{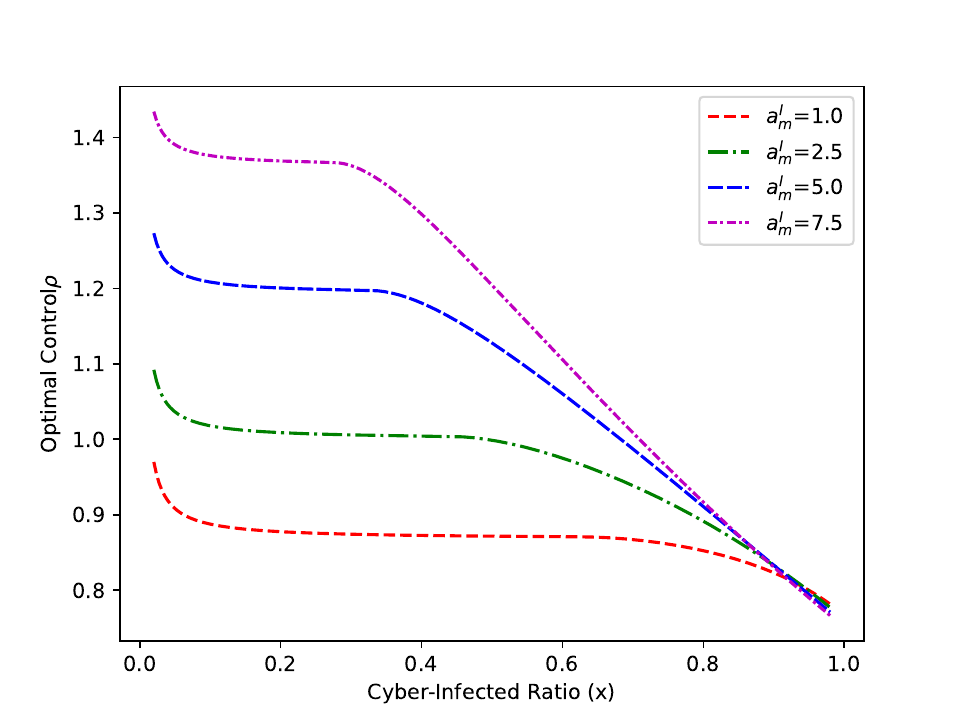}
            \caption{Optimal $\rho^*$ for different $a^I_m$.}
            \label{fig:compare_rho_aIm}
        \end{subfigure}
        \hfill
        \begin{subfigure}{0.48\textwidth}
            \centering
            \includegraphics[width=\textwidth]{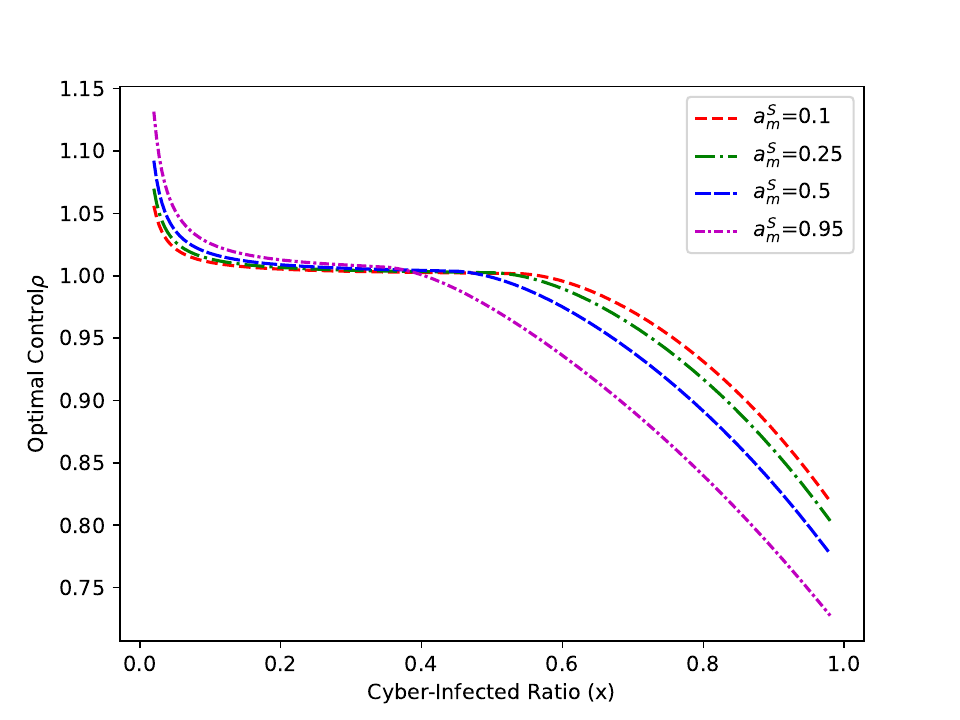}
            \caption{Optimal $\rho^*$ for different $a^S_m$.}
            \label{fig:compare_rho_aSm}
        \end{subfigure}
        \caption{Comparison analysis for $a_m^I$(left) and $a_m^S$(right).}
        \label{fig:comparison-am}
    \end{figure}
It is reasonable to observe that when the marginal costs of proactive management control associated with either cyber-infected nodes $a_m^I$ or susceptible nodes $a_m^S$ increase, the optimal proactive 
management control $\eta^*$ weakens. On the other hand, the optimal reactive mitigation control $\rho^*$ becomes stronger when the marginal costs of management control incurred by cyber-infected nodes ($a_m^I$) increase, especially in low-infected states. This is because increasing $a^I_m$ makes mitigation failure substantially more costly in infected states, thereby amplifying the expected future cost of infection. Since the role of $\rho$ is to suppress infection growth and mitigate the transition from low to high infection levels (see Example~\ref{example4.2}), it becomes optimal to strengthen $\rho$ at early stages in order to prevent the system from entering highly infected states.
When $a^S_m$ increases, strong mitigation 
($\eta \approx 0$) becomes more costly, which leads the controller to relax mitigation efforts. At high infection levels, where mitigation is already less effective due to saturation, the marginal benefit of increasing the recovery control $\rho$ cannot offset its quadratic cost. As a result, the optimal policy reduces $\rho^*$ for large infected states. In contrast, for small infection levels, where the cost of $\rho$ is negligible, $\rho$ may be slightly strengthened as a partial substitute for mitigation.
In addition, the optimal reactive mitigation control is more sensitive to the change of $a_m^I$ when the cyber-infected ratio in the system is low, but it is more sensitive to the change of $a_m^S$ when the system is heavily compromised.
Furthermore,  one can observe that the optimal risk management control $\eta^*(x)$, as a function of the cyber-infected ratio $x$, exhibits a convex shape when $a^I_m$ has not reached at the level of infection cost $a_I$ (see e.g., Figure \ref{fig:compare_eta_aIm}), and the convexity diminishes and eventually turns out to be concavity when $a_m^I$ decreases. Such a phenomenon may be rooted in the interaction (or trade-off) between the marginal costs $a_I$ and $a_m^I$. When $a_I$ is (sufficiently) larger than $a_m^I$, that is, the costs associated with cyber-infected nodes under management control are negligible compared to the baseline management costs of all cyber-infected nodes, then the optimal management control strategy is ``aggressive'' to the cyber-infected ratio, hence results in bending towards zero. But, when $a_m^I$ is sufficiently larger than $a_I$, the optimal management control, as a function of cyber-infected ratio, becomes a concave function. It means that, to minimize the total expected discounted costs, the decision maker might want to reduce the strength of proactive management control aggressively when the cyber-infected ratio is at a moderate level, and the tendency declines when the cyber-infected ratio approaches one (hence, a concave form). One can observe a similar result when changing the value of $a_I$ and keeping $a_m^I$ fixed, see Figures \ref{fig:compare_eta_aI} and \ref{fig:compare_rho_aI}.

\textbf{Comparative statics for $a_I$ and $a_r$.} We change the value of marginal costs ($a_I$) incurred by cyber-infected nodes in the system, or the marginal costs ($a_r$) associated with reactive risk mitigation control, keeping other parameters unchanged in Example \ref{example:4.1}.
\begin{figure}[ht]
        \centering
        \begin{subfigure}{0.48\textwidth}
            \centering
            \includegraphics[width=\textwidth]{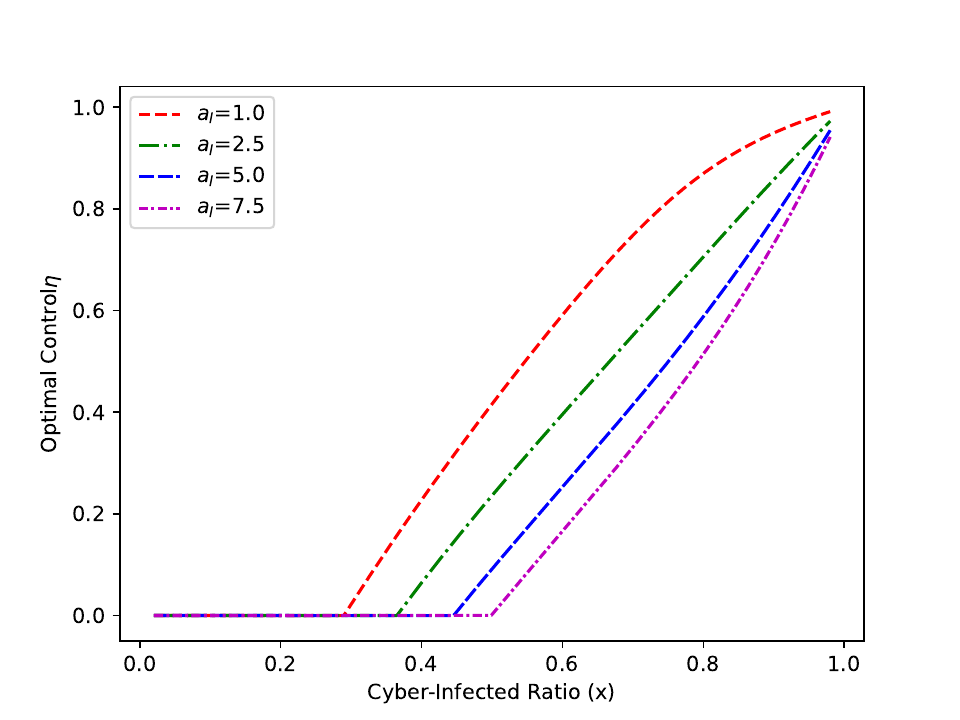}
            \caption{Optimal $\eta^*$ for different $a_I$}
            \label{fig:compare_eta_aI}
        \end{subfigure}
        \hfill
        \begin{subfigure}{0.48\textwidth}
            \centering
            \includegraphics[width=\textwidth]{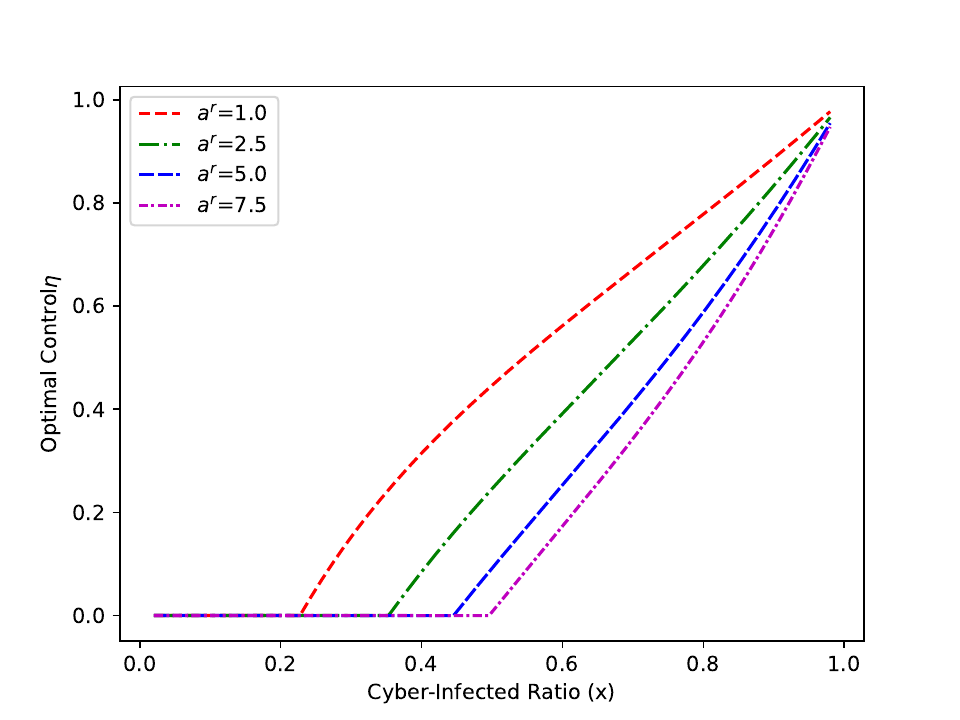}
            \caption{Optimal $\eta^*$ for different $a_r$}
            \label{fig:compare_eta_ar}
        \end{subfigure}
        \begin{subfigure}{0.48\textwidth}
            \centering
            \includegraphics[width=\textwidth]{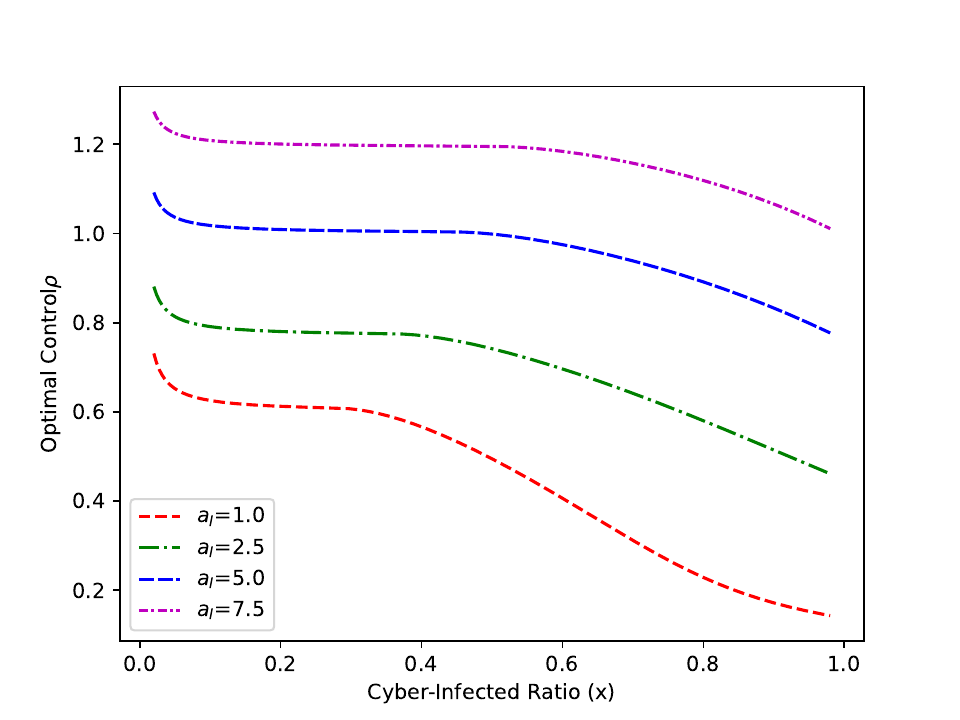}
            \caption{Optimal $\rho^*$ for different $a_I$}
            \label{fig:compare_rho_aI}
        \end{subfigure}
        \hfill
        \begin{subfigure}{0.48\textwidth}
            \centering
            \includegraphics[width=\textwidth]{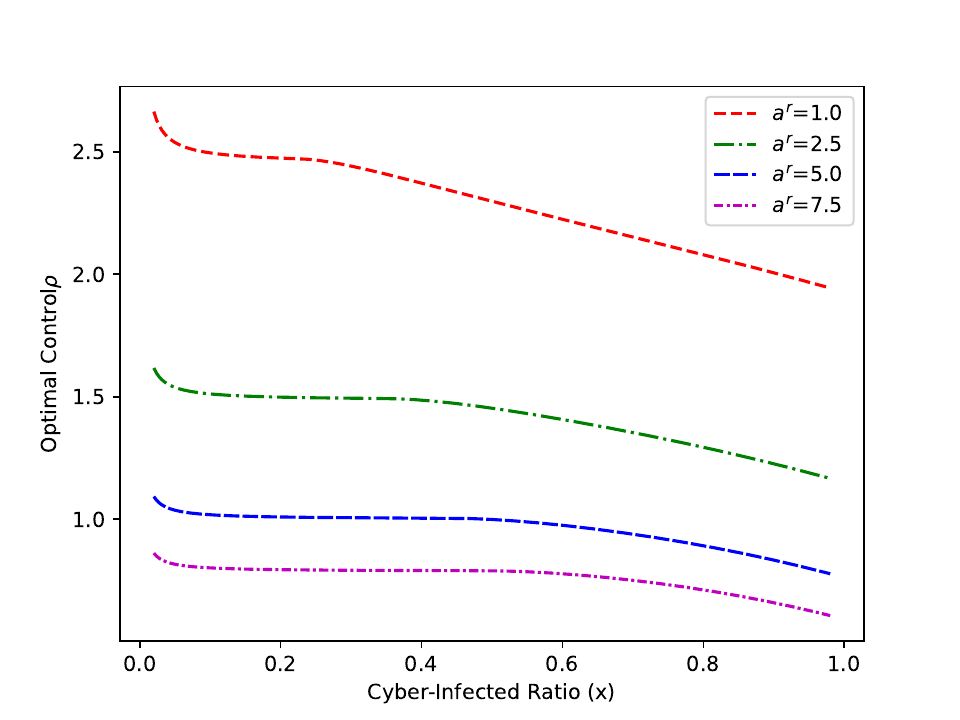}
            \caption{Optimal $\eta^*$ for different $a_r$}
            \label{fig:compare_rho_ar}
        \end{subfigure}
        \caption{Comparison analysis for $a_I$(left) and $a_r$(right).}
        \label{fig:comparison-aIr}
    \end{figure}

When the marginal costs $a_I$ increase, Figures \ref{fig:compare_eta_aI} and \ref{fig:compare_rho_aI} show a similar (but in the opposite way) result of the optimal management control $\eta^*$ as we observed in Figures 
\ref{fig:compare_eta_aIm} and \ref{fig:compare_rho_aIm}, 
where a stronger prevention measure is achieved by expanding the zero-valued plateau (representing maximal prevention) for a larger range of cyber-infected ratios. 
This expansion occurs in an almost uniform, additive manner, where as $a_I$ rises, the switching threshold at which $\eta^*$ departs from zero shifts rightward by roughly the same increment. 
In addition, with a small value in $a_I$ (e.g., $a_I=1$), the optimal proactive management control $\eta^*(x)$, as a function of cyber-infected ratio $x$, exhibits a concave property; 
and the concavity diminishes when $a_I$ increases and eventually exceeds the value of $a_m^I$. 
However, the optimal reactive mitigation $\rho^*$ adjusts in a significantly different way compared with what we obtained in Figure \ref{fig:compare_rho_aIm}, 
especially when the cyber-infected ratio ($x$) is close to one.
To be specific, when the system is heavily compromised, for a large value of $a_I$ (compared to $a_m^I$),
it is optimal to increase the reactive mitigation control to reduce the number of infected nodes so that the expected discounted costs can be reduced significantly.


On the other hand, Figures \ref{fig:compare_eta_ar} and \ref{fig:compare_rho_ar} show that increasing the mitigation cost $a_r$ from $1$ to $2.5$, $5$, and $7.5$ leads to a systematic weakening of the mitigation strategy $\rho^*$, while the management strategy $\eta^*$ is strengthened by expanding its zero-valued plateau (maximal prevention) so that strong prevention is applied earlier and more widely. The movement pattern of $\rho^*$ is similar to that observed in Figures~\ref{fig:compare_eta_aI} and \ref{fig:compare_rho_aI}, in the sense that the adjustment is roughly uniform across the state space, but the direction is opposite: a higher value of $a_I$ lifts $\rho$ proportionally, whereas a higher mitigation cost $a_r$ pushes $\rho^*$ downward. This observation highlights that when the operating costs associated with cyber-infected nodes become more costly, it is optimal to reinforce both risk management and mitigation strategies. However, when mitigation becomes expensive, the optimal strategy reallocates effort towards risk management control with a reduced reliance on expensive reactive mitigation controls.

\section{Conclusion and future outlook}\label{sec:5}
In this paper, we model cyber risk management and mitigation as a stochastic optimal control problem within a stochastic Susceptible-Infected-Susceptible (SIS) epidemic framework. We introduce two dynamic controls to capture real-time risk management and mitigation strategies: 1) a proactive control ($\eta$) that reduces external cyber attacks and internal contagion effects; 2) a reactive control ($\rho$) that speeds up the recovery of infected nodes.
We formulate this as a dual stochastic control problem governed by a general diffusion process. Theoretically, we establish the well-posedness of the controlled SIS model under these dual controls and prove that the associated value function is the unique increasing and Lipschitz-continuous viscosity solution of the Hamilton-Jacobi-Bellman (HJB) equation derived from the control problem.

For numerical solutions, we propose a Policy Improvement Algorithm (PIA) and demonstrate its convergence using Backward Stochastic Differential Equations (BSDEs). Our convergence result extends existing finite-horizon analyses to the infinite-horizon case. Then, we present a benchmark example that illustrates the optimal risk management and mitigation strategy, along with the corresponding value function, for a given model parameter set. We further examine suboptimal performance and sensitivity by: 1) removing one control entirely in the benchmark scenario; 2) introducing small perturbations to each optimal control; 3) conducting a comprehensive comparative statics analysis across all model parameters. The sensitivity and suboptimal control analyses reveal a fundamental asymmetry between the two control strategies, where proactive risk management control demonstrates consistent system-wide benefits and exhibits partial substitutability for reactive mitigation when absent; however, reactive risk mitigation control shows value only during high-infection scenarios and cannot compensate for missing proactive measures. Furthermore, some interesting observations are drawn from the comparative statics, including: the asymmetric impact of external attack frequency versus internal contagion rates on optimal control strategies underscores a possible critical policy implication; effective cyber defense requires prioritizing resource allocation toward external threat management; the optimal control strategy, particularly proactive risk management, exhibits significantly different behavioral patterns depending on the current infection ratio. This variation stems from the interaction between the operational costs of maintaining all infected nodes in the system and the marginal costs of implementing risk management controls on these compromised nodes. 
\par
Finally, we remark that our work lays a foundation for several natural extensions in the field. One direction is incorporating jump processes to model sudden, large-scale cyber attacks or system failures, which could better capture extreme events beyond the diffusion approximation. Another extension involves regime-switching dynamics, where the network environment or external threat landscape changes over time, influencing both infection propagation and optimal control strategies. Further research may also explore multi-layered or networked SIS models with heterogeneous nodes, time delays, and partial observation, enabling more realistic and granular cyber risk management strategies. These extensions could provide a richer theoretical framework and more practical insights for robust cyber defense policies under uncertainty and complex operational conditions. We left them for future research.

\section*{Statements and Declarations}
No competing interests.
\section*{Acknowledgments}
Zhuo Jin and Hailiang Yang were supported by the National Natural Science Foundation of China Grant [Grant 12471452]. Ran Xu was supported by the National Natural Science Foundation of China [Grants 12201506 and 12371468]. 

\begin{appendices}
\section{Proof of Proposition \ref{prop:2.1}}\label{App:A}
(i) To prove the assertion, we first show that the cost functional $J$ defined in \eqref{eq:costf} is non-decreasing in $x$ if $f(x, \cdot, \cdot)$ is non-decreasing in $x$ for each pair of admissible controls. This can be proved using the density argument and It\^o's formula. To be specific, let's firstly argue the Yamada \& Watanabe's comparison principle of It\^o's diffusion (see, e.g., \cite{KaratzasShreve1991}).
   The diffusion term $\sigma(x)$ holds the locally Lipschitz property, and further observe that $|\sigma(x) - \sigma(y)|\leq h(|x - y|)$ by simply taking $h(x):=3\sigma x := ax$. There exists a strictly decreasing sequence $\{a_n\}_{n\in \mathbb N}\downarrow 0$ with $a_0 := 1$ and $\int_{[a_{n - 1}, a_n]} h^{-2}(u)du = n$ for every $n\in \mathbb N$. To see this, one explicitly has $\int_{[a_{n - 1}, a_n]} h^{-2}(u)du = \frac{1}{a^2}\cdot\left(\frac{1}{a_n} - \frac{1}{a_{n+1}}\right) = n$, which gives $a_n = \frac{2}{2 + a^2n(n+1)}$ satisfying such properties as required for each $n\in \mathbb N$. Moreover, we would like to take a nonnegative continuous function $p(x)$ dominated by $2/(nh^{2}(x)0$, such that $1\leq \int_{[a_{n - 1}, a_n]} p(u)\dif u \leq \int_{[a_{n - 1}, a_n]} 2/(nh^{2}(u))\dif u = 2$, so that we get a normalized function $\rho_n(x): (a_{n-1}, a_n)\to \mathbb R$ continuous in $x$ taking the form of 
   \[0\leq \rho_n(x) := \frac{p(x)}{\int_{(a_{n - 1}, a_n)} p(u)\dif u} \cdot I_{(a_{n - 1}, a_n)}(x) \leq p(x) \leq \frac{2}{nh^{2}(x)}. 
   \]
   For example, one can take $p(x) = 1/(nh^2(x))$. Notice also that a property 
    $$\int_0^{|x|} \rho_n(x)\dif  x \leq \int_0^\infty \frac{p(x)}{\int_{(a_{n - 1}, a_n)} p(u)\dif u} \cdot I_{(a_{n - 1}, a_n)}(x) \dif x = \int_{(a_{n - 1}, a_n)} \frac{p(x)}{\int_{(a_{n - 1}, a_n)} p(u)du}\dif x = 1 $$
    holds for some real number $x$. Next, assign the function 
    
    $$\psi_n(x):= \int_0^{|x|}\int_0^y \rho_n(u) \dif u \dif y, \;\text{ on }\;\mathbb R$$ 
    
    which is even and twice continuously differentiable: $$|D_x\psi_n(x)| = \left|\frac{d}{dx}\left(\int_0^{|x|}\int_0^y \rho_n(u)du dy\right)\right| = \int_0^{|x|}\rho_n(u)du \leq 1$$ by fundamental theorem of calculus, and 
    \begin{equation*}
        \begin{split}
            \lim_{n\nearrow \infty} \psi_n(x) &= \int_0^{|x|} \lim_{n\nearrow \infty}\int_{[0, y]} \rho_n(u)\dif u\dif y \\
            &= \int_0^{|x|}\lim_{n\nearrow \infty}\int_{(a_{n - 1}, a_n)\cap (0, y)} \frac{p(u)}{\int_{(a_{n - 1}, a_n)} p(v)dv} \cdot \dif u \dif y\\
            &= \int_0^{|x|}\lim_{n\nearrow \infty}\int_{(a_{n - 1}, a_n)} \frac{p(u)}{\int_{(a_{n - 1}, a_n)} p(v)\dif v} \cdot \dif u \dif y\\
            &= \int_0^{|x|} 1 \cdot \dif y = |x|, 
        \end{split}
    \end{equation*} 
    where observe that $\{\int_0^y \rho_n(u)\dif u\}_{n\in \mathbb N} \uparrow 1$ is at least non-decreasing sequence for each $y\geq 0$, hence allowing us to apply the monotone convergence theorem. Consider two random processes (namely the solution to the corresponding controlled SDE \eqref{eq:SDE-X}) $X^{x, \eta, \rho}_t$ and $X^{y, \eta, \rho}_t$ for different initial data $x\leq y$, each of that has continuous trajectory for every individual $\omega\in \Omega$. Now, apply the It\^o's formula for the random process $\varphi_n(X^{x, \eta, \rho}_s - X^{y, \eta, \rho}_s):= \psi_n(X^{x, \eta, \rho}_s - X^{y, \eta, \rho}_s)\cdot \mathbf{1}_{(0, \infty)}(X^{x, \eta, \rho}_s - X^{y, \eta, \rho}_s)$: 
    \begin{equation}
    \label{eq:A1}
        \begin{split}
            \mathbb{E}\Big[\varphi_n(X^{x, \eta, \rho}_t - X^{y, \eta, \rho}_t)\Big] \leq& \mathbb{E}\Big[\varphi_n(X^{x, \eta, \rho}_t - X^{y, \eta, \rho}_t)\Big] - (x - y)\\
            =& \mathbb{E}\Bigg[\int_0^t D_x\varphi_n(X^{x, \eta, \rho}_s - X^{y, \eta, \rho}_s) \cdot \left[b(X^{x, \eta, \rho}_s) - b(X^{y, \eta, \rho}_s)\right] \dif s \Bigg]\\
            +& \frac{1}{2}\mathbb{E}\Bigg[\int_0^t D_{xx}\varphi_n(X^{x, \eta, \rho}_s - X^{y, \eta, \rho}_s) \cdot \left[\sigma(X^{x, \eta, \rho}_s) - \sigma(X^{y, \eta, \rho}_s)\right]^2 \dif s\Bigg]\\
            \leq & \mathbb{E}\Bigg[\int_0^t D_x\varphi_n(X^{x, \eta, \rho}_s - X^{y, \eta, \rho}_s) \cdot \left[b(X^{x, \eta, \rho}_s) - b(X^{y, \eta, \rho}_s)\right] \dif s\Bigg] + \frac{t}{n}\\
            \leq & K\cdot \mathbb E\Bigg[\int_0^t [X^{x, \eta, \rho}_s - X^{y, \eta, \rho}_s]^+ \dif s\Bigg] + \frac{t}{n},
        \end{split}
    \end{equation}
    where the stochastic integral vanished due to the fact $\mathbb{E}\Big(\int_0^t |\sigma(X_s)|^2\dif s\Big)< \infty$ for each $t\geq 0$; the second inequality holds by the established property of function $h$ and $\psi_n$; and further by the Lipschitz continuity of $b(\cdot)$  to  reach the last line in \eqref{eq:A1}. By sending $n\nearrow \infty$ on both sides of \eqref{eq:A1} with Lebesgue dominated convergence theorem yields
    \begin{align*}
        \lim_{n\nearrow\infty}\mathbb{E}\Big[\varphi_n(X^{x, \eta, \rho}_t - X^{y, \eta, \rho}_t)\Big]&  = \mathbb{E} \Big[\lim_{n\nearrow\infty}\varphi_n(X^{x, \eta, \rho}_t - X^{y, \eta, \rho}_t)\Big] \\
        &= \mathbb{E}\left[X^{x, \eta, \rho}_t - X^{y, \eta, \rho}_t\right]^+ \le K\cdot \mathbb E\Bigg[\int_0^t [X^{x, \eta, \rho}_s - X^{y, \eta, \rho}_s]^+ \dif s\Bigg]
    \end{align*}
    As a consequence, the desired comparison principle follows by Gr\"onwall's inequality, i.e. $$\mathbb E [X^{x, \eta, \rho}_t - X^{y, \eta, \rho}_t]^+ = 0, \;\forall t \geq 0.$$ 
    
    Secondly, let's assume that $\mathbb E\left[f(X^{x, \eta, \rho}_t, \eta_t, \rho_t) - f(X^{y, \eta, \rho}_t, \eta_t, \rho_t)\right]^+ > 0$. However, this immediately turns out that there exists some positive measure set $\Omega_+\subset\Omega$ such that $$\int_{\Omega_+} \left[X^{x, \eta, \rho}_t - X^{y, \eta, \rho}_t\right]^+  \mathbb P(\dif \omega) > 0$$ by the non-decreasing property of cost functional. This apparently contradicts the result we just obtained.
    
    Thirdly, we can thus claim that the comparison principle of two objective functions: $J(x, \eta, \rho)\leq J(y, \eta, \rho)$ if $x\leq y$ under each control $(\eta, \rho)\in \mathcal U_0$. To show this, observe that
    \begin{align*}
        & \mathbb E\left[\int_0^\infty e^{-\delta t}[f(X^{x, \eta, \rho}_t, \eta_t, \rho_t) - f(X^{y, \eta, \rho}_t, \eta_t, \rho_t)]^+ \dif t\right]\\
        &\quad  \leq 2\sup_{z\in\{x, y\}} \mathbb E\left[\int_0^\infty e^{-\delta t}|f(X^{z, \eta, \rho}_t, \eta_t, \rho_t)| \dif t\right]< \infty
    \end{align*}
    by Assumption~\ref{a2}. That, together with the joint measurability of process $f(X^{z, \eta, \rho}_t, \eta_t, \rho_t)$ on product $\sigma$-field $\mathcal B([0, t])  \otimes \mathcal F_t$ for every $t\geq 0$ and $z\in\{x, y\}$, ensures the applicability of Fubini's theorem. Therefore, interchange the order of the two integrals 
    \begin{equation*}
        \begin{split}
            &\mathbb E\left[\int_0^\infty e^{-\delta t}[f(X^{x, \eta, \rho}_t, \eta_t, \rho_t) - f(X^{y, \eta, \rho}_t, \eta_t, \rho_t)]^+ \dif t\right] \\
            & \quad = \int_0^\infty e^{-\delta t}\mathbb E_x[f(X^{x, \eta, \rho}_t, \eta_t, \rho_t) - f(X^{y, \eta, \rho}_t, \eta_t, \rho_t)]^+ \dif t = 0.
        \end{split}
    \end{equation*}

Hence, the non-decreasing property of $V$  is a straightforward argument. To be specific, we consider $0<x\le y<1$, and for any $\varepsilon>0$, let $(\eta^\varepsilon(y),\rho^\varepsilon(y))\in\mathcal{U}_0$ be an $\varepsilon$-optimal control for $X_0=y$ such that $J(y;\eta^\varepsilon(y),\rho^\varepsilon(y)) \le V(y)+\varepsilon$. Then, since $J(x;\cdot,\cdot)$ is non-decreasing in $x$ for any given admissible control, we have
    \[
    V(x)\le J(x;\eta^\varepsilon(y),\rho^\varepsilon(y)) \le J(y;\eta^\varepsilon(y),\rho^\varepsilon(y)) \le V(y)+\varepsilon,
    \]
    and by sending $\varepsilon \to 0$, we complete the proof.

 (ii) We first show a similar result as stated in \eqref{ineq:V}  holds for the cost functional $J$ for any given admissible control.  For fixed $(\eta, \rho)\in \mathcal U_0$,  and take any $x, y\in (0, 1)$, and a time $0<T<\infty$, we have
    \begin{align}\label{eqj}
        &|J(x; \eta, \rho) - J(y; \eta, \rho)| \nonumber\\
        &\leq \mathbb E\left[\int_0^\infty e^{-\delta t}\left|f(X_t^{x,\eta, \rho}, \eta_t, \rho_t) - f(X_t^{y,\eta, \rho}, \eta_t, \rho_t)\right|\dif t\right] \nonumber\\
        &\leq K\mathbb E\left[\int_0^{T} e^{-\delta t}(1 + |X_t^{x,\eta, \rho}|^m + |X_t^{y,\eta, \rho}|^m)|X_t^{x,\eta, \rho} - X_t^{y,\eta, \rho}|\dif t\right] + 2C_f\int_{T}^\infty e^{-\delta t}\dif t \nonumber \\
        &\leq C_{K, m} \left(\mathbb{E}\Bigg[\int_0^{T} e^{-\frac{(m+1)\delta t}{2m}}(1 + |X_t^{x,\eta, \rho}|^{m+1} + |X_t^{y,\eta, \rho}|^{m+1})\dif t\Bigg]\right)^{\frac{m}{m+1}}\nonumber \\
        &\qquad \times \left(\mathbb{E}\Bigg[\int_0^{T} e^{-\frac{(m+1)\delta t}{2}}(|X_t^{x,\eta, \rho} - X_t^{y,\eta, \rho}|^{m+1})\dif t\Bigg]\right)^{\frac{1}{m+1}} + 2C_fe^{-\delta T}. 
    \end{align}
    where $X^{x,\eta,\rho}_{\cdot}$ denotes the controlled process under fixed control $(\eta,\rho)\in\mathcal{U}_0$ with initial value $x$.
    Next, we estimate the following two terms 
    $$I_1:=\mathbb{E}\Bigg[\int_0^{T} e^{-\frac{(m+1)\delta t}{2m}}(1 + |X_t^{x,\eta, \rho}|^{m+1} + |X_t^{y,\eta, \rho}|^{m+1})\dif t\Bigg]$$ 
    and 
    $$I_2:=\mathbb{E}\Bigg[\int_0^{T} e^{-\frac{(m+1)\delta t}{2}}(|X_t^{x,\eta, \rho} - X_t^{y,\eta, \rho}|^{m+1})\dif t\Bigg].$$ 
    Therefore, we can apply boundedness property of the moment of the controlled SDE within $[0, T]$ for any $T<\infty$, there exists a constant $N = N(m, K)$ such that
    \begin{align*}
        I_1 &\leq \mathbb{E}\Bigg[\int_0^{T} e^{-\frac{(m+1)\delta t}{2m}}(1 + \sup_{0\leq t\leq T}|X_t^{x,\eta, \rho}|^{m+1} + \sup_{0\leq t\leq T}|X_t^{y,\eta, \rho}|^{m+1})\dif t\Bigg] \\
        &\leq \left(1 + Ne^{NT}(1 + |x|^{m+1})+ Ne^{NT}(1 + |y|^{m+1})\right) 
    \end{align*}
    and 
    \begin{align*}
        I_2 \leq Ne^{NT} |x - y|^{m+1}, 
    \end{align*}
    which can be seen, for example, in Corollary 2.5.12 in \cite{krylov1980controlled}. Thus, by 
    combining $I_1, I_2$ and equation (\ref{eqj}) and applying Minkowski's inequality, we attain that
    $$|J(x; \eta, \rho) - J(y; \eta, \rho)| \leq C(T, m, \delta)(1 + |x|^{m} + |y|^{m})|x - y|.$$ 
 Then, without loss of generality, we consider  $x \ge  y$ such that $V(x) \ge V(y)$. Take an $\varepsilon$-optimal control $(\eta^\varepsilon, \rho^\varepsilon)\in \mathcal U_0$ such that $V(y) \geq J(y; \eta^\epsilon, \rho^\epsilon) - \varepsilon$. Thus, we obtain the following inequality 
 $$|V(x) - V(y)| = V(x) - V(y) \leq J(x;\eta^\epsilon, \rho^\epsilon) - J(y;\eta^\epsilon, \rho^\epsilon) + \varepsilon \leq  C(1 + |x|^m + |y|^m)|x - y| + \varepsilon.$$
 The desired result will be achieved by sending $\varepsilon$ to zero. Furthermore, $V(x)$ is also continuous uniformly in $x\in (0, 1)$. The Lipschitz and uniform continuity will follow again due to $|x|^m, |y|^m\in (0, 1)$. 

\section{Proof of Proposition \ref{prop:existence}}\label{app:existence}
(i) \textit{Viscosity subsolution}.  We consider any test function $\phi \in C^2((0,1))$ with $\phi \ge V$ such that $\phi(x)=V(x)$ for any given $x\in(0,1)$, we just need to show that
\[
\inf_{(\rho,\eta)\in U}\Big\{(\mathcal{L}^{\eta,\rho} - \delta )\phi(x) + f(x, \eta, \rho)\Big\} \ge 0,
\]
where 
\[
\mathcal{L}^{\eta,\rho}\phi(x) = b(x,\eta,\rho)\phi_x(x) + \frac{1}{2}\sigma^2(x) \phi_{xx}(x)
\]
is the infinitesimal generator of the controlled diffusion given by \eqref{eq:SDE-X} under a pair of fixed control $(\eta,\rho)\in U$. Fix any  $(\eta,\rho)\in U$, consider the control $\eta_t\equiv \eta,\rho_t\equiv \rho$ and the corresponding controlled diffusion $X$. For a sufficiently small $r>0$ such that $(x-r,x+r)\subset (0,1)$, consider the stopping time $\tau:=\inf\{s>0: X_s\notin (x-r,x+r)\}$. Then, by applying It\^o's formula and the dynamic programming principle, we have
\begin{align}\label{eq:Prop3-1}
 0 &\le  \mathbb{E}_x\left[ e^{-\delta\tau} V(X_\tau) +\int_0^\tau e^{-\delta t} f(X_t,\eta_t, \rho_t)\dif t-V(x)\right] \nonumber\\
 & \le  \mathbb{E}_x\left[ e^{-\delta\tau} \phi(X_\tau) +\int_0^\tau e^{-\delta t} f(X_t,\eta_t, \rho_t)\dif t-\phi(x)\right]\nonumber\\
 & = \mathbb{E}_x\left[\int_{0}^{\tau} e^{-\delta t} (\mathcal{L}^{\eta,\rho}-\delta)\phi(X_t)\dif t +\int_0^\tau e^{-\delta t} f(X_t,\eta_t, \rho_t)\dif t\right].
 \end{align}
Now, we assume that $L(x):=(\mathcal{L}^{\eta,\rho} - \delta )\phi(x) + f(x, \eta, \rho)\le - \epsilon<0$ for any given $(\eta,\rho)\in U$. By the continuity of the function $L$, there exists a $h>0$ such that $L(y)< -\epsilon/2$ for all $y\in(x-h,x+h)$. Then, let $\tilde{\tau}:=\inf\{s> 0: X_s\notin(x-h,x+h)\}$, we have
\[
\mathbb{E}_x\Big[\int_{0}^{\tilde{\tau}} e^{-\delta t} L(X_t)\dif t\Big] < -\mathbb{E}_x\Big[\int_{0}^{\tilde{\tau}}e^{-\delta t}\frac{\epsilon}{2}\dif t\Big] = -\mathbb{E}_x\Big[\frac{\epsilon(1-e^{-\delta \tilde{\tau}})}{2\delta}\Big]<0,
\]
which is a contradiction to \eqref{eq:Prop3-1} if we choose $r=h$ above. Hence, by the arbitrariness of the control, we must have
\[
\inf_{(\rho,\eta)\in U}\Big\{(\mathcal{L}^{\eta,\rho} - \delta )\phi(x) + f(x, \eta, \rho)\Big\} \ge 0.
\]
(ii) \textit{Viscosity supersolution}. For any $x\in(0,1)$, let $\varphi\in C^2((0,1))$ be any test function such that $\varphi-V$ attains a maximum value of zero at $x$, we show that 
\[
\inf_{(\rho,\eta)\in U}\left\{(\mathcal{L}^{\eta,\rho}- \delta) \varphi(x) + f(x, \eta, \rho)\right\} \le 0.
\]
We prove this by contradiction. Assume that $\inf_{(\rho,\eta)\in U}\left\{(\mathcal{L}^{\eta,\rho}- \delta) \varphi(x) + f(x, \eta, \rho)\right\} >0$, then by the continuity of the function $(\mathcal{L}^{\eta,\rho}- \delta) \varphi(\cdot)$ and $f(\cdot,\eta,\rho)$ uniformly in the control, there exist $\epsilon>0$ and $\varpi>0$ such that
\[
\inf_{(\rho,\eta)\in U}\left\{(\mathcal{L}^{\eta,\rho}- \delta) \varphi(y) + f(y, \eta, \rho)\right\} >\epsilon,\quad \text{for all } y\in(x-\varpi,x+\varpi)\subset (0,1).
\]
Then, for any fixed control $(\eta_t,\rho_t)_{t\ge0} \equiv (\eta,\rho)\in U$, let $X_t$ be the controlled process with $X_0=x$. We define the first exit time of the interval $(x-\varpi,x+\varpi)$ as $\tau^\varpi:= \inf\{t>0: X_t\notin (x-\varpi,x+\varpi)\}$. Then, by applying It\^o's formula, we have
\begin{align*}
    \varphi(x) &= \mathbb{E}_x\Bigg[ e^{-\delta (t\wedge \tau^\varpi)}\varphi(X_{t\wedge \tau^\varpi}) - \int_{0}^{t\wedge \tau^\varpi}e^{-\delta s}(\mathcal{L}^{\eta,\rho}- \delta)\varphi(X_s)\dif s\Bigg]\\
    & \le \mathbb{E}_x\Bigg[ e^{-\delta (t\wedge \tau^\varpi)}V(X_{t\wedge \tau^\varpi}) +\int_{0}^{t\wedge \tau^\varpi}e^{-\delta s}f(X_t,\eta_t,\rho_t)\dif s\Bigg]\\
    &\quad - \mathbb{E}_x\Bigg[\Big(\int_{0}^{t\wedge \tau^\varpi}e^{-\delta s}(\mathcal{L}^{\eta,\rho}- \delta)\varphi(X_s)\dif s + \int_{0}^{t\wedge \tau^\varpi}e^{-\delta s}f(X_t,\eta_t,\rho_t)\dif s\Big)\Bigg]\\
    & \le \mathbb{E}_x\Bigg[ e^{-\delta (t\wedge \tau^\varpi)}V(X_{t\wedge \tau^\varpi}) +\int_{0}^{t\wedge \tau^\varpi}e^{-\delta s}f(X_t,\eta_t,\rho_t)\dif s\Bigg] - \epsilon\mathbb{E}_x\Bigg[\frac{1-e^{-t\wedge\tau^{\varpi}}}{\delta}\Bigg].
    \end{align*}
    Then, by the arbitrariness of control $(\eta,\rho)\in U$, the dynamic programming principle and the fact that $\tau^\varpi>0$ for all $\omega\in\Omega$, we obtain that $ \varphi(x) < V(x)$, which is a contradiction since $\varphi(x) =V(x)$.
\section{Proof of Proposition \ref{prop:uniqueVS}}\label{app:unique}
      We prove the comparison principle by the usual arguments of contradiction (see e.g. \cite{touzi2012optimal}, \cite{albrecher2020optimalBW}). Assume that
    \[
    0< M:= \sup_{x\in \mathcal{O}}\{v(x) - u(x)\},
    \]
    and $x^* :=\argmax_{x\in\mathcal{O}} \{v(x) - u(x)\}$.  Since both $u$ and $v$ are Lipschitz continuous on $\mathcal{O}\subset (0,1)$, there exists a constant $K>0$ such that
    \[
|u(x)-u(y)| \le K|x-y|,\quad |v(x)-v(y)|\le K|x-y|,\quad \text{for } x,y\in\mathcal{O}.
    \]
    To proceed, let us consider the following set
    \[
    \mathcal{S}:=\Big\{(x,y)\in \mathcal{O}\times\mathcal{O}: x\le y\Big\}. 
    \]
     For all $\alpha >0$, define two auxiliary functions,
    \begin{align*}
    \Psi_\alpha(x,y) &:= \frac{\alpha}{2}(x-y)^2 +\frac{2K}{\alpha^2(y-x)+ \alpha},\\
     H_{\alpha}(x,y)&:= v(x) - u(y) -\Psi_\alpha(x,y).
    \end{align*}
Let $M_\alpha := \max_{(x,y)\in\mathcal{\mathcal{S}}}H_\alpha(x,y)$, and $(x_\alpha,y_\alpha):= \argmax_{(x,y)\in\mathcal{S}}H_\alpha(x,y)$. Then, we directly have $M_\alpha \ge H_\alpha(x^*,x^*) = M-\frac{2K}{\alpha}$, hence
\begin{equation*}
    \liminf_{\alpha\to \infty}M_\alpha\ge M>0. 
\end{equation*}
Note that, by using the increasing and Lipschitz continuous property of $u$ and $v$ and the fact that $u\ge v$ on $\partial\mathcal{O}$, we can show that there exists a sufficiently large $\overline{\alpha}$  such that for all $\alpha\ge \overline{\alpha}$, $(x_\alpha,y_\alpha)$ is an interior point of $\mathcal{O}$.
Then by the following inequality, 
\[
H_\alpha(x_\alpha,x_\alpha) +H_\alpha(y_\alpha,y_\alpha) \le 2H_\alpha(x_\alpha,y_\alpha), 
\]
we arrive at
\[
\alpha|x_\alpha-y_\alpha|^2\le |u(x_\alpha)-u(y_\alpha)| + |v(x_\alpha)-v(y_\alpha)| + 4K(y_\alpha-x_\alpha) \le 6K|x_\alpha-y_\alpha|.
\]
Therefore, by considering a sequence $\alpha_n \to \infty $ as $n\to \infty$ such that $(x_{\alpha_n},y_{\alpha_n}) \to (\hat{x},\hat{y})\in\mathcal{O}$, we have
\[
|x_{\alpha_n}-y_{\alpha_n}|\le \frac{6K}{\alpha_n}
\]
which yields $\hat{x}=\hat{y}$. 

Then, we construct two twice continuously differentiable functions
\begin{align*}
   \varphi(x) &=  \Psi_\alpha(x,y_\alpha) - \Psi_\alpha(x_\alpha,y_\alpha) + v(x_\alpha),\\
   \psi(y)& =\Psi_\alpha(x_\alpha,y_\alpha) - \Psi_\alpha(x_\alpha,y) + u(y_\alpha), 
\end{align*}
which are essentially test functions for the subsolution and supersolution of \eqref{eq:HJB} at the points $x_\alpha$ and $y_\alpha$, respectively.  To simplify our proof here, we first assume that $v(x)$ and $u(y)$ are twice continuously differentiable at $x_\alpha$ and $y_\alpha$, respectively; one can resort to a more general theorem to get a similar result when $u,v$ are not twice continuously differentiable at the point (see e.g. \cite{crandall1992user}).  Since $H_\alpha$ reaches a local maximum at $(x_\alpha,y_\alpha)$ which is an interior point in $\mathcal{O}$, hence we have
\[
\frac{\partial}{\partial x} H_\alpha(x_\alpha,y_\alpha) = \frac{\partial}{\partial y} H_\alpha(x_\alpha,y_\alpha)=0.
\]
Therefore, we arrive at
\[
v_x(x_\alpha) = \frac{\partial}{\partial x} \Psi_\alpha(x_\alpha,y_\alpha) = -\frac{\partial}{\partial y} \Psi_\alpha(x_\alpha,y_\alpha)=u_y(y_\alpha).
\]
In addition, the Hessian matrix is negative semi-definite,
\begin{equation}\label{Hessian}
\begin{pmatrix}
 \frac{\partial^2}{\partial x^2} H_\alpha(x_\alpha,y_\alpha) &\frac{\partial^2}{\partial x \partial y} H_\alpha(x_\alpha,y_\alpha)  \\
 \frac{\partial^2}{\partial y \partial x} H_\alpha(x_\alpha,y_\alpha) & \frac{\partial^2}{\partial y^2} H_\alpha(x_\alpha,y_\alpha)
\end{pmatrix}\le 0
\end{equation}
Let $A = v_{xx}(x_\alpha), B= u_{yy}(y_\alpha)$, and 
\[
M =  \begin{pmatrix}
\frac{\partial^2}{\partial x^2} \Psi_\alpha(x_\alpha,y_\alpha) &\frac{\partial^2}{\partial x \partial y} \Psi_\alpha(x_\alpha,y_\alpha)  \\
 \frac{\partial^2}{\partial y \partial x} \Psi_\alpha(x_\alpha,y_\alpha) & \frac{\partial^2}{\partial y^2} \Psi_\alpha(x_\alpha,y_\alpha),
\end{pmatrix}
\]
we can rewrite \eqref{Hessian}  as
\[
\begin{pmatrix}
A- \frac{\partial^2}{\partial x^2} \Psi_\alpha(x_\alpha,y_\alpha) &-\frac{\partial^2}{\partial x \partial y} \Psi_\alpha(x_\alpha,y_\alpha)  \\
 -\frac{\partial^2}{\partial y \partial x} \Psi_\alpha(x_\alpha,y_\alpha) & -B-\frac{\partial^2}{\partial y^2} \Psi_\alpha(x_\alpha,y_\alpha)
\end{pmatrix} = \begin{pmatrix}
    A & 0\\
    0& -B
\end{pmatrix} - M\le 0.
\]
Then, according to \cite[Theorem 3.2]{crandall1992user}, for any $\varepsilon>0$, there exists $A_\varepsilon, B_\varepsilon\in\mathbb{R}$ such that, 
\begin{equation}\label{eq:ABineq}
 \begin{pmatrix}
     A_\varepsilon & 0\\
     0 & -B_\varepsilon
 \end{pmatrix} \le M + \varepsilon M^2,  
\end{equation}
and $(\varphi_x(x_\alpha),A_\varepsilon)\in D^{2,+}_{x_\alpha}v(x_\alpha)$ and $(\psi_x(x_\alpha),B_\varepsilon)\in D^{2,-}_{y_\alpha}u(y_\alpha)$, hence, we have
\begin{equation}\label{ineq:F}
    F(x_\alpha, v(x_\alpha), \varphi_x(x_\alpha), A_\varepsilon) \ge 0,\quad \text{ and }\quad  F(y_\alpha, u(x_\alpha), \psi_y(y_\alpha), B_\varepsilon) \le 0.
\end{equation}
In addition, by noting that $M =\frac{\partial^2}{\partial x^2} \Psi_\alpha(x_\alpha,y_\alpha)\begin{pmatrix}
    1&-1\\
    -1&1
\end{pmatrix} $, we obtain from \eqref{eq:ABineq} that 
\begin{align*}
   A_\varepsilon -B_\varepsilon &= \begin{pmatrix}
       1&1
   \end{pmatrix} \begin{pmatrix}
       A_\varepsilon & 0\\
       0 & -B_\varepsilon
   \end{pmatrix} \begin{pmatrix}
       1\\
       1
   \end{pmatrix} \le \begin{pmatrix}
       1&1
   \end{pmatrix} (M + \varepsilon M^2) \begin{pmatrix}
       1\\
       1
   \end{pmatrix}\\
   & = \begin{pmatrix}
       1&1
   \end{pmatrix} \Bigg[ \frac{\partial^2}{\partial x^2} \Psi_\alpha(x_\alpha,y_\alpha)\begin{pmatrix}
    1&-1\\
    -1&1
\end{pmatrix}+ \varepsilon \Big(\frac{\partial^2}{\partial x^2} \Psi_\alpha(x_\alpha,y_\alpha)\Big)^2\begin{pmatrix}
    1&-1\\
    -1&1
\end{pmatrix}\Bigg] \begin{pmatrix}
       1\\
       1
   \end{pmatrix}^2\\
   & =\Big(\frac{\partial^2}{\partial x^2}\Psi_\alpha(x_\alpha,y_\alpha)+ 2\varepsilon \Big(\frac{\partial^2}{\partial x^2} \Psi_\alpha(x_\alpha,y_\alpha)\Big)^2 \Big)\begin{pmatrix}
    1&1
\end{pmatrix}\begin{pmatrix}
    1&-1\\
    -1&1
\end{pmatrix}\begin{pmatrix}
    1\\
    1
\end{pmatrix}=0.
\end{align*}
Therefore, we can derive from \eqref{ineq:F},
\begin{align}
\label{ineq:F-F}
0 & \le F(x_\alpha, v(x_\alpha), \varphi_x(x_\alpha), A_\varepsilon) - F(y_\alpha, u(x_\alpha), \psi_y(y_\alpha), B_\varepsilon)\nonumber\\
&\le \inf_{(\rho,\eta)\in U}\Big\{b(x_\alpha,\eta,\rho)\varphi_x(x_\alpha) - b(y_\alpha,\eta,\rho)\psi_y(y_\alpha) + f(x_\alpha,\eta,\rho) - f(y_\alpha,\eta,\rho)\Big\}\nonumber\\
\quad & + \frac{1}{2}\Big(\sigma^2(x_\alpha) A_\varepsilon - \sigma^2(y_\alpha) B_\varepsilon\Big) - \delta (v(x_\alpha)-u(y_\alpha)).
\end{align}
Hence, by \eqref{ineq:F-F}, we can obtain
\begin{align*}
    0<& M \le \liminf_{\alpha\to \infty}M_{\alpha} \le \lim_{n\to \infty}M_{\alpha_n}  = \lim_{n\to \infty} (v(x_{\alpha_n}) - u(y_{\alpha_n}))\\
    & \le \lim_{n\to \infty}\frac{1}{\delta}\Bigg( \inf_{(\rho,\eta)\in U}\Big\{b(x_\alpha,\eta,\rho)\varphi_x(x_\alpha) - b(y_\alpha,\eta,\rho)\psi_y(y_\alpha) + f(x_\alpha,\eta,\rho) - f(y_\alpha,\eta,\rho)\Big\}\\
    &\qquad \qquad  + \frac{1}{2}\Big(\sigma^2(x_\alpha) A_\varepsilon - \sigma^2(y_\alpha) B_\varepsilon\Big)\Bigg) \\
    & = \inf_{(\rho,\eta)\in U}\{b(\hat{x},\eta,\rho)\}\frac{1}{\delta}(\varphi_x(\hat{x}) - \psi_y(\hat{x})) + \frac{\sigma^2(\hat{x})}{2\delta}(A_\varepsilon - B_\varepsilon)\\
    & \le \inf_{(\rho,\eta)\in U}\{b(\hat{x},\eta,\rho)\}\frac{1}{\delta}(\varphi_x(\hat{x}) - \psi_y(\hat{x}))=0,
\end{align*}
which is a contradiction, then we complete the proof. 
\section{Infinite-Horizon BSDE and Convergence of PIA}\label{App:B}
 Fix a filtered probability space $(\Omega, \mathcal{F}_t, \mathbb F:=( \mathcal{F}_t)_{0\leq t\leq\infty}, \mathbb P)$ and let $W=(W_t)_{0\leq t<\infty}$ be a one-dimensional Wiener process on this space. For self-containment, we introduce the following notations: 
\begin{itemize}
    \item[(i)] We first introduce the notations of some spaces that are involved in the later analysis: 
For any $\gamma>0$, let $\mathbb{L}_{\mathcal{F}}^{2,\gamma}(0,\infty;\mathbb{R})$ be the set of all $\mathbb{R}$-valued $\mathbb{F}$-adapted process $\phi_\cdot$ such that 
\[
\mathbb{E}\Bigg[ \int_{0}^{\infty}e^{2\gamma t}|\phi_t|^2\dif t\Bigg] <\infty.
\]
Let $\mathbb{L}_{\mathcal{F}}^{2,\gamma}(\Omega; C[0,\infty];\mathbb{R})$ be the set of all $\mathbb{R}$-valued $\mathbb{F}$-adapted continuous process $\phi_\cdot$ such that
\[
\mathbb{E}\Big[\sup_{t\in[0,\infty]}e^{2\gamma t}|\phi_t|^2\Big] <\infty.
\]
Finally, let $\mathbb{L}_{\mathcal{F}_{\tau}}^{2,\gamma}(\Omega;\mathbb{R})$ be the set of $\mathbb{R}$-valued $\mathcal{F}_\tau$-measurable random variables $\xi$ such that $\mathbb{E}\Big[e^{2\gamma \tau}|\xi|^2\Big]<\infty$, where $\tau$ is any $\mathbb{F}$-stopping time taking values in $[0,\infty]$. 

Then, we define the space
\begin{equation}
\mathbb{B}_{\gamma}[0,\infty] := \left\{\mathbb{L}_{\mathcal{F}}^{2,\gamma}(\Omega; C[0,\infty];\mathbb{R}) \bigcap \mathbb{L}_{\mathcal{F}}^{2,\gamma}(0,\infty;\mathbb{R})\right\} \times \mathbb{L}_{\mathcal{F}}^{2,\gamma}(0,\infty;\mathbb{R})
\end{equation}
with the norm
\begin{align*}
\|(Y_\cdot,Z_\cdot) \|_{\mathbb{B}_{\gamma}[0,\infty]} := \Bigg(\mathbb{E}\Bigg[\sup_{t\in[0,\infty]}e^{2\gamma t}|Y_t|^2   + \int_{0}^{\infty}e^{2\gamma t}|Y_t|^2\dif t + \int_{0}^{\infty}e^{2\gamma t}|Z_t|^2\dif t\Bigg]\Bigg)^{1/2},
\end{align*}
for any $(Y_\cdot,Z_\cdot)\in \mathbb{B}_{\gamma}[0,\infty]$. Obviously, $\mathbb{B}_{\gamma}[0,\infty]$ with the norm is a Banach space.
    \item[(ii)]  For a constant $\gamma>0$, and predictable process $(\phi_t)_{t\ge0}$, we introduce the $\mathbb{L}_\gamma^2$-norm: 
    $$\|\phi\|_{\mathbb{L}^2_\gamma}:= \left(\mathbb E\int_0^\infty e^{2\gamma t}|\phi_t|^2\dif t\right)^{1/2}.$$
    \item[(iii)] For adapted process $\phi_\cdot$ such that $\mathbb E\left[\int_0^\infty |\phi_t|^2 \dif t\right]<\infty$, we define $$(\phi\bullet W)_t:= \int_0^t \phi_s \dif W_s, \quad \text{for}\;  t\geq 0.$$
    \item[(iv)] For any continuous local martingale $M$ let  $(\langle M \rangle_t)_{t\geq 0}$ denote the quadratic variation process and let 
    $$\mathcal E(M)_t:= \exp\left\{M_t - \frac{1}{2} \langle M \rangle_t\right\}, \quad \text{for}\;  t\geq 0$$ denotes the Dol\'eans-Dade exponential of $M_t$ given the initial data $M_0 = 0$. 
\end{itemize}



\subsection{Some preliminary lemmas}
The following lemma is a straightforward result of Girsanov's theorem under the random process $\Big(\big(b^{u}\sigma^{-1}\big)(X_s)\Big)_{s\ge0}$, which is bounded (according to Assumption~\ref{a4}). This result will be helpful in the construction of a contraction mapping when proving Theorem \ref{them:convegencePIA} under a new measure $\widehat{\mathbb{P}}$ on the probability space. 
\begin{lemma}\label{girsanov}
    Let $0\leq t \leq \infty$, $x\in (0, 1)$, and $X:= X^{x, u^*}$ be the unique solution to the SDE~\eqref{eq:SDE-X}, started from time $0$ with initial data $X_0 = x$, and controlled by the optimal control process $u^* =(u^*_t)_{t\ge 0}$ (with a abuse of notation, we use $u = (\eta_{.},\rho_{.})\in\mathcal{U}_0$ to denote the control process as well). Then, $\dif \widehat{\mathbb P} :=\mathcal E\Big(\big(b^{u^*}\sigma^{-1}\big(X)\bullet W\Big)_\infty \dif \mathbb P$ is equivalent to the probability measure $\mathbb P$, and the process 
    $$\widehat W_t:= W_t + \int_0^t b^{u^*_s}(X_s)\sigma^{-1}(X_s)\dif s$$ is a $\widehat {\mathbb P}$-Wiener process. 
\end{lemma}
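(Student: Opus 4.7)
The plan is to verify the hypotheses of the classical Girsanov theorem for the continuous local martingale
\[
M_t := \int_0^t \big(b^{u^*_s}\sigma^{-1}\big)(X_s)\,\dif W_s, \qquad t\ge 0,
\]
using the uniform boundedness $|(b^u\sigma^{-1})(x)|\le K$ from Assumption \ref{a4}(i). First, I would check that $M$ is well defined. The control $u^*=(\eta^*_\cdot,\rho^*_\cdot)$ is progressively measurable, $X=X^{x,u^*}$ is the unique strong solution of \eqref{eq:SDE-X} (so continuous and adapted), and $u\mapsto b^u\sigma^{-1}(x)$ is measurable uniformly in $x$, so the integrand is progressively measurable. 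Boundedness by $K$ gives $\mathbb E\big[\int_0^t |(b^{u^*_s}\sigma^{-1})(X_s)|^2\dif s\big]\le K^2 t<\infty$ for every finite $t$, so $M$ is a well-defined continuous local martingale (in fact a square-integrable martingale on each $[0,T]$).

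Next, I would verify Novikov's condition on each finite horizon $[0,T]$:
\[
\mathbb E\Bigl[\exp\Bigl(\tfrac12\langle M\rangle_T\Bigr)\Bigr]
=\mathbb E\Bigl[\exp\Bigl(\tfrac12\int_0^T|(b^{u^*_s}\sigma^{-1})(X_s)|^2\dif s\Bigr)\Bigr]
\le e^{K^2T/2}<\infty.
\]
By the standard Novikov criterion, $(\mathcal{E}(M)_t)_{t\in[0,T]}$ is a uniformly integrable positive martingale with $\mathbb E[\mathcal E(M)_T]=1$, hence $\dif\widehat{\mathbb P}^T:=\mathcal E(M)_T\dif\mathbb P$ defines a probability measure equivalent to $\mathbb P$ on $\mathcal F_T$, and by Girsanov's theorem the process $\widehat W_t=W_t+\int_0^t (b^{u^*_s}\sigma^{-1})(X_s)\dif s$ is a $\widehat{\mathbb P}^T$-Brownian motion on $[0,T]$. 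The family $\{\widehat{\mathbb P}^T\}_{T\ge 0}$ is consistent because $\mathcal E(M)$ is a martingale, so by a Kolmogorov-type extension (or by working on the canonical Wiener space) it extends to a probability measure $\widehat{\mathbb P}$ on $\mathcal F_\infty:=\sigma(\bigcup_T\mathcal F_T)$ equivalent to $\mathbb P$ with $\dif\widehat{\mathbb P}/\dif\mathbb P\big|_{\mathcal F_T}=\mathcal E(M)_T$, which is consistent with the notational shorthand $\dif\widehat{\mathbb P}=\mathcal E(M)_\infty\dif\mathbb P$.

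Finally, I would conclude that $\widehat W$ is a $\widehat{\mathbb P}$-Wiener process by L\'evy's characterization: it is continuous, adapted, starts at $0$, and has quadratic variation $\langle \widehat W\rangle_t=t$ (this is a pathwise property, unchanged under an equivalent measure). Since $\widehat W$ is a $\widehat{\mathbb P}^T$-martingale for every $T$ by Girsanov, restricting a test function to $\mathcal F_T$ shows $\widehat W$ is a $\widehat{\mathbb P}$-martingale, and L\'evy's theorem concludes that it is a standard Brownian motion under $\widehat{\mathbb P}$.

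The only subtle point is that Novikov's condition is only a priori verified on finite horizons, and $\mathcal E(M)$ is not automatically a uniformly integrable martingale on $[0,\infty)$ (in fact it need not be if $\langle M\rangle_\infty=\infty$). I expect this to be the main obstacle; however, for all subsequent uses in the BSDE analysis one only needs $\widehat{\mathbb P}$ to be well defined on each $\mathcal F_T$ and equivalent to $\mathbb P$ there, together with the $\widehat{\mathbb P}$-Brownian property of $\widehat W$, both of which follow from the consistent finite-horizon construction above.
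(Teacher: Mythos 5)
Your argument is correct and is considerably more detailed than what the paper provides: the paper's entire proof is a citation to Theorem 6.8.8 of \cite{krylov2002introduction}, whereas you reconstruct the standard route --- boundedness of $\big(b^{u^*}\sigma^{-1}\big)(X)$ by $K$ from Assumption \ref{a4}(i), Novikov's criterion on each $[0,T]$, finite-horizon Girsanov, and then a consistency/extension argument to obtain $\widehat{\mathbb P}$ on $\mathcal F_\infty$. The caveat you raise at the end is not merely a technicality but a genuine defect in the lemma as literally stated: when $\langle M\rangle_\infty=\infty$ (which happens whenever the integrand does not vanish eventually), one has $\mathcal E(M)_\infty=0$ $\mathbb P$-a.s., so $\mathcal E(M)_\infty\,\dif\mathbb P$ is not a probability measure and $\widehat{\mathbb P}$ cannot be equivalent to $\mathbb P$ on $\mathcal F_\infty$; the correct reading is local equivalence, i.e.\ $\dif\widehat{\mathbb P}/\dif\mathbb P\big|_{\mathcal F_T}=\mathcal E(M)_T$ for each $T$, exactly as you construct it. You also correctly observe that this weaker conclusion suffices for the downstream BSDE estimates (the change of measure in \eqref{eq:eq:tildeYnt}--\eqref{eq:bsde-tildeYn} and the norms $\widehat{\mathbb L}^2_\gamma$ only require the $\widehat{\mathbb P}$-Brownian property of $\widehat W$ and equivalence on each finite horizon). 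In short, your proof buys a self-contained and honest treatment of the infinite-horizon issue that the paper's one-line citation sweeps under the rug, at the cost of a slightly longer argument; the paper's approach buys brevity by outsourcing the whole matter to Krylov's textbook.
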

\begin{proof}
  See Theorem 6.8.8 of \cite{krylov2002introduction}. 
\end{proof}

\begin{assumption}\label{assumption-F}
For all $(z,Z)\in \mathbb R^2$, $F_s(z, Z)$ for $s\in [0, \infty]$ is progressively measurable. And, there exist constants $L_1, L_2,\gamma>0$ such that $F_\cdot(0,0)\in \mathbb{L}_{\mathcal{F}}^{2,\gamma}(0,\infty;\mathbb{R})$, and for any $z,Z,\bar{z},\bar{Z}\in \mathbb{R}$, $s\in[0,\infty)$,
\begin{equation}
|F_s(z,Z) - F_s(\bar{z},\bar{Z})|\le L_1|z-\bar{z}|+ L_2|Z-\bar{Z}|,\quad \mathbb{P}\text{-}a.s.,
\end{equation}
and 
\begin{equation}
    \gamma> \frac{L^2_1+L_2^2}{2}.
\end{equation}
\end{assumption}
The following lemma states the unique solution of an infinite-horizon BSDE associated with our infinite-horizon stochastic control problem.
\begin{lemma}
  Let Assumption \ref{assumption-F} hold. For any given $z_\cdot \in \mathbb{L}_{\mathcal{F}}^{2,\gamma}(0,\infty;\mathbb{R})$, the following BSDE
  \begin{equation}\label{eq:bsde-uniquelemma}
      Y_t =  \int_{t}^{\infty}F_s(z_s, Z_s)\dif s - \int_{t}^{\infty}Z_s\dif W_s,\quad t\in[0,\infty),
  \end{equation}
  admits a unique solution $(Y_\cdot,Z_\cdot)\in \mathbb{B}_{\gamma}[0,\infty]$. Further, there exists a constant $C>0$ such that if $(\overline{Y}_\cdot, \overline{Z}_{\cdot})\in\mathbb{B}_{\gamma}[0,\infty]$ is the solution of \eqref{eq:bsde-uniquelemma} with $F$  replaced by $\overline{F}$, where $\overline{F}$ satisfies Assumption \ref{assumption-F}, then
  \begin{align}
  \label{ineq:continuityY}
&\|(Y,Z) - (\overline{Y},\overline{Z})\|_{\mathbb{B}_{\gamma}[0,\infty]} \nonumber \\
\le&  C \Bigg(\int_{0}^{\infty}e^{2\gamma s}|F_s(z_s,Z_s)| - \overline{F}_s(z_s,Z_s)|^2 \dif s\Bigg)^{1/2}
  \end{align}
\end{lemma}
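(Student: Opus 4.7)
The plan is to prove the lemma by a Banach fixed-point argument in the weighted Banach space $\mathbb{B}_{\gamma}[0,\infty]$, following the classical Pardoux--Peng strategy adapted to the infinite-horizon setting. With $z_\cdot$ fixed, the generator $(s,Z)\mapsto F_s(z_s,Z)$ is progressively measurable, Lipschitz in $Z$ with constant $L_2$, and satisfies $|F_s(z_s,0)|\le |F_s(0,0)|+L_1|z_s|$, so that $F_\cdot(z_\cdot,0)\in \mathbb{L}_{\mathcal{F}}^{2,\gamma}(0,\infty;\mathbb{R})$ by Assumption~\ref{assumption-F}. Define a solution map $\Phi\colon \mathbb{L}_{\mathcal{F}}^{2,\gamma}(0,\infty;\mathbb{R})\to \mathbb{L}_{\mathcal{F}}^{2,\gamma}(0,\infty;\mathbb{R})$ by applying the Brownian martingale representation theorem to
\[
M_t:=\mathbb{E}\!\left[\int_0^\infty F_s(z_s,\hat Z_s)\,\dif s\,\Big|\,\mathcal{F}_t\right],
\]
which yields a process $Z$; then set $Y_t:=\mathbb{E}[\int_t^\infty F_s(z_s,\hat Z_s)\,\dif s\,|\,\mathcal{F}_t]$ and $\Phi(\hat Z):=Z$. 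A weighted Doob inequality together with the It\^o isometry places the resulting $(Y,Z)$ in $\mathbb{B}_{\gamma}[0,\infty]$, and a fixed point of $\Phi$ is precisely a solution of \eqref{eq:bsde-uniquelemma}.

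The contraction step is the heart of the argument. For two inputs $\hat Z,\hat Z'$ with images $(Y,Z),(Y',Z')$, apply It\^o's formula to $e^{2\gamma s}|Y_s-Y'_s|^2$ on $[t,T]$, localize the stochastic integral by a sequence of stopping times $\tau_n\uparrow\infty$ so that its expectation vanishes, take expectations, and then send $n\to\infty$ and $T\to\infty$ using dominated convergence together with the defining regularity of $\mathbb{B}_{\gamma}[0,\infty]$ (in particular, $\mathbb{E}[e^{2\gamma T}|Y_T-Y'_T|^2]\to 0$). Combined with the $L_2$-Lipschitz bound on $F_s(z_s,\cdot)$ and Young's inequality $2|Y_s-Y'_s|L_2|\hat Z_s-\hat Z'_s|\le \beta|Y_s-Y'_s|^2+L_2^2\beta^{-1}|\hat Z_s-\hat Z'_s|^2$, this yields
\[
(2\gamma-\beta)\,\|e^{\gamma\cdot}(Y-Y')\|_{L^2}^2+\|e^{\gamma\cdot}(Z-Z')\|_{L^2}^2\le \frac{L_2^2}{\beta}\,\|e^{\gamma\cdot}(\hat Z-\hat Z')\|_{L^2}^2.
\]
Choosing $\beta\in(L_2^2,2\gamma)$, which is nonempty because $2\gamma>L_1^2+L_2^2\ge L_2^2$, makes $L_2^2/\beta<1$, so $\Phi$ is a strict contraction on $\mathbb{L}_{\mathcal{F}}^{2,\gamma}$. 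Banach's fixed-point theorem then delivers a unique solution $(Y,Z)\in\mathbb{B}_{\gamma}[0,\infty]$ of \eqref{eq:bsde-uniquelemma}; applying the same It\^o--Young estimate to the difference of any two candidate solutions gives uniqueness in the whole space.

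For the stability bound \eqref{ineq:continuityY}, apply the same It\^o--Young routine to $(Y-\overline Y, Z-\overline Z)$, decomposing the generator increment as
\[
F_s(z_s,Z_s)-\overline F_s(z_s,\overline Z_s)=[F_s(z_s,Z_s)-F_s(z_s,\overline Z_s)]+[F_s(z_s,\overline Z_s)-\overline F_s(z_s,\overline Z_s)],
\]
so that the first bracket is controlled by the $L_2$-Lipschitz property exactly as in the contraction step while the second acts as a forcing term, absorbed by an additional Young's inequality. The slack $2\gamma-\beta>0$ lets the $\|e^{\gamma\cdot}(Z-\overline Z)\|_{L^2}^2$ piece remain on the left-hand side, producing \eqref{ineq:continuityY} with a constant $C=C(\gamma,L_1,L_2)$. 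The main obstacle in all three steps is the infinite-horizon bookkeeping --- vanishing of the terminal term at $T=\infty$ and upgrading the It\^o stochastic integrals from local to true martingales --- handled by the standard combination of exponential weighting and a localization stopping-time sequence, leveraging the regularity built into the definition of $\mathbb{B}_{\gamma}[0,\infty]$.
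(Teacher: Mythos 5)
Your argument is correct. The paper does not prove this lemma itself --- it simply cites Theorem 7.3.6 of \cite{yong1999stochastic} --- and what you have written is precisely the standard contraction-mapping proof underlying that citation: fix $z$, define the solution map via conditional expectation and martingale representation, and run the weighted It\^o--Young estimate on $e^{2\gamma s}|\Delta Y_s|^2$, with the condition $2\gamma>L_1^2+L_2^2\ge L_2^2$ guaranteeing a contraction factor $L_2^2/\beta<1$ for some $\beta\in(L_2^2,2\gamma)$. Two minor points worth tightening: the terminal term $\mathbb{E}[e^{2\gamma T}|\Delta Y_T|^2]$ is only guaranteed to vanish along a subsequence $T_k\to\infty$ (from $\Delta Y\in\mathbb{L}^{2,\gamma}_{\mathcal F}(0,\infty;\mathbb{R})$), which suffices but should be stated; and your decomposition of the generator increment produces the forcing term evaluated at $\overline{Z}_s$ rather than at $Z_s$ as written in \eqref{ineq:continuityY} --- swapping the roles of $F$ and $\overline{F}$ (using that $\overline F$ is also $L_2$-Lipschitz) recovers the exact form stated.
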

\begin{proof}
    See, for example, \cite[Theorem 7.3.6]{yong1999stochastic}.
\end{proof}

Moreover, the comparison principle for infinite-horizon BSDE was also well established, see, e.g., \cite[Theorem 2.2 ]{hamadene1999bsde}. 
\begin{lemma}\label{cmpbsde}
    Let $(Y^i, Z^i)\in\mathbb{B}_{\gamma}[0,\infty]$ be the solution to the following BSDE 
    $$Y^i_t = \xi^i + \int_t^\infty F^i_s(Z_s^i)\dif s - \int_t^\infty Z^i_s \dif W_s,~ t\in [0, \infty],~ i=1,2,$$ 
    where $\xi^i\in\mathbb{L}_{\mathcal{F}_{\infty}}^{2,\gamma}(\Omega;\mathbb{R})$ and $F^i$ satisfy the Assumption \ref{assumption-F} (with $z$ removed) for $i=1,2$, respectively. If further $\xi^1\geq \xi^2~ a.s.$,  and $F^1_s(Z^2_s) \geq F^2_s(Z^2_s)~ a.s.$ for all $s\geq 0$, then $Y^1_t\geq Y^2_t ~ \mathbb{P}\text{-}a.s.$ for all $t\geq 0$. 
\end{lemma}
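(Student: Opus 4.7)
The plan is the classical linearization-plus-Girsanov argument adapted to the infinite horizon. Set $\Delta Y_t := Y^1_t - Y^2_t$ and $\Delta Z_t := Z^1_t - Z^2_t$. Subtracting the two BSDEs and adding-subtracting $F^1_s(Z^2_s)$ inside the driver integral, I would write
\begin{equation*}
\Delta Y_t = (\xi^1-\xi^2) + \int_t^\infty \Big[\beta_s \Delta Z_s + \phi_s\Big]\dif s - \int_t^\infty \Delta Z_s\,\dif W_s,
\end{equation*}
where $\phi_s := F^1_s(Z^2_s) - F^2_s(Z^2_s) \ge 0$ by hypothesis, and
\begin{equation*}
\beta_s := \frac{F^1_s(Z^1_s) - F^1_s(Z^2_s)}{\Delta Z_s}\mathbf{1}_{\{\Delta Z_s \neq 0\}}
\end{equation*}
is $\mathbb{F}$-progressively measurable and satisfies $|\beta_s| \le L_2$ almost surely by the Lipschitz property of $F^1$ in Assumption~\ref{assumption-F}.

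The next step is to remove the linear $\beta_s \Delta Z_s$ term via Girsanov's theorem. Since $\beta$ is uniformly bounded, $\mathcal{E}(\beta \bullet W)$ is a true martingale (the Novikov condition is trivially satisfied), so I would define an equivalent probability measure $\widetilde{\mathbb{P}}$ by $\dif \widetilde{\mathbb{P}}/\dif \mathbb{P} = \mathcal{E}(\beta \bullet W)_\infty$, under which $\widetilde{W}_t := W_t - \int_0^t \beta_s\,\dif s$ is a Brownian motion. The BSDE for $\Delta Y$ then becomes
\begin{equation*}
\Delta Y_t = (\xi^1 - \xi^2) + \int_t^\infty \phi_s\,\dif s - \int_t^\infty \Delta Z_s\,\dif \widetilde{W}_s.
\end{equation*}
If $\int_0^{\cdot} \Delta Z_s\,\dif \widetilde{W}_s$ is a genuine (not merely local) martingale under $\widetilde{\mathbb{P}}$ on $[0,\infty]$, then taking $\widetilde{\mathbb{E}}[\,\cdot\mid\mathcal{F}_t]$ of both sides yields
\begin{equation*}
\Delta Y_t = \widetilde{\mathbb{E}}\!\left[(\xi^1-\xi^2) + \int_t^\infty \phi_s\,\dif s\,\Big|\,\mathcal{F}_t\right] \ge 0,\quad \mathbb{P}\text{-a.s.},
\end{equation*}
which is the desired conclusion.

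The main obstacle is verifying the true-martingale property of $\int_0^{\cdot} \Delta Z_s\,\dif \widetilde{W}_s$ on the \emph{infinite} horizon, together with the well-posedness of the conditional expectation at $t=\infty$. The strategy would be: (i) use the exponential weight $e^{2\gamma t}$ built into the norm of $\mathbb{B}_\gamma[0,\infty]$ to ensure $\Delta Z \in \mathbb{L}^{2,\gamma}_{\mathcal{F}}(0,\infty;\mathbb{R})$; (ii) use boundedness of $\beta$ to show that the Radon--Nikodym density $\mathcal{E}(\beta \bullet W)_\infty$ has exponential moments of every order, so by Cauchy--Schwarz one can transfer the $\mathbb{P}$-integrability of $\int_0^\infty |\Delta Z_s|^2 \dif s$ to $\widetilde{\mathbb{P}}$; (iii) combine these with the uniform bound $|\phi_s| \le |F^1_s(Z^2_s)-F^2_s(Z^2_s)|$ and $\xi^i \in \mathbb{L}^{2,\gamma}_{\mathcal{F}_\infty}(\Omega;\mathbb{R})$ to justify the passage to $t=\infty$. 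A standard localization via stopping times $\tau_n = \inf\{t\ge 0: \int_0^t |\Delta Z_s|^2 \dif s \ge n\}$ followed by dominated convergence would close the argument. With drivers depending only on $Z$ (no $Y$-dependence), the condition $\gamma > L_2^2/2$ from Assumption~\ref{assumption-F} is what guarantees everything fits together; the $L_1$ term is irrelevant here since $F^i$ has no $Y$-argument.
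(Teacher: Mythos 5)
The paper itself does not prove this lemma --- it simply cites \cite[Theorem 2.2]{hamadene1999bsde} --- so your proposal has to stand on its own. Your linearization $F^1_s(Z^1_s)-F^2_s(Z^2_s)=\beta_s\Delta Z_s+\phi_s$ with $|\beta_s|\le L_2$ and $\phi_s\ge 0$ is the right first move and is the classical route. The gap is in the Girsanov step. On an infinite horizon, boundedness of $\beta$ does \emph{not} make $\mathcal{E}(\beta\bullet W)$ a uniformly integrable martingale, and Novikov's condition $\mathbb{E}[\exp(\tfrac12\int_0^\infty\beta_s^2\dif s)]<\infty$ is not ``trivially satisfied'': take $\beta_s\equiv L_2>0$; then $\mathcal{E}(\beta\bullet W)_t=\exp(L_2W_t-L_2^2t/2)\to 0$ almost surely, so $\mathcal{E}(\beta\bullet W)_\infty=0$ and the prescription $\dif\widetilde{\mathbb{P}}=\mathcal{E}(\beta\bullet W)_\infty\,\dif\mathbb{P}$ does not define a probability measure, let alone one equivalent to $\mathbb{P}$ on $\mathcal{F}_\infty$. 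Your remedial step (ii) --- that the density at infinity ``has exponential moments of every order'' --- is aimed at the wrong target: the density is degenerate, not heavy-tailed, and localizing the stochastic integral in time cannot repair a measure change that already fails at $t=\infty$. Note also that your closing remark that only $L_2$ matters is true, but your argument as written never actually invokes the standing hypothesis $\gamma>L_2^2/2$, which is precisely where the infinite horizon bites.

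Two standard ways to close the gap. (a) Keep the linearization but change measure only on $[t,T]$, where Novikov does hold; this gives $\Delta Y_t\ge\widetilde{\mathbb{E}}^T[\Delta Y_T\mid\mathcal{F}_t]$, with $\widetilde{\mathbb{E}}^T$ the expectation under the measure with density $\mathcal{E}(\beta\bullet W)_T$ on $\mathcal{F}_T$. Then estimate $\widetilde{\mathbb{E}}^T[|\Delta Y_T-(\xi^1-\xi^2)|\mid\mathcal{F}_t]$ by Cauchy--Schwarz, playing the growth $\mathbb{E}[(\mathcal{E}_T/\mathcal{E}_t)^2\mid\mathcal{F}_t]\le e^{L_2^2(T-t)}$ against the decay $\mathbb{E}\,|\Delta Y_T-(\xi^1-\xi^2)|^2\lesssim e^{-2\gamma T}$ supplied by $(Y^i,Z^i)\in\mathbb{B}_\gamma[0,\infty]$; the product vanishes as $T\to\infty$ exactly because $\gamma>L_2^2/2$. (b) Avoid Girsanov entirely: apply It\^o's formula to $e^{2\gamma t}\bigl((\Delta Y_t)^-\bigr)^2$, where $(\cdot)^-$ denotes the negative part, absorb the cross term $2(\Delta Y_s)^-\beta_s\Delta Z_s$ with Young's inequality, and use $\gamma>L_2^2/2$ to conclude $\mathbb{E}\bigl[e^{2\gamma t}((\Delta Y_t)^-)^2\bigr]\le 0$. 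Route (b) is the energy-estimate style already used in the paper's Lemma~\ref{lemmaa5} and in the cited reference, and is the cleaner repair here.
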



Now, we are ready to present the following Lemma \ref{lemmaa5}, which plays a key role in the proof of Theorem \ref{them:convegencePIA}.  Lemma \ref{lemmaa5} follows a similar idea to Lemma A.5 of \cite{kerimkulov2020exponential}, see also Lemma 3.2 of \cite{Fuhrman}. 
\begin{lemma}\label{lemmaa5}
    Let $F:\Omega\times [0, \infty)\times \mathbb R\times \mathbb R \to \mathbb R$ be a measurable function that satisfies Assumption \ref{assumption-F}. Fix  $\delta> (L_1+L_2)^2/2$. Let $(Y_\cdot, Z_\cdot)\in \mathbb{B}_{\delta}[0,\infty]$ be the unique solution to \eqref{eq:bsde-uniquelemma} for any given $z_\cdot\in\mathbb{L}_{\mathcal{F}}^{2,\delta}(0,\infty;\mathbb{R})$.
    Moreover, assume that for $z^1_\cdot, z^2_\cdot \in \mathbb{L}_{\mathcal{F}}^{2,\delta}(0,\infty;\mathbb{R}) $ the following condition is satisfied:
    $$|F_t(z^1_t, Z^1_t) - F_t(z^2_t, Z^2_t)|\leq L_1|z^1_t - z^2_t| + L_2|Z^1_t - Z^2_t|,\quad t\in[0,\infty],~\mathbb{P}\textbf{-}a.s.$$
    Then there is $0<\gamma <\delta$ and $q\in (0, 1)$ such that for any $t\ge0$ we have 
    \begin{equation}\label{ineq:LemmaD4}
       \mathbb{E}\Big[e^{2\gamma t}|Y^1_t - Y^2_t|^2\Big] + \|Z^1 - Z^2\|^2_{\mathbb{L}^2_\gamma} \leq q\|z^1 - z^2\|^2_{\mathbb{L}^2_\gamma}, 
    \end{equation}
 where $(Y^i_\cdot,Z^i_\cdot)\in\mathbb{B}_{\delta}[0,\infty]$ is the unique solution to \eqref{eq:bsde-uniquelemma} corresponding to $z_i\in \mathbb{L}_{\mathcal{F}}^{2,\delta}(0,\infty;\mathbb{R}) $ for $i=1,2$, respectively.
 Furthermore, one can choose  $\gamma$ sufficiently large and $\delta-\gamma>0$ sufficiently small such that $q\in(0,1/2)$, and the above results hold as well.
\end{lemma}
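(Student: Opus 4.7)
The plan is to establish \eqref{ineq:LemmaD4} by a weighted energy estimate driven by It\^o's formula. Set $\Delta Y := Y^1 - Y^2$, $\Delta Z := Z^1 - Z^2$, $\Delta z := z^1 - z^2$, and $\Delta F_s := F_s(z_s^1, Z_s^1) - F_s(z_s^2, Z_s^2)$, so that subtracting the two copies of \eqref{eq:bsde-uniquelemma} gives $\dif \Delta Y_s = -\Delta F_s\,\dif s + \Delta Z_s\,\dif W_s$. Applying It\^o's formula to $e^{2\gamma s}|\Delta Y_s|^2$ for some $\gamma \in (0,\delta)$, integrating from $t$ to $T$, taking expectation (with a standard localization killing the stochastic integral), and letting $T\to\infty$ produces the identity
\begin{equation*}
\mathbb{E}\bigl[e^{2\gamma t}|\Delta Y_t|^2\bigr] + \mathbb{E}\int_t^\infty e^{2\gamma s}\bigl[2\gamma|\Delta Y_s|^2 + |\Delta Z_s|^2\bigr]\dif s = 2\,\mathbb{E}\int_t^\infty e^{2\gamma s}\Delta Y_s\,\Delta F_s\,\dif s.
\end{equation*}
The $T\to\infty$ limit is legitimate because $(Y^i,Z^i) \in \mathbb{B}_\delta$ together with $\gamma < \delta$ forces $\mathbb{E}[e^{2\gamma T}|\Delta Y_T|^2] \to 0$ by dominated convergence against the integrable envelope $\sup_T e^{2\delta T}|Y_T^i|^2$.

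Next I will bound the cross term on the right by a two-parameter Young's inequality: $2\Delta Y_s\Delta F_s \leq \mu|\Delta Y_s|^2 + |\Delta F_s|^2/\mu$ with $\mu > 0$, combined with $|\Delta F_s|^2 \leq (1+1/\lambda)L_1^2|\Delta z_s|^2 + (1+\lambda)L_2^2|\Delta Z_s|^2$ for $\lambda > 0$, the latter obtained by expanding the Lipschitz bound $|\Delta F_s|\leq L_1|\Delta z_s|+L_2|\Delta Z_s|$. Substituting and rearranging leads, for every $t \geq 0$, to
\begin{equation*}
\mathbb{E}\bigl[e^{2\gamma t}|\Delta Y_t|^2\bigr] + (2\gamma-\mu)I_Y(t) + \Bigl(1-\tfrac{(1+\lambda)L_2^2}{\mu}\Bigr)I_Z(t) \leq \tfrac{(1+1/\lambda)L_1^2}{\mu}\,\|\Delta z\|_{\mathbb{L}^2_\gamma}^2,
\end{equation*}
where $I_Y(t), I_Z(t)$ denote the $\mathbb{E}\int_t^\infty e^{2\gamma s}|\,\cdot\,|^2\dif s$ tails of $\Delta Y$ and $\Delta Z$. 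Specializing to $t=0$ (so $I_Z(0)=\|\Delta Z\|_{\mathbb{L}^2_\gamma}^2$) delivers a bound on $\|\Delta Z\|_{\mathbb{L}^2_\gamma}^2$; retaining $t>0$ and discarding the nonnegative integral terms on the left isolates $\mathbb{E}[e^{2\gamma t}|\Delta Y_t|^2]$; summing these two pieces yields \eqref{ineq:LemmaD4}.

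For the parameter selection, the admissibility conditions $\mu \leq 2\gamma$ and $(1+\lambda)L_2^2 < \mu$, together with the requirement that the overall contraction coefficient lies below $1$, reduce (via the substitution $\kappa := 1-(1+\lambda)L_2^2/\mu$ and Lagrangian optimization in $\lambda$ and $\kappa$) to the constraint $2\gamma > \mu_{\text{min}} := L_1^2/\kappa^\ast + L_2^2/(1-\kappa^\ast) = (L_1+L_2)^2$, attained at $\kappa^\ast = L_1/(L_1+L_2)$. Hence the hypothesis $\delta > (L_1+L_2)^2/2$ is precisely what makes $\gamma \in ((L_1+L_2)^2/2, \delta)$ a nonempty range, and a small strictly-above-extremal perturbation produces some $q \in (0,1)$. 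For the sharper bound $q < 1/2$, I will push $\mu$ (and hence $\gamma$) large with $\lambda$ kept balanced so that both $(1+1/\lambda)L_1^2/\mu$ and $(1+\lambda)L_2^2/\mu$ shrink to zero---this matches the lemma's ``$\gamma$ sufficiently large and $\delta - \gamma$ sufficiently small'' phrasing.

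The main obstacle is absorbing the pointwise-in-time term $\mathbb{E}[e^{2\gamma t}|\Delta Y_t|^2]$ and the global norm $\|\Delta Z\|_{\mathbb{L}^2_\gamma}^2$ \emph{simultaneously} into a single contraction multiple of $\|\Delta z\|_{\mathbb{L}^2_\gamma}^2$; this forces the two-parameter Young split $(\mu,\lambda)$ rather than a single parameter (a single parameter would only achieve $q = 1$ at the extremal point) and explains the strict gap $\delta > (L_1+L_2)^2/2$ in the hypothesis. A secondary technical point is the $T\to\infty$ limit of the terminal term in the It\^o identity, which relies on the $\mathbb{B}_\delta$-integrability to kill $\mathbb{E}[e^{2\gamma T}|\Delta Y_T|^2]$ and a standard localization argument for the stochastic integral---both are routine for infinite-horizon BSDEs but must be applied with care to preserve both halves of the identity in the limit.
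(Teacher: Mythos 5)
Your proposal follows essentially the same route as the paper's proof: It\^o's formula applied to $e^{2\gamma s}|\Delta Y_s|^2$, Young's inequality on the cross term $2\Delta Y_s\,\Delta F_s$, and a rearrangement into a contraction, with the stochastic integral removed by localization and the terminal term by the $\mathbb{B}_{\delta}[0,\infty]$-integrability; the only substantive difference is bookkeeping, since the paper uses a single Young parameter $\epsilon=2\gamma/(L_1+L_2)$ chosen to cancel the $|\Delta Y_s|^2$ term exactly, obtains $\tilde q=(L_1+L_2)\max\{L_1,L_2\}/(2\gamma)$, and sets $q=\tilde q/(1-\tilde q)$, whereas you optimize a two-parameter split $(\mu,\lambda)$. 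One caution on your threshold claim: because both you and the paper must add the pointwise-in-$t$ bound on $\mathbb{E}[e^{2\gamma t}|\Delta Y_t|^2]$ to the $t=0$ bound on $\|\Delta Z\|^2_{\mathbb{L}^2_\gamma}$, the combined coefficient does \emph{not} fall below $1$ as soon as $2\gamma$ exceeds $(L_1+L_2)^2$ (with your optimal $\lambda=L_1/L_2$ and $\mu=2\gamma$ slightly above $(L_1+L_2)^2$, the sum of the two pieces is approximately $1+L_1/(L_1+L_2)>1$), so the ``small strictly-above-extremal perturbation'' step overstates what the estimate delivers; but this is the same looseness present in the paper's own proof (whose $\tilde q<1/2$ requires $\gamma>(L_1+L_2)\max\{L_1,L_2\}$, which also exceeds $(L_1+L_2)^2/2$ unless $L_1=L_2$), and both arguments are rescued by the freedom to take $\gamma$ --- hence $\delta$ --- sufficiently large, which is all that the ``furthermore'' clause and the application in Theorem \ref{them:convegencePIA} actually use.
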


\begin{proof}
 Denote $\Delta Y:=Y^1-Y^2, \Delta Z:=Z^1 -Z^1,$ and $\Delta z:=z^1-z^2$. Then, apply It\^o's formula to $e^{2\gamma s}|\Delta Y_s|^2$: 
    \begin{equation*}
        \begin{split}
            e^{2\gamma t}|\Delta Y_t|^2 + \int_t^\infty e^{2 \gamma s} |\Delta Z_s|^2 \dif s &= \int_t^\infty e^{2 \gamma s}[2\Delta Y_s \big(F_s(z^1_s, Z^1_s) - F_s(z^2_s, Z^2_s)\big) - 2\gamma |\Delta Y_s|^2 ] \dif s\\
            &- 2\int_t^\infty e^{ 2 \gamma s} |\Delta Y_s||\Delta Z_s| \dif W_s. 
        \end{split}
    \end{equation*}
    By taking the expectation of the equation above, we get 
    \begin{equation}\label{eq:LD4-1}
    \begin{split}
       & \mathbb E\left[e^{2\gamma t}|\Delta Y_t|^2  + \int_t^\infty e^{2\gamma s} |\Delta Z_s|^2 \dif s \right]\\ 
       & \qquad  = \mathbb E\left[\int_t^\infty e^{ 2\gamma s}[2\Delta Y_s (F_s(z^1_s, Z^1_s) - F_s(z^2_s, Z^2_s)) - 2\gamma |\Delta Y_s|^2 ] \dif s \right].   
    \end{split} 
    \end{equation} 
    By the Lipschitz property of the generator and by Young's inequality, we observe that, for any $\epsilon > 0$, 
    \begin{align}\label{ineq:LD4-2}
            &\mathbb E\left[\int_t^\infty e^{2\gamma s}\big(2\Delta Y_s (F_s(z^1_s, Z^1_s) - F_s(z^2_s, Z^2_s)) - 2\gamma |\Delta Y_t|^2 \big) \dif s \right] \nonumber\\
            \leq & \mathbb E \left[\int_t^\infty e^{ 2\gamma s} \big(2\Delta Y_s (L_1|\Delta z_s| + L_2 |\Delta Z_s|) - 2\gamma |\Delta Y_t|^2\big)\dif s\right]\nonumber\\
            \leq & \mathbb E \left[\int_t^\infty e^{2\gamma s} \big((L_1 + L_2)\epsilon\Delta |Y_s|^2 + L_1\epsilon^{-1} |\Delta z_s|^2 + L_2\epsilon^{-1} |\Delta Z_s|^2 - 2\gamma |\Delta Y_t|^2\big)\dif s\right]\nonumber \\
            \leq &  \max\left\{\frac{(L_1 + L_2)L_1}{2\gamma}, \frac{(L_1+ L_2)L_2}{2\gamma}\right\} \cdot \mathbb E \left[\int_t^\infty e^{2\gamma s} \Big( |\Delta z_s|^2 + |\Delta Z_s|^2 \Big)\dif s\right],
    \end{align}
    where the last inequality holds by choosing $\epsilon = 2\gamma/(L_1+L_2)$. Then, take $\gamma$ sufficiently large such that $\delta-\gamma>0$ sufficiently small, we have
    $$\tilde{q}:= \max\left\{\frac{(L_1 + L_2)L_1}{2\gamma}, \frac{(L_1 + L_2)L_2}{2\gamma}\right\} \in \left(0, \frac{1}{2}\right),$$ 
    and subtracting $\tilde q\cdot\mathbb E\big[\int_t^\infty e^{2\gamma s} |\Delta Z_s|^2 \dif s\big]$ on both sides of \eqref{eq:LD4-1} together with \eqref{ineq:LD4-2}, we have
    \begin{equation}\label{ineq:LD4-3}
        \mathbb E\left[e^{ 2\gamma t}|\Delta Y_t|^2 + (1 - \tilde q)\cdot \int_t^\infty e^{2\gamma s} |\Delta Z_s|^2 \dif s \right] \leq \tilde q\cdot  \mathbb E \Big[\int_t^\infty e^{2\gamma s} |\Delta z_s|^2\dif s\Big].
    \end{equation}
  Dividing by $1 - \tilde q$ on both sides of \eqref{ineq:LD4-3}, we have 
    \begin{equation*}
        \begin{split}
            \mathbb E\left[e^{2\gamma t}|\Delta Y_t|^2 + \int_t^\infty e^{2\gamma s} |\Delta Z_s|^2 \dif s \right] &\leq \mathbb E\left[\frac{1}{1 - \tilde q}\cdot e^{2\gamma t}|\Delta Y_t|^2 + \int_t^\infty e^{2\gamma s} |\Delta Z_s|^2 \dif s \right]\\
            &\leq q\cdot \mathbb E \Big[\int_t^\infty e^{2\gamma s}  \left(|\Delta z_s|^2 \right)\dif s\Big], 
        \end{split}
    \end{equation*}
    where the first inequality holds due to $\frac{1}{1 - \tilde q} > 2$, and the second holds by setting $q:= \frac{\tilde q}{1 - \tilde q}\in (0, 1)$. If one further chooses $\gamma$ large such that $\tilde{q}\in(0,1/3)$, hence one get $q\in(0,1/2)$, and complete the proof.
\end{proof}

\begin{lemma}\label{lemmad3}
    The solution $Y_0^x$ to the following BSDE
    \begin{equation}\label{eq:Yx0}
        Y_0^x = \int_0^\infty F_s(Z_s, Z_s) \dif s - \int_0^\infty Z_s \dif\widehat W_s,
    \end{equation}
with $F$ given by \eqref{eq:tilde-Y-Z-F} which satisfies Assumption \ref{assumption-F}, is deterministic and continuous function on $(0, 1)$, and is a viscosity solution to to the HJB equation~\eqref{eq:HJB}. 
\end{lemma}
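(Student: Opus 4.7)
The plan is to set $v(x):=Y_0^x$ and verify the three claims in the order (i) $v$ is deterministic and continuous, (ii) $v$ satisfies a Markov/flow identity propagated by the BSDE, and (iii) $v$ satisfies the viscosity inequalities for \eqref{eq:HJB} by testing against $C^2$ functions and applying It\^o's formula under the equivalent measure $\widehat{\mathbb P}$ constructed in Lemma \ref{girsanov}.

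Determinism of $Y_0^x$ is immediate: the BSDE \eqref{eq:Yx0} has a unique solution in $\mathbb B_\gamma[0,\infty]$ (the contraction set up in Lemma \ref{lemmaa5} gives a fixed point of the map $z_\cdot\mapsto Y_\cdot$), and $Y_0^x$ is $\mathcal F_0$-measurable; under the usual conditions $\mathcal F_0$ is $\mathbb P$-augmented trivial, so $Y_0^x$ is constant $\mathbb P$-a.s. For continuity, I would introduce $(Y^{x},Z^{x})$ and $(Y^{x'},Z^{x'})$ solving \eqref{eq:Yx0} driven by the forward processes with $X_0=x$ and $X_0=x'$, write out the $x$-dependence inside $F_s$ explicitly, and apply the a priori stability estimate \eqref{ineq:continuityY} to the difference. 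The Lipschitz estimates in Assumptions \ref{a1}, \ref{a2}, \ref{a4}, together with the SDE moment bound $\mathbb E^{\widehat{\mathbb P}}[\sup_{t\le T}|X_t^x-X_t^{x'}|^2]\le C_T|x-x'|^2$, balanced against the exponential weight by choosing $\gamma$ sufficiently large (permitted by the last statement of Lemma \ref{lemmaa5}), will yield $|v(x)-v(x')|\le C(1+|x|^m+|x'|^m)|x-x'|$.

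For the viscosity part, the first step is a Markov flow property: for any bounded $\mathbb F$-stopping time $\tau$, $Y_\tau^x=v(X_\tau^x)$ $\widehat{\mathbb P}$-a.s. This is obtained by restarting the BSDE on $[\tau,\infty)$, using the Markov property of $X$ under $\widehat{\mathbb P}$ and uniqueness of the shifted BSDE. Given this, take $\varphi\in C^2((0,1))$ with $v-\varphi$ attaining a local maximum $0$ at $x_0$, let $\tau_r$ be the first exit from $(x_0-r,x_0+r)\subset(0,1)$, and combine the flow property with the BSDE on $[0,t\wedge\tau_r]$:
\begin{equation*}
\varphi(x_0)=v(x_0)=\mathbb E^{\widehat{\mathbb P}}\!\left[\int_0^{t\wedge\tau_r}F_s(Z_s,Z_s)\,\dif s+v(X^{x_0}_{t\wedge\tau_r})\right]\le \mathbb E^{\widehat{\mathbb P}}\!\left[\int_0^{t\wedge\tau_r}F_s(Z_s,Z_s)\,\dif s+\varphi(X^{x_0}_{t\wedge\tau_r})\right].
\end{equation*}
Applying It\^o's formula to $\varphi(X^{x_0}_\cdot)$ under $\widehat{\mathbb P}$ (where $\dif X=\sigma(X)\dif\widehat W$), reverting the Girsanov shift inside $F_s$ to reconstruct the drift $b^u$, dividing by $t$ and sending $t\downarrow 0$ should reduce the inequality to
\[
\inf_{(\eta,\rho)\in U}\Big\{b(x_0,\eta,\rho)\varphi_x(x_0)+\tfrac12\sigma^2(x_0)\varphi_{xx}(x_0)-\delta\varphi(x_0)+f(x_0,\eta,\rho)\Big\}\le 0,
\]
which is the supersolution inequality at $x_0$; the subsolution direction follows symmetrically with a test function lying below $v$ and reversed inequalities.

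The main obstacle will be the rigorous identification of the $Z$-component in the small-$t$ limit of $F_s(Z_s,Z_s)$. Since $v$ is only Lipschitz, I cannot simply write $Z_s=\sigma(X_s)v_x(X_s)$; instead I would exploit the structure of $F$ dictated by Assumption \ref{a3} (the Hamiltonian-form minimizer is a measurable selection in the gradient argument) together with the test function $\varphi$: after substituting $\varphi$ for $v$ in the dynamic-programming identity above, It\^o produces the martingale integrand $\sigma\varphi_x$, and matching this against the $Z$ integrand via uniqueness in Lemma \ref{lemmaa5} justifies replacing $F_s(Z_s,Z_s)$ at the limit point by the Hamiltonian evaluated at $\sigma(x_0)\varphi_x(x_0)$. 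A secondary technicality is that every identity must be justified globally in time: the $\mathbb B_\gamma[0,\infty]$ integrability together with the boundedness $|b^u\sigma^{-1}|\le K$ in Assumption \ref{a4}(i) are what ensure the Dol\'eans exponential $\mathcal E((b^{u^\ast}\sigma^{-1})(X)\bullet W)$ is a true martingale, so Lemma \ref{girsanov} is valid on $[0,\infty]$ and all stochastic integrals appearing above are genuine (not merely local) $\widehat{\mathbb P}$-martingales whose expectations vanish.
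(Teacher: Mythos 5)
Your treatment of determinism and continuity coincides with the paper's: $\mathcal F_0$-measurability gives determinism, and the stability estimate \eqref{ineq:continuityY} gives continuity. The viscosity part, however, has two problems. First, a direction error: under the paper's definition, testing with $\varphi$ such that $v-\varphi$ attains a local \emph{maximum} (so $v\le\varphi$ nearby) is the \emph{subsolution} test and must produce $\inf_{(\eta,\rho)}\{\cdots\}\ge 0$; your displayed chain $\varphi(x_0)=v(x_0)\le\mathbb E[\int F+\varphi(X_{t\wedge\tau_r})]$ indeed leads, after It\^o's formula and division by $t$, to the $\ge 0$ inequality, not to the $\le 0$ supersolution inequality you claim. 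The supersolution inequality requires the opposite test ($v-\varphi$ attaining a local minimum, $v\ge\varphi$), under which the inequality chain reverses. This is repairable by swapping labels, but as written the stated conclusion does not follow from the stated hypotheses.

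Second, and more seriously, the step you yourself flag as the ``main obstacle'' is a genuine gap that your proposed fix does not close. To pass from $\mathbb E\bigl[\tfrac1t\int_0^{t\wedge\tau_r}F_s(Z_s,Z_s)\,\dif s\bigr]$ to the Hamiltonian evaluated at $\sigma(x_0)\varphi_x(x_0)$ you need $Z_s\to\sigma(x_0)\varphi_x(x_0)$ in a suitable sense as $s\downarrow0$. Uniqueness of the BSDE solution (Lemma \ref{lemmaa5}) only says that the pair $(Y,Z)$ solving the value-function BSDE is unique; it does not identify that $Z$ with $\sigma\varphi_x$ for a test function $\varphi$ that merely touches $v$ at one point --- if it did, you would have shown $v$ is differentiable there, which is not available since $v$ is only Lipschitz. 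This is exactly the difficulty the paper's proof is engineered to avoid: it argues by contradiction and invokes the BSDE comparison principle (Lemma \ref{cmpbsde}), whose hypothesis only requires comparing the two generators along the \emph{second} solution's explicitly known $Z$-component $e^{-\delta s}(\sigma D_x\varphi)(X_s)$, obtained from It\^o's formula applied to $e^{-\delta s}\varphi(X_s)$; the unknown $Z$ of the value-function BSDE never needs to be identified, and the strict comparison yields $v(x)>\varphi(x)$, contradicting $v(x)=\varphi(x)$. You should either adopt that comparison-theorem route or supply a genuine argument (e.g.\ a mollification of $v$ or a rigorous a.e.\ representation of $Z$ with a careful limit passage) for the $Z$-identification; as it stands the small-time limit is unjustified.
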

\begin{proof}
We change the measure back to $\mathbb{P}$ and rewrite the BSDE \eqref{eq:Yx0} as 
\begin{equation*}
        Y_0^x = \int_0^\infty  e^{-\delta s }f^{u(X_s,Z_s)}(X_s) \dif s - \int_0^\infty Z_s \dif W_s.
\end{equation*}
By the definition of infinite-horizon BSDE, $Y_t$ is a $\mathcal F_t$-adapted process for $t\geq 0$. Obviously, $Y^x_0$ is measurable w.r.t. trivial $\sigma$-field $\mathcal F_0$, hence deterministic. 
The continuity of $Y^x_0$ for $x\in (0, 1)$ follows the inequality \eqref{ineq:continuityY} combined with the continuity of $F$ and $X^{x,u}_\cdot$ in $x$.
Let's then prove that $Y_0^x$ is the viscosity solution to the HJB equation \eqref{eq:HJB}. We only show the case for viscosity supersolution, and the subsolution property will follow from the same idea. As we have proved $Y^x_0$ is deterministic, consider the discounted process $Y^x_s = e^{-\delta s}v(X_s)$  with  $v(x) = Y_0^x$. Take $\varphi\in C^2(0, 1)$ being a test function such that $v - \varphi$ attains its (local) minimum value of zero at any $x\in (0, 1)$, we shall have the supersolution inequality holds. We prove the assertion by contradiction. Assume that 
\begin{equation*}
    \inf_{u\in U}\Big\{\big(b^{u}D_x\varphi\big)(x) + \frac{1}{2}\big(\sigma^2D_{xx}\varphi\big)(x) -\delta \varphi(x) + f^{u}(x)\Big\}>0.
    \end{equation*}
 Since $\varphi$ is smooth enough and $f$ is continuous, there exists $\epsilon>0$, such that for any $u\in U$, $y\in (x-\epsilon,x+\epsilon)$, we have $v(y)\ge \varphi(y)$ and 
\begin{equation}\label{ineq:proofD5}
    (b^{u}D_x\varphi\big)(y) + \frac{1}{2}\big(\sigma^2D_{xx}\varphi\big)(y) -\delta \varphi(y) + f^{u}(y)>0.
\end{equation}
Let $\tau:=\inf \{s\geq 0: X_s\notin (x-\epsilon,x+\epsilon)\}\wedge t$ for any $t\ge 0$, and consider the pair of stopped processes 
$$(Y^x_{s\wedge\tau},  Z_s\mathbf 1_{[0, \tau]}),\quad s\in[0,t],$$ 
which solves the following finite-horizon BSDE
$$v(x) = \xi_1  + \int_0^\infty  e^{-\delta s}f^{u(X_s,Z_s)}(X_s)\mathbf{1}_{[0,\tau]}\dif s - \int_0^\infty Z_s \mathbf{1}_{[0, \tau]} \dif W_s,$$
 where $\xi_1 := e^{-\delta \tau}v(X_\tau) $. 
On the other hand, consider another pair of stopped processes 
$$\left(e^{-\delta s}\varphi(X_{s\wedge \tau}), e^{-\delta s}(\sigma D_x\varphi ) (X_s)\mathbf 1_{[0, \tau]}\right),\quad s\in[0,t],$$
Apply the It\^o's formula to $e^{-\delta s}\varphi(X_{s})$, then
$$\varphi(x) = \xi_2 - \int_0^\infty \mathbf 1_{[0, \tau]} e^ {-\delta s}\Big(b^{u'} D_x\varphi + \frac{1}{2}\sigma^2 D_{xx} \varphi -\delta \varphi\Big)(X_{s}) \dif s - \int_0^\infty  e^{-\delta s}(\sigma D_x\varphi ) (X_s) \mathbf{1}_{[0, \tau]} \dif W_s, $$
where $\xi_2 := e^{-\delta \tau}\varphi(X_\tau)$. Then, since $v(X_s)\ge \varphi(X_s)$ for all $x\in[0,\tau]$,  we have $\xi_1\ge \xi_2$. In addition,  by \eqref{ineq:proofD5}, one has
\begin{equation*}
  f^{u\big(X_s, (\sigma D_x\varphi)(X_s)\big)}(X_s) > - \Big(b^{u} D_x\varphi + \frac{1}{2}\sigma^2 D_{xx} \varphi -\delta \varphi\Big)(X_{s}),\quad s\in[0,\tau).  
\end{equation*}
Hence, by Theorem~\ref{cmpbsde} (with a further argument of strict comparison principle, see e.g. \cite[Theorem 6.2.2]{pham2009continuous}), we have $v(x) > \varphi(x)$, which is a contradiction. Hence, we have the supersolution inequality.
\end{proof}

\subsection{Proof of Theorem~\ref{them:convegencePIA}}\label{proofthmPIA}

Let $v^n$ be the smooth solution to the Bellman-type ODE~\eqref{pia}, and recall the updated control at $n$th iteration
$$u^n(x) := \argmin_{u\in U}\big\{(b^u(x)D_xv^{n-1}(x) + f^u(x)\big\} = u\big(x, \sigma(x)D_xv^{n-1}(x)\big).$$
Define $X:= X^{x, u^*}$ as the solution to the SDE~\eqref{eq:SDE-X} started from $X_0= x$ and controlled by the optimal control policy $u^*$. 
Apply the It\^o's formula to $v^n(X_s)$, we have
\begin{equation}
\label{eq:dvnXs}
    \begin{split}
         \dif v^n(X_s)&= \left(\frac{1}{2}(\sigma^2D_{xx}v^n)(X_s) + (b^{u^*}D_xv^{n})(X_s) \right)\dif s +(\sigma D_xv^n)(X_s)\dif W_s\\
        &= \left((b^{u^*}D_xv^{n})(X_s) - (b^{u^n}D_xv^{n})(X_s) - f^{u^n}(X_s) + \delta v^n(X_s)\right)\dif s + (\sigma D_xv^n)(X_s)\dif W_s,  
    \end{split}
\end{equation}
where the second equality holds since $v^n$ is a solution to the ODE \eqref{pia}. Let's define for,
\begin{align*}
Y^n_s&:= v^n(X_s)\\
   Z_s^n &:= \big(\sigma D_xv^n\big)(X_s), \\
   F_s(z, Z)&:=  ( b^{u(X_s, z)}\sigma^{-1})(X_s) Z + f^{u(X_s, z)}(X_s).
\end{align*}
Hence, we can write \eqref{eq:dvnXs} as
\begin{equation}\label{eq:bsded-9}
    \dif Y^n_s =\Big((b^{u^*}D_xv^{n})(X_s) - F_s(Z^{n-1}_s,Z^n_{s})+\delta Y^n_s\Big)\dif s + Z^n_s\dif W_s.
\end{equation}
Let $\delta>0$ be a discounting rate, and define
\begin{equation}\label{eq:tilde-Y-Z-F}
   \widetilde{Y}^n_s := e^{-\delta s}Y^n_s,\quad \widetilde{Z}^{\cdot}_s:= e^{-\delta s}Z^{\cdot}_s,\quad \widetilde{F}_s(z,Z):= e^{-\delta s}F_s(e^{\delta s}z, e^{\delta s}Z).
\end{equation}
Then, we can rewrite the above BSDE \eqref{eq:bsded-9} as
\begin{equation*}
    \dif \widetilde{Y}^n_s = \Big(e^{-\delta s}(b^{u^*}D_xv^{n})(X_s) - \widetilde{F}_s(\widetilde{Z}^{n-1}_s,\widetilde{Z}^n_{s}) \Big) \dif s + \widetilde{Z}^n_s \dif W_s.
\end{equation*}
    Moreover, one observes that
    $$\mathbb{P}\Big(\lim_{t\to\infty}\widetilde{Y}^n_t = 0\Big) =  \mathbb P\Big(\lim_{t\to\infty}e^{-\delta t}v^n(X_t) = 0\Big) = 1, $$ 
    since the value function is finite and we have verified that  $X_t \in (0, 1)$ almost surely for any $t\geq 0$ (see, Proposition \ref{prop:2.1}).  Hence, we have
    \begin{equation}\label{eq:eq:tildeYnt}
    \widetilde{Y}^n_t = \int_{t}^{\infty} \Big(\widetilde{F}_s(\widetilde{Z}^{n-1}_s,\widetilde{Z}^n_{s}) - (b^{u^*}\sigma^{-1})(X_s)\widetilde{Z}^n_s\Big)\dif s - \int_{t}^{\infty}\widetilde{Z}^n_s \dif W_s.
    \end{equation}
 By Lemma~\ref{girsanov}, we change the measure to $\widehat{\mathbb{P}}$, and \eqref{eq:eq:tildeYnt} can be rewritten as
\begin{equation}\label{eq:bsde-tildeYn}
    \widetilde{Y}^n_t = \int_{t}^{\infty} \widetilde{F}_s(\widetilde{Z}^{n-1}_s,\widetilde{Z}^n_{s})\dif s - \int_{t}^{\infty}\widetilde{Z}^n_s \dif \widehat{W}_s.
    \end{equation}
Similarly, consider the following BSDE with $v^n$ and $v^{n-1}$ replaced by the value function of our stochastic control problem $v$ \eqref{problem} in \eqref{eq:bsde-tildeYn}:
\begin{equation}\label{eq:Yt}
  \widetilde{Y}_t = \int_t^\infty \widetilde{F}_s(\widetilde{Z}_s, \widetilde{Z}_s)\dif s - \int_t^\infty \widetilde{Z}_s \dif \widehat{W}_s.  
\end{equation}
Note that  for $\gamma<\delta$, we have $(\widetilde{Y}^n_\cdot, \widetilde{Z}^n_\cdot),\, (\widetilde{Y}_\cdot, \widetilde{Z}_\cdot) \in \mathbb{B}_{\gamma}[0,\infty]$, and by Assumption \ref{a4},
{\small
\begin{align*}
 | \widetilde{F}_s(Z_s, Z_s)  - & \widetilde{F}_s(Z_s^{n - 1}, Z_s^n)| \\
  & = \Big\vert( b^{u(X_s, e^{\delta s}Z_s)}\sigma^{-1})(X_s) \cdot Z_s + e^{-\delta s} f^{u(X_s, e^{\delta s}Z_s)}(X_s)\\
  &\qquad -( b^{u(X_s, e^{\delta s}Z^{n-1}_s)}\sigma^{-1})(X_s) \cdot Z^n_s +e^{-\delta s}  f^{u(X_s, e^{\delta s}Z^{n-1}_s)}(X_s) \Big\vert\\
  &\le \Big|\sigma^{-1}(X_s)Z_s\Big| \Big| b^{u(X_s, e^{\delta s}Z_s)}(X_s) - b^{u(X_s, e^{\delta s}Z^{n-1}_s)}(X_s) \Big| + \Big| b^{u(X_s, e^{\delta s}Z^{n-1}_s)}(X_s)\Big|\Big|Z_s-Z^n_{s}\Big|\\
  & \qquad +  e^{-\delta s} \Big|f^{u(X_s, e^{\delta s}Z_s)}(X_s) - f^{u(X_s, e^{\delta s}Z^{n-1}_s)}(X_s)\Big|\\
  & \le  e^{-\delta s}C \theta |e^{\delta s}Z_s -e^{\delta s}Z_s^{n-1}| + K|Z_s - Z_s^n| + e^{-\delta s} \theta|e^{\delta s}Z_s -e^{\delta s}Z_s^{n-1}|\\
  & = \theta (C+1) |Z_s - Z_s^{n - 1}| +  K|Z_s - Z_s^n|,
\end{align*}
}
where we have applied the fact that there is a constant $C>0$, $ |\sigma^{-1}(X_s)Z_s| = e^{-\delta s}|D_xv(X_s)| \le e^{-\delta s}C$ (given the derivative exists) by the property of the value function given in Proposition \ref{pp1}.  Hence, one may choose $\delta$ large enough such that Assumption \ref{assumption-F} holds for $\widetilde{F}$. Then, by applying Lemmas ~\ref{lemmaa5} and  \ref{lemmad3}, there is $0<\gamma<\delta$,
\begin{equation}\label{ineq:Y-Yn}
 \widehat{\mathbb E}\Big[e^{2\gamma t}|Y_t - Y^n_t|^2\Big] + \|Z - Z^n\|^2_{\widehat{\mathbb{L}}^2_\gamma} \leq q\|Z - Z^{n-1}\|^2_{\widehat{\mathbb{L}}^2_\gamma},  
\end{equation}
for any $t\ge 0$, where $\widehat{\mathbb{E}}$  and $\widehat{\mathbb{L}}^2_{\gamma}$ denote the expectation and $\mathbb{L}^2_{\gamma}$-norm under measure $\widehat{\mathbb{P}}$.
 In addition, by noting that the solution $Y^x_0$ and $Y^{x, n}_0$ are the value function $v(x)$ and approximation sequence at $n$th iteration $v^n(x)$ in the Algorithm \ref{PIA} (with a recursive argument, see e.g. \cite{kerimkulov2020exponential}), we have 
$$|v(x) - v^n(x)|^2 \leq q^n\widehat{\mathbb E}\Bigg[\int_0^\infty e^{2(\gamma-\delta) s}\Big|\sigma(X_s)\Big|^2\,\Big|D_xv(X_s) - D_xv^0(X_s)\Big |^2\dif s\Bigg].$$
Hence, we complete the proof.

\end{appendices}

\bibliographystyle{apalike}
\bibliography{Ref}

@article{tran2021optimal,
  title={Optimal control and numerical methods for hybrid stochastic {SIS} models},
  author={Tran, Ky and Yin, George},
  journal={Nonlinear Analysis: Hybrid Systems},
  volume={41},
  pages={101051},
  year={2021},
  publisher={Elsevier}
}

@article{barnett2023epidemic,
  title={Epidemic responses under uncertainty},
  author={Barnett, Michael and Buchak, Greg and Yannelis, Constantine},
  journal={Proceedings of the National Academy of Sciences},
  volume={120},
  number={2},
  pages={e2208111120},
  year={2023},
  publisher={National Acad Sciences}
}

@book{krylov1980controlled,
  title={Controlled diffusion processes},
  author={Krylov, Nicolai V},
  year={1980},
  publisher={Springer}
}

@book{touzi2012optimal,
  title={Optimal stochastic control, stochastic target problems, and backward {SDE}},
  author={Touzi, Nizar},
  volume={29},
  year={2012},
  publisher={Springer Science \& Business Media}
}

@article{kerimkulov2020exponential,
  title={Exponential convergence and stability of Howard's policy improvement algorithm for controlled diffusions},
  author={Kerimkulov, Bekzhan and Siska, David and Szpruch, Lukasz},
  journal={SIAM Journal on Control and Optimization},
  volume={58},
  number={3},
  pages={1314--1340},
  year={2020},
  publisher={SIAM}
}

@article{Fuhrman,
author = {Marco Fuhrman and Gianmario Tessitore},
title = {Infinite horizon backward stochastic differential equations and elliptic equations in {H}ilbert spaces},
volume = {32},
number = {1B},
journal = {The Annals of Probability},
publisher = {Institute of Mathematical Statistics},
pages = {607 -- 660},
year = {2004},
}

@book{pham2009continuous,
  title={Continuous-time stochastic control and optimization with financial applications},
  author={Pham, Huy{\^e}n},
  volume={61},
  year={2009},
  publisher={Springer Science \& Business Media}
}

@article{crandall1983viscosity,
  title={Viscosity solutions of {H}amilton-{J}acobi equations},
  author={Crandall, Michael G and Lions, Pierre-Louis},
  journal={Transactions of the American mathematical society},
  volume={277},
  number={1},
  pages={1--42},
  year={1983}
}

@book{yong1999stochastic,
  title={Stochastic controls: {H}amiltonian systems and {HJB} equations},
  author={Yong, Jiongmin and Zhou, Xun Yu},
  volume={43},
  year={1999},
  publisher={Springer Science \& Business Media}
}

@book{krylov2002introduction,
  author    = {Krylov, N. V.},
  title     = {Introduction to the Theory of Random Processes},
  year      = {2002},
  publisher = {American Mathematical Society},
  address   = {Providence, Rhode Island},
}

@article{hamadene1999bsde,
  author    = {Hamadène, S. and Lepeltier, J.-P. and Wu, Z.},
  title     = {Infinite horizon reflected backward stochastic differential equations and applications in mixed control and game problems},  
  journal   = {Probability and Mathematical Statistics},
  volume    = {19},
  number    = {2},
  pages     = {211--234},
  year      = {1999},
  publisher = {Wrocław University of Science and Technology},
}

@article{crandall1992user,
  title={User’s guide to viscosity solutions of second order partial differential equations},
  author={Crandall, Michael G and Ishii, Hitoshi and Lions, Pierre-Louis},
  journal={Bulletin of the American mathematical society},
  volume={27},
  number={1},
  pages={1--67},
  year={1992}
}

@article{albrecher2020optimalBW,
  title={Optimal ratcheting of dividends in a {B}rownian risk model},
  author={Albrecher, Hansj{\"o}rg and Azcue, Pablo and Muler, Nora},
  journal={SIAM Journal on Financial Mathematics},
  volume={13},
  number={3},
  pages={657--701},
  year={2022},
  publisher={SIAM}
}

@article{gil2014genetic,
  title={A genetic epidemiology approach to cyber-security},
  author={Gil, Santiago and Kott, Alexander and Barab{\'a}si, Albert-L{\'a}szl{\'o}},
  journal={{S}cientific {R}eports},
  volume={4},
  number={1},
  pages={5659},
  year={2014},
  publisher={Nature Publishing Group UK London}
}

@article{liu2016activetrust,
  title={ActiveTrust: Secure and trustable routing in wireless sensor networks},
  author={Liu, Yuxin and Dong, Mianxiong and Ota, Kaoru and Liu, Anfeng},
  journal={IEEE Transactions on Information Forensics and Security},
  volume={11},
  number={9},
  pages={2013--2027},
  year={2016},
  publisher={IEEE}
}

@article{mishra2014dynamic,
  title={Dynamic model of worm propagation in computer network},
  author={Mishra, Bimal Kumar and Pandey, Samir Kumar},
  journal={Applied Mathematical Modelling},
  volume={38},
  number={7-8},
  pages={2173--2179},
  year={2014},
  publisher={Elsevier}
}

@article{fahrenwaldt2018pricing,
  title={Pricing of cyber insurance contracts in a network model},
  author={Fahrenwaldt, Matthias A and Weber, Stefan and Weske, Kerstin},
  journal={ASTIN Bulletin: The Journal of the IAA},
  volume={48},
  number={3},
  pages={1175--1218},
  year={2018},
  publisher={Cambridge University Press}
}

@article{xu2019cybersecurity,
  title={Cybersecurity insurance: {M}odeling and pricing},
  author={Xu, Maochao and Hua, Lei},
  journal={North American Actuarial Journal},
  volume={23},
  number={2},
  pages={220--249},
  year={2019},
  publisher={Taylor \& Francis}
}

@article{antonio2021pricing,
  title={Pricing of cyber insurance premiums using a {M}arkov-based dynamic model with clustering structure},
  author={Antonio, Yeftanus and Indratno, Sapto Wahyu and Saputro, Suhadi Wido},
  journal={PLoS One},
  volume={16},
  number={10},
  pages={e0258867},
  year={2021},
  publisher={Public Library of Science San Francisco, CA USA}
}

@article{he2024modeling,
  title={Modeling and management of cyber risk: a cross-disciplinary review},
  author={He, Rong and Jin, Zhuo and Li, Johnny Siu-Hang},
  journal={Annals of Actuarial Science},
  volume={18},
  number={2},
  pages={270--309},
  year={2024},
  publisher={Cambridge University Press}
}

@article{federico2024optimal,
  title={Optimal vaccination in a {SIRS} epidemic model},
  author={Federico, Salvatore and Ferrari, Giorgio and Torrente, Maria-Laura},
  journal={Economic Theory},
  volume={77},
  number={1},
  pages={49--74},
  year={2024},
  publisher={Springer}
}

@article{chen2025optimality,
  title={Optimality of vaccination for prevalence-constrained {SIRS} epidemics},
  author={Chen, Jiacheng and Feng, Kexin and Freddi, Lorenzo and Goreac, Dan and Li, Juan},
  journal={Applied Mathematics \& Optimization},
  volume={91},
  number={1},
  pages={1--26},
  year={2025},
  publisher={Springer}
}

@incollection{nguyen2004some,
  title={Some martingales associated to reflected {L\'evy} processes},
  author={Nguyen-Ngoc, Laurent and Yor, Marc},
  booktitle={{S\'e}minaire de Probabilit{\'e}s XXXVIII},
  pages={42--69},
  year={2004},
  publisher={Springer}
}

@article{azcue2005optimal,
  title={Optimal reinsurance and dividend distribution policies in the {Cram\'er-Lundberg} model},
  author={Azcue, Pablo and Muler, Nora},
  journal={Mathematical Finance: An International Journal of Mathematics, Statistics and Financial Economics},
  volume={15},
  number={2},
  pages={261--308},
  year={2005},
  publisher={Wiley Online Library}
}

@article{bellman1955functional,
  title={Functional equations in the theory of dynamic programming. V. Positivity and quasi-linearity},
  author={Bellman, Richard},
  journal={Proceedings of the National Academy of Sciences},
  volume={41},
  number={10},
  pages={743--746},
  year={1955}
}

@article{bellman1966dynamic,
  title={Dynamic programming},
  author={Bellman, Richard},
  journal={Science},
  volume={153},
  number={3731},
  pages={34--37},
  year={1966},
  publisher={American Association for the Advancement of Science}
}

@book{howard1960dynamic,
  title={Dynamic programming and {M}arkov processes},
  author={Howard, Ronald A},
  year={1960},
  publisher={John Wiley}
}

@article{puterman1979convergence,
  title={On the convergence of policy iteration in stationary dynamic programming},
  author={Puterman, Martin L and Brumelle, Shelby L},
  journal={Mathematics of Operations Research},
  volume={4},
  number={1},
  pages={60--69},
  year={1979},
  publisher={INFORMS}
}

@article{puterman1981convergence,
  title={On the convergence of policy iteration for controlled diffusions},
  author={Puterman, ML},
  journal={Journal of Optimization Theory and Applications},
  volume={33},
  number={1},
  pages={137--144},
  year={1981},
  publisher={Springer}
}

@article{santos2004convergence,
  title={Convergence properties of policy iteration},
  author={Santos, Manuel S and Rust, John},
  journal={SIAM Journal on Control and Optimization},
  volume={42},
  number={6},
  pages={2094--2115},
  year={2004},
  publisher={SIAM}
}

@book{KaratzasShreve1991,
  author    = {Ioannis Karatzas and Steven E. Shreve},
  title     = {Brownian Motion and Stochastic Calculus},
  series    = {Graduate Texts in Mathematics},
  volume    = {113},
  edition   = {2},
  publisher = {Springer},
  address   = {New York, NY},
  year      = {1991},
  pages     = {XXIII, 470}
}

@article{moore2006inferring,
  title={Inferring internet denial-of-service activity},
  author={Moore, David and Shannon, Colleen and Brown, Douglas J and Voelker, Geoffrey M and Savage, Stefan},
  journal={ACM Transactions on Computer Systems (TOCS)},
  volume={24},
  number={2},
  pages={115--139},
  year={2006},
  publisher={ACM New York, NY, USA}
}

@article{garcia2009anomaly,
  title={Anomaly-based network intrusion detection: Techniques, systems and challenges},
  author={Garcia-Teodoro, Pedro and Diaz-Verdejo, Jesus and Maci{\'a}-Fern{\'a}ndez, Gabriel and V{\'a}zquez, Enrique},
  journal={Computers \& Security},
  volume={28},
  number={1-2},
  pages={18--28},
  year={2009},
  publisher={Elsevier}
}

@article{zhan2015predicting,
  title={Predicting cyber attack rates with extreme values},
  author={Zhan, Zhenxin and Xu, Maochao and Xu, Shouhuai},
  journal={IEEE Transactions on Information Forensics and Security},
  volume={10},
  number={8},
  pages={1666--1677},
  year={2015},
  publisher={IEEE}
}

@article{bakdash2018malware,
  title={Malware in the future? {F}orecasting of analyst detection of cyber events},
  author={Bakdash, Jonathan Z and Hutchinson, Steve and Zaroukian, Erin G and Marusich, Laura R and Thirumuruganathan, Saravanan and Sample, Charmaine and Hoffman, Blaine and Das, Gautam},
  journal={Journal of Cybersecurity},
  volume={4},
  number={1},
  pages={tyy007},
  year={2018},
  publisher={Oxford University Press}
}

@article{jang2014survey,
  title={A survey of emerging threats in cybersecurity},
  author={Jang-Jaccard, Julian and Nepal, Surya},
  journal={Journal of Computer and System Sciences},
  volume={80},
  number={5},
  pages={973--993},
  year={2014},
  publisher={Elsevier}
}

@article{stoneburner2002risk,
  title={Risk management guide for information technology systems},
  author={Stoneburner, Gary and Goguen, Alice and Feringa, Alexis and others},
  journal={Nist special publication},
  volume={800},
  number={30},
  pages={800--30},
  year={2002}
}

@article{gordon2003framework,
  title={A framework for using insurance for cyber-risk management},
  author={Gordon, Lawrence A and Loeb, Martin P and Sohail, Tashfeen},
  journal={Communications of the ACM},
  volume={46},
  number={3},
  pages={81--85},
  year={2003},
  publisher={ACM New York, NY, USA}
}

@article{ougut2011cyber,
  title={Cyber security risk management: Public policy implications of correlated risk, imperfect ability to prove loss, and observability of self-protection},
  author={{\"O}{\u{g}}{\"u}t, Hulisi and Raghunathan, Srinivasan and Menon, Nirup},
  journal={Risk Analysis: An International Journal},
  volume={31},
  number={3},
  pages={497--512},
  year={2011},
  publisher={Wiley Online Library}
}

@article{pate2018cyber,
  title={Cyber risk management for critical infrastructure: a risk analysis model and three case studies},
  author={Pat{\'e}-Cornell, M-Elisabeth and Kuypers, Marshall and Smith, Matthew and Keller, Philip},
  journal={Risk Analysis},
  volume={38},
  number={2},
  pages={226--241},
  year={2018},
  publisher={Wiley Online Library}
}

@article{herath2011copula,
  title={Copula-based actuarial model for pricing cyber-insurance policies},
  author={Herath, Hemantha and Herath, Tejaswini},
  journal={Insurance markets and companies: analyses and actuarial computations},
  volume={2},
  number={1},
  pages={7--20},
  year={2011}
}

@article{mukhopadhyay2013cyber,
  title={Cyber-risk decision models: To insure IT or not?},
  author={Mukhopadhyay, Arunabha and Chatterjee, Samir and Saha, Debashis and Mahanti, Ambuj and Sadhukhan, Samir K},
  journal={Decision Support Systems},
  volume={56},
  pages={11--26},
  year={2013},
  publisher={Elsevier}
}

@article{eling2017data,
  title={Data breaches: Goodness of fit, pricing, and risk measurement},
  author={Eling, Martin and Loperfido, Nicola},
  journal={Insurance: Mathematics and Economics},
  volume={75},
  pages={126--136},
  year={2017},
  publisher={Elsevier}
}

@article{eling2018copula,
  title={Copula approaches for modeling cross-sectional dependence of data breach losses},
  author={Eling, Martin and Jung, Kwangmin},
  journal={Insurance: Mathematics and Economics},
  volume={82},
  pages={167--180},
  year={2018},
  publisher={Elsevier}
}

@article{malavasi2022cyber,
  title={Cyber risk frequency, severity and insurance viability},
  author={Malavasi, Matteo and Peters, Gareth W and Shevchenko, Pavel V and Tr{\"u}ck, Stefan and Jang, Jiwook and Sofronov, Georgy},
  journal={Insurance: Mathematics and Economics},
  volume={106},
  pages={90--114},
  year={2022},
  publisher={Elsevier}
}

@article{dou2020insurance,
  title={An insurance theory based optimal cyber-insurance contract against moral hazard},
  author={Dou, Wanchun and Tang, Wenda and Wu, Xiaotong and Qi, Lianyong and Xu, Xiaolong and Zhang, Xuyun and Hu, Chunhua},
  journal={Information Sciences},
  volume={527},
  pages={576--589},
  year={2020},
  publisher={Elsevier}
}

@article{hillairet2022cyber,
  title={Cyber-contagion model with network structure applied to insurance},
  author={Hillairet, Caroline and Lopez, Olivier and d'Oultremont, Louise and Spoorenberg, Brieuc},
  journal={Insurance: Mathematics and Economics},
  volume={107},
  pages={88--101},
  year={2022},
  publisher={Elsevier}
}

@techreport{cebula2010taxonomy,
  title={A taxonomy of operational cyber security risks},
  author={Cebula, James L and Young, Lisa R},
  year={2010},
    institution={Carnegie Mellon University},
note={ CMU/SEI-2010-TN-028}
}

@inproceedings{bohme2006models,
  title={Models and measures for correlation in cyber-insurance.},
  author={B{\"o}hme, Rainer and Kataria, Gaurav},
  booktitle={Weis},
  volume={2(1)},
  pages={3},
  year={2006}
}

@inproceedings{bohme2010modeling,
  title={Modeling cyber-insurance: towards a unifying framework.},
  author={B{\"o}hme, Rainer and Schwartz, Galina and others},
  booktitle={WEIS},
  year={2010}
}

@article{hillairet2021propagation,
  title={Propagation of cyber incidents in an insurance portfolio: counting processes combined with compartmental epidemiological models},
  author={Hillairet, Caroline and Lopez, Olivier},
  journal={Scandinavian Actuarial Journal},
  volume={2021},
  number={8},
  pages={671--694},
  year={2021},
  publisher={Taylor \& Francis}
}

@article{gonzalez2025tractable,
  title={A tractable model of epidemic control and equilibrium dynamics},
  author={Gonzalez-Eiras, Mart{\'\i}n and Niepelt, Dirk},
  journal={Journal of Economic Dynamics and Control},
  pages={105145},
  year={2025},
  publisher={Elsevier}
}

@book{CoddingtonLevinson1955,
  title     = {Theory of Ordinary Differential Equations},
  author    = {E. A. Coddington and N. Levinson},
  year      = {1955},
  publisher = {McGraw-Hill},
  address   = {New York},
  edition   = {Reprint of the 1955 original},
}

\end{document}